\documentclass[A4paper,leqno]{amsart}

\usepackage{hyperref}
\usepackage{aliascnt}
\usepackage{amsmath, amssymb, latexsym, graphicx,tikz, stmaryrd, ifsym, cleveref, enumerate, multicol} 

\usepackage{stackengine,scalerel}
\usepackage{esvect}
\usepackage[shortlabels]{enumitem}
\usepackage[latin1]{inputenc}
\usepackage{amssymb}
\usepackage{amsfonts}
\usepackage{amscd}
\usepackage{multicol}
\usepackage{mathrsfs}
\usepackage{enumerate}
\usepackage[T1]{fontenc}
\usepackage[english]{babel}
\usepackage{marvosym}
\usepackage{amsthm}
\usepackage{multicol}
\usepackage[all]{xy}
\usepackage{todonotes}
\usepackage{graphicx,tikz}
\usepackage{tikz-cd} 
\usepackage{mathbbol}
\usepackage{verbatim}
\usepackage{xcolor} 

\usepackage{stackengine}

\usepackage{pb-diagram}

\usepackage{bm}

\usepackage{wasysym}
\usepackage{color}

\theoremstyle{plain}
\newtheorem{thm}{Theorem}[section]
\newtheorem*{thm*}{Theorem}

\newaliascnt{lem}{thm}
\newtheorem{lem}[lem]{Lemma}
\aliascntresetthe{lem}

\newaliascnt{claim}{thm}

\aliascntresetthe{claim}

\newaliascnt{prop}{thm}
\newtheorem{prop}[prop]{Proposition}
\aliascntresetthe{prop}
\newtheorem*{prop*}{Proposition}

\newaliascnt{conj}{thm}

\aliascntresetthe{conj}

\newaliascnt{coro}{thm}
\newtheorem{coro}[coro]{Corollary}
\aliascntresetthe{coro}

\newaliascnt{belief}{thm}

\aliascntresetthe{belief}

\newtheorem{thmIntro}{Theorem}

\theoremstyle{remark}

\newaliascnt{term}{thm}

\aliascntresetthe{term}

\newaliascnt{rk}{thm}
\newtheorem{rk}[rk]{Remark}
\aliascntresetthe{rk}

\newaliascnt{df}{thm}
\newtheorem{df}[df]{Definition}
\aliascntresetthe{df}

\newaliascnt{ex}{thm}
\newtheorem{ex}[ex]{Example}
\aliascntresetthe{ex}

\newaliascnt{setup}{thm}

\aliascntresetthe{setup}

\newaliascnt{nota}{thm}

\aliascntresetthe{nota}

\crefname{thm}{theorem}{theorems}
\Crefname{thm}{Theorem}{Theorems}

\crefname{lem}{lemma}{lemmas}
\Crefname{lem}{Lemma}{Lemmas}

\crefname{claim}{claim}{claims}
\Crefname{claim}{Claim}{Claims}

\crefname{prop}{proposition}{propositions}
\Crefname{prop}{Proposition}{Propositions}

\crefname{conj}{conjecture}{conjectures}
\Crefname{conj}{Conjecture}{Conjectures}

\crefname{coro}{corollary}{corollaries}
\Crefname{coro}{Corollary}{Corollaries}

\crefname{belief}{belief}{beliefs}
\Crefname{belief}{Belief}{Beliefs}

\crefname{term}{terminology}{terminologies}
\Crefname{term}{Terminology}{Terminologies}

\crefname{rk}{remark}{remarks}
\Crefname{rk}{Remark}{Remarks}

\crefname{df}{definition}{definitions}
\Crefname{df}{Definition}{Definitions}

\crefname{ex}{example}{examples}
\Crefname{ex}{Example}{Examples}

\crefname{setup}{setup}{setups}
\Crefname{setup}{Setup}{Setups}

\crefname{nota}{notation}{notations}
\Crefname{nota}{Notation}{Notations}

\numberwithin{equation}{section}

\def\1c{\mathrm{1c}}

\def\bbC{\mathbb{C}}

\def\bbF{\mathbb{F}}

\def\bbZ{\mathbb{Z}}

\def\calC{\mathcal{C}}

\def\calP{\mathcal{P}}

\newcommand\restr[2]{{
  \left.\kern-\nulldelimiterspace 
  #1 
  \vphantom{\big|} 
  \right|_{#2} 
  }}

\def\homo{\operatorname{\it \mathscr{H}\kern-.25em om}}
\def\ext{\operatorname{\it \mathscr{E}\kern-.25em xt}}
\def\edo{\operatorname{\it \mathscr{E}\kern-.25em nd}}
\def\der{\operatorname{\it \mathscr{D}\kern-.25em er}}

\def\Hom{\mathrm{Hom}}

\def\End{\mathrm{End}}

\def\Ker{\mathrm{Ker}}
\def\Im{\mathrm{Im}}

\newcommand{\Zd}{\bbZ_d}

\usetikzlibrary{decorations.pathreplacing,backgrounds,decorations.markings,arrows}
\tikzset{wei/.style={draw=red,double=red!40!white,double distance=1.5pt,thin}}
\tikzset{bdot/.style={fill,circle,color=blue,inner sep=3pt,outer sep=0}}
\tikzset{dir/.style={postaction={decorate,decoration={markings,mark=at position .8 with {\arrow[scale=1.3]{>}}}}}}

\newcommand{\C}{\mathbb{C}}

\newcommand{\Aut}{{\rm Aut}}

\makeatletter
\newcommand{\mywedge}{\@ifnextchar^\@extp{\@extp^{\,}}}
\def\@extp^#1{\mathop{\bigwedge\nolimits^{\!\!#1}}}
\makeatother

\newcommand{\sM}{{F_n}} 

\newcommand{\dbar}{{\bar d}}

\begin{document}

\title[Qudit stabilizers and the twisted Kitaev model]{Qudit stabilizers beyond the free case and the twisted Kitaev model}
\author[R. Maksimau]{Ruslan Maksimau}

\begin{abstract}	
We study the stabiliser formalism for qudits of arbitrary dimension $d$. In the free case, we show that the basic theorem of the stabiliser formalism remains valid: if the stabiliser subgroup $H$ is free as a $\bbZ/d\bbZ$-module and contains no non-trivial scalars, then the protected space $V^H$ is naturally identified with the state space of a smaller number of qudits of the same dimension, and the quotient $N(H)/H$ is identified with the Pauli group on a smaller number of qudits.

We then remove the freeness assumption and describe the resulting structure in general. In this case, the protected space is identified with a tensor product of qudit spaces of possibly smaller dimensions, and the quotient $N(H)/H$ is described by a corresponding product of qudit Pauli groups, possibly of smaller dimensions, over a common center. We also characterise the shifted free case, which is exactly the situation in which $N(H)/H$ is again an ordinary qudit Pauli group. Our approach is algebraic and uniform, and applies in particular to the qudit Kitaev model and to its shifted and twisted variants.

\end{abstract}

\maketitle
\vspace{-5mm}

\tableofcontents

\section{Introduction}

In both classical and quantum computation, one has to deal with errors affecting the information being processed. For this, we may need to perform error correction procedures. One of the simplest classical examples is the repetition code, where each bit $b\in\{0,1\}$ is encoded as three identical bits $\overline{bbb}\in \{0,1\}^3$. In case of an error affecting one of the bits, the resulting triple is no longer constant, and this allows us to detect and correct the error. Such an error correction code may be described by a map from $\{0,1\}^n$ to $\{0,1\}^{3n}$, meaning that we encode $n$ logical bits using $3n$ physical bits. More generally, we may want to encode $k$ logical bits into $n$ physical bits, which corresponds to a map $\{0,1\}^k\hookrightarrow\{0,1\}^{n}$, together with a procedure for detecting and correcting errors.

For quantum computation we use qubits instead of bits. A state of a qubit is not an element of $\{0,1\}$, but an element of a 2-dimensional hermitian vector space with an orthonormal basis $|0\rangle$, $|1\rangle$, we will call this space just $\bbC^2$.
For readers not familiar with the basics of quantum mechanics, we refer to the author's notes for a talk at a working group \cite{maksimau2025spin}, which provide an elementary introduction to qubits and related notions in a language aimed at algebraists.

 Now, if we want to encode $k$ logical qubits with $n$ physical qubits, this corresponds to finding a linear map $(\bbC^2)^{\otimes k}\hookrightarrow (\bbC^2)^{\otimes n}$. Gottesman \cite{gottesman1997stabilizer} proposed an approach for constructing such linear maps. Let us recall briefly Gottesman's stabiliser formalism.

Let $\calP_n$ be the $n$-qubit Pauli group acting on the $n$-qubit vector space $V=(\bbC^2)^{\otimes n}$. For a subgroup $H\subset \calP_n$, set $V^H=\{v\in V|\, \forall h\in H,\, h(v)=v\}$. Denote by $N(H)$ the normaliser of $H$ in $\calP_n$.
\begin{prop*}[Gottesman]
	Let $H\subset \calP_n$ be an arbitrary abelian subgroup not containing nontrivial scalars and let $2^k$ be the cardinality of $H$. Then there is a unitary isomorphism $V^H\simeq (\bbC^2)^{\otimes n-k}$ and an isomorphism of groups $N(H)/H\simeq \calP_{n-k}$ such that the action of $N(H)/H$ on $V^H$ is identified with the action of $\calP_{n-k}$ on $(\bbC^2)^{\otimes n-k}$.
\end{prop*}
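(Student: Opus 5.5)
The plan is to pass to the symplectic vector space $\bbF_2^{2n}$ and reduce to a normal form by a Clifford change of coordinates. Consider the surjection $\pi\colon\calP_n\to\bbF_2^{2n}$ with kernel the scalars $\{\pm1,\pm i\}$. Commutation in $\calP_n$ is governed by the standard symplectic form $\omega$: $gh=(-1)^{\omega(\pi g,\pi h)}hg$.

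Since $H$ is abelian and contains no nontrivial scalars, $\pi$ restricts to an injection of $H$ onto an isotropic subspace $W=\pi(H)$ of dimension $k$. Moreover, $-1\notin H$ forces every element of $H$ to square to $1$. Hence $H$ is elementary abelian of order $2^k$, and each of its elements is $\pm$ a Hermitian Pauli operator. The normaliser is $N(H)=\pi^{-1}(W^{\perp})$: an element normalises $H$ iff it commutes with every $h\in H$ up to a sign, and the sign must be $+1$ because $-h\notin H$ when $h\in H$.

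Next I use a symplectic Witt basis. Choose a basis $w_1,\dots,w_k$ of $W$ and extend it to a symplectic basis $w_1,\dots,w_n,u_1,\dots,u_n$ of $\bbF_2^{2n}$. Every element of $Sp_{2n}(\bbF_2)$ is induced by conjugation by a Clifford unitary $U$, which is the standard fact that the Clifford group surjects onto $Sp_{2n}(\bbF_2)$. So after conjugating by a unitary we may assume $\pi(H)$ is spanned by the classes of $Z_1,\dots,Z_k$. The generators of $H$ are then $\epsilon_j Z_j$ with $\epsilon_j=\pm1$. Conjugating further by the Pauli operators $X_j$ for those $j$ with $\epsilon_j=-1$ makes all signs $+1$. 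Since unitary conjugation transports $V^H$, $N(H)$ and the action compatibly, it suffices to treat $H=\langle Z_1,\dots,Z_k\rangle$.

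In this normal form everything is explicit. $V^H=|0\rangle^{\otimes k}\otimes(\bbC^2)^{\otimes n-k}$, which is unitarily isomorphic to $(\bbC^2)^{\otimes n-k}$. Also $W^\perp$ is spanned by $W$ together with the $X,Z$ coordinates on the last $n-k$ qubits. Therefore $N(H)=H\cdot(\{\pm1,\pm i\}\times\calP'_{n-k})$, where $\calP'_{n-k}$ is the Pauli group on the last $n-k$ tensor factors. The natural map $\calP_{n-k}\to N(H)/H$ is surjective by this description. It is injective because $H$ meets $\{\pm1,\pm i\}\cdot\calP'_{n-k}$ trivially: the $Z_j$ act on the first $k$ factors only. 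Compatibility of the actions is immediate, since $H$ acts trivially on $V^H$ and $\calP'_{n-k}$ acts on the second tensor factor.

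The main obstacle is the reduction step. It needs the surjectivity of the Clifford group onto $Sp_{2n}(\bbF_2)$, or at least a Clifford element carrying a given isotropic subspace to $\langle Z_1,\dots,Z_k\rangle$. I would prove this by checking that transvections are realized by explicit Clifford gates (for instance $\exp(i\pi P/4)$ for a Pauli $P$), and using the fact that transvections generate $Sp_{2n}(\bbF_2)$. A lighter alternative avoids Clifford gates altogether. Choose Paulis $\bar X_j,\bar Z_j$, for $j\le n$, whose images form the symplectic basis above with $\bar Z_j=h_j$ for $j\le k$. These satisfy the Pauli relations, so by irreducibility of the Pauli group representation (Stone--von Neumann for $\bbF_2^{2n}$) there is a unitary intertwining them with the standard $X_j,Z_j$. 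This gives the reduction without needing the Clifford facts.
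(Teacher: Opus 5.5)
Your proof is correct and is essentially the paper's argument, namely Theorem~\ref{thm:main-free-dstab} specialised to $d=2$, where freeness of $H$ is automatic: extend a basis of $W$ to a symplectic basis, use surjectivity of the Clifford group onto the symplectic group (Proposition~\ref{prop:surj}) and then conjugation by $X_j$'s to reach $\langle Z_1,\dots,Z_k\rangle$, after which $V^H$ and $N(H)$ are read off directly. Your Stone--von Neumann alternative is exactly the paper's Heisenberg-group route (Remark~\ref{rk:proof-surj-Cl-Sp}), with one small proviso: for the Pauli relations to hold, the lifts $\bar X_j,\bar Z_j$ with $j>k$ must be chosen of order $2$ (rescaling by $i$ if needed).
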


The proposition above allows one to construct interesting linear maps of the form $(\bbC^2)^{\otimes n-k}\hookrightarrow (\bbC^2)^{\otimes n}$ and to obtain all interesting operators on $(\bbC^2)^{\otimes n-k}$ as restrictions of operators on $V=(\bbC^2)^{\otimes n}$ preserving $V^H$.

Now, we would like to generalise this result when we move from qubits ($\bbC^2$) to qudits ($\bbC^d$) for an arbitrary positive integer $d$. The aim of this paper is to give a uniform algebraic formulation of the stabiliser formalism for qudits. The emphasis is not on constructing new codes, but on providing a conceptual framework in which the structure of stabiliser codes becomes transparent.

We work with the $n$-qudit Pauli group $\calP_n=\calP_n^{(d)}$ acting on $V=(\bbC^d)^{\otimes n}$. The proof of the proposition above for qubits is based on some symplectic linear algebra over $\bbF_2=\bbZ/2\bbZ$ applied to the $\bbF_2$-vector space obtained when we quotient the group $\calP_n$ by its center (this vector space has a natural symplectic form). It is easy to generalise Gottesman's result to the situation when $d=p$ is prime because we can then work over the finite field $\bbF_p$. However, for a general $d$, the situation is not so easy. We cannot use vector spaces over a field, but we have to work with modules over the ring $\bbZ_d=\bbZ/d\bbZ$. This ring is far from being a field. Moreover it has zero divisors. This leads to additional technical difficulties.

The dimension formula for stabiliser spaces $V^H$ in the qudit setting is known, see for instance \cite[Theorem 1]{gheorghiu2014standard}. The approach developed in our paper goes beyond dimension counting, as it also describes the induced action of $N(H)/H$ on $V^H$ and identifies it with a Pauli-type group in a uniform algebraic framework.

Among the $\Zd$-modules, the simplest ones are free $\Zd$-modules, they are the closest to vector spaces over a field. (For $d=p$ prime, all modules are free.) We generalise Gottesman's result in two steps. First, we assume that $H$ is a free $\Zd$-module. At the second step, we remove the "free" assumption and study how the structure of $V^H$ and the action of $N(H)/H$ on it look in general.

\Cref{thmA} below is the main result of the paper in the "free" case. It shows that the obvious natural generalisation of Gottesman's result is true when we assume that $H$ is free.

\begin{thmIntro}[\Cref{thm:main-free-dstab}]
	\label{thmA}
	Assume that $H\subset \calP_n^{(d)}$ is a commutative subgroup without non-trivial scalars such that $H$ is a free $\Zd$-module of rank $k$. Then there is a unitary isomorphism $V^H\simeq (\bbC^d)^{\otimes n-k}$ and an isomorphism of groups $N(H)/H\simeq \calP^{(d)}_{n-k}$ such that the action of $N(H)/H$ on $V^H$ is identified with the action of $\calP^{(d)}_{n-k}$ on $(\bbC^d)^{\otimes n-k}$.
\end{thmIntro}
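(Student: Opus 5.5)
Our plan is to reduce to a normal form by a Clifford-type change of coordinates, using symplectic linear algebra over $\Zd$. Write $\bar\calP_n=\calP_n/Z(\calP_n)\simeq \Zd^{2n}$ with the symplectic form $\omega$ coming from group commutators, $[g,h]=\zeta^{\omega(\bar g,\bar h)}$ with $\zeta=e^{2\pi i/d}$ (working with the version of $\calP_n^{(d)}$ whose center is generated by $\zeta$, or its doubled-scalar version for even $d$; the argument is the same). Since $H$ has no non-trivial scalars, $H\to\bar\calP_n$ is injective; call its image $\bar H$. Commutativity of $H$ means $\bar H$ is isotropic, and by hypothesis $\bar H\simeq\Zd^k$ is free.

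\textbf{Step 1 (linear algebra).} We show that a free isotropic submodule $\bar H\subset\Zd^{2n}$ of rank $k$ can be extended to a symplectic basis $e_1,\dots,e_n,f_1,\dots,f_n$ with $\bar H=\langle e_1,\dots,e_k\rangle$. The key point is that $\bar H$ is a direct summand: since $\Zd$ is self-injective, the inclusion $\Zd^k\hookrightarrow\Zd^{2n}$ splits, and equivalently (using nondegeneracy of $\omega$) the map $\Zd^{2n}\to\Hom(\bar H,\Zd)$, $v\mapsto\omega(v,-)$, is surjective. This gives $f_1,\dots,f_k$ with $\omega(f_i,e_j)=\delta_{ij}$, which we then make isotropic by the usual Gram--Schmidt corrections. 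The orthogonal complement of $\langle e_i,f_i\rangle_{i\le k}$ is then a free symplectic module of rank $2(n-k)$ with a symplectic basis, by induction: a unimodular vector has a partner, and we split off hyperbolic planes. This is the step I expect to be the main obstacle, since everything hinges on splitting and unimodularity over $\Zd$, which has zero divisors; the CRT reduction $\Zd\simeq\prod\bbZ/p^a$ to local rings is a useful fallback if a direct argument is awkward.

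\textbf{Step 2 (lifting to a unitary).} The symplectic group $Sp_{2n}(\Zd)$ is realised by Clifford unitaries normalising $\calP_n$; it is generated by transvections, each realised by an explicit Clifford gate. Alternatively, we avoid this: choose lifts $E_i,F_i\in\calP_n$ of $e_i,f_i$, with $E_i$ the chosen generators of $H$. These satisfy the Weyl relations $E_iF_i=\zeta F_iE_i$, all other pairs commute, and $E_i^d=F_i^d=1$ after rescaling by scalars (a point requiring care for $d$ even). By Stone--von Neumann uniqueness for the finite Heisenberg group (the irreducible representation with central character $\zeta\mapsto\zeta$ is unique, of dimension $d^n$), there is a unitary $U:V\to(\bbC^d)^{\otimes n}$ sending $E_i\mapsto Z_i$ and $F_i\mapsto X_i$. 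Under $U$, $H$ becomes $\langle Z_1,\dots,Z_k\rangle$.

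\textbf{Step 3 (conclusion).} Now $V^H=|0\rangle^{\otimes k}\otimes(\bbC^d)^{\otimes n-k}$, which is unitarily $(\bbC^d)^{\otimes n-k}$. The normaliser $N(H)$ equals the preimage of $\bar H^{\perp}=\langle e_1,\dots,e_k,e_{k+1},f_{k+1},\dots,e_n,f_n\rangle$, since normalising $H$ means commuting with it up to scalars that must lie in $H$, hence be trivial. Thus $N(H)=H\cdot\langle\text{scalars},Z_j,X_j\rangle_{j>k}$, and $N(H)/H\simeq\calP_{n-k}^{(d)}$. These operators act on $V^H$ exactly as the Pauli group on the last $n-k$ tensor factors.
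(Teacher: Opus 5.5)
Your proposal is correct and follows essentially the paper's route: extend the free isotropic image $\tau(H)$ to a symplectic basis of $\sM$, conjugate by a Clifford unitary to reduce to $H=\langle Z_1,\dots,Z_k\rangle$, and then read off $N(H)$ and $V^H$ directly; the first step is the paper's lemma on free isotropic submodules, proved by the same lift-the-dual-basis-and-orthogonalise argument. The only variation is in the lifting step: the paper invokes surjectivity of $Cl_n\to Sp(\sM,\phi)$ and afterwards removes the leftover phases $\xi^{a_i}$ by conjugating with $X_1^{a_1}\cdots X_k^{a_k}$, whereas you take the generators of $H$ themselves as the lifts $E_i$ and apply uniqueness of the irreducible $\zeta$-representation directly (the paper's alternative proof of that surjectivity), so no phase correction is needed.
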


Next, in \Cref{sec:Kitaev}, we show how this result can be applied to the study of the Kitaev model \cite{kitaev2003fault} for qudits. This section contains no new results. However, it is important to note that, as in the case $d=2$, the Kitaev model can now be treated within the stabiliser formalism in a uniform way, so that its main properties follow directly from the general framework, without the need for ad hoc arguments.

Next, we would like to understand what happens when we remove the assumption "free". Set $\dbar=d$ if $d$ is odd and $\dbar=2d$ if $d$ is even. Let $\zeta\in \bbC$ be a primitive $\dbar$-th root of unity. For a positive divisor $d_0$ of $d$, let $\calP_n^{(d_0,\zeta)}$ be the subgroup of $(\bbC^{d_0})^{\otimes n}$ generated by $\calP_n^{(d_0)}$ and by the operator of multiplication by $\zeta$.

The following theorem is the main "non-free" result of the paper.
\begin{thmIntro}[\Cref{thm:main-gen+rep}]	
	Assume that $H\subset \calP_n^{(d)}$ is a commutative subgroup without non-trivial scalars. Then we can find positive divisors $d_1,d_2,\ldots,d_k$ of $d$ such that the following holds. There is a unitary isomorphism 
	$$
	V^H\simeq \bbC^{d_1}\otimes \bbC^{d_2}\otimes \ldots \otimes\bbC^{d_k}
	$$
	and an isomorphism of groups 
	$$
	N(H)/H\simeq \calP_1^{(d_1,\zeta)}\times_\zeta \calP_1^{(d_2,\zeta)}\times_\zeta \ldots\times_\zeta\calP_1^{(d_k,\zeta)}
	$$ 
	such that the action of $N(H)/H$ on $V^H$ is identified with the action of 
	$$
	\calP_1^{(d_1,\zeta)}\times_\zeta \calP_1^{(d_2,\zeta)}\times_\zeta \ldots\times_\zeta\calP_1^{(d_k,\zeta)} \quad\mbox{on}\quad \bbC^{d_1}\otimes \bbC^{d_2}\otimes \ldots \otimes\bbC^{d_k}.
	$$
\end{thmIntro}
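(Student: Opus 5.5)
Let $\calP=\calP_n^{(d,\zeta)}$ denote the Pauli group with scalars $\langle\zeta\rangle$. The quotient $E=\calP/\langle\zeta\rangle\simeq \Zd^{2n}$ carries the symplectic commutator form $\omega\colon E\times E\to \Zd$ defined by $ab=\zeta^{(\dbar/d)\,\omega(\bar a,\bar b)}ba$. Since $H$ contains no nontrivial scalars, it embeds isomorphically into $E$ as an isotropic submodule $\bar H$. Its normaliser is $N(H)=\pi^{-1}(\bar H^\perp)$. Indeed, for $g\in\calP$ and $h\in H$ we have $ghg^{-1}=\zeta^{c}h$, and this lies in $H$ only if $\zeta^c=1$, because $H$ has no scalars. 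So the plan has three parts: first classify isotropic submodules $\bar H\subset\Zd^{2n}$ up to symplectic automorphisms; then realise the chosen normal form by a Clifford unitary; finally compute directly.

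\textbf{Step 1 (normal form).} I would prove a symplectic Smith normal form over $\Zd$. There is a symplectic basis $x_1,z_1,\dots,x_n,z_n$ of $\Zd^{2n}$ and integers $e_1,\dots,e_n$ dividing $d$ such that $\bar H=\bigoplus_i \Zd\, e_i z_i$. The argument works one prime at a time. Since $\Zd\simeq\prod_p \bbZ/p^{a}$ and everything decomposes along the CRT, I reduce to $d=p^a$, where $\Zd$ is a local PID. There I induct on $n$. Choose $h\in\bar H$ of minimal $p$-adic order, say $h=p^{s}u$ with $u$ unimodular; $u$ can be taken primitive in $E$ since $E$ is free. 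Then every element of $\bar H$ is divisible by $p^s$. Because $\omega$ is nondegenerate and $u$ is unimodular, there is $v$ with $\omega(u,v)=1$. The next point is to show that $\bar H$ splits as $\Zd h\oplus(\bar H\cap\langle u,v\rangle^\perp)$. For $h'\in\bar H$, write the projection $h'=\alpha u+\beta v+w$. Isotropy gives $\omega(h,h')=0$, which forces $p^s\beta=0$. I expect this to be the main obstacle, because zero divisors prevent killing $\beta$ outright. To fix it, I will replace $v$ by a suitable $v'=v+\gamma u+\ldots$ and use the minimality of $s$ to absorb the $\beta v$ components into the complement. The cleanest route is probably to first take the Smith normal form of the inclusion $\bar H\hookrightarrow E$ as $\Zd$-modules, ignoring $\omega$. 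This gives $\bar H=\bigoplus p^{s_j}u_j$ with the $u_j$ extendable to a basis. Isotropy of $\bar H$ yields only $p^{s_i+s_j}\omega(u_i,u_j)=0$, so I would then run a symplectic Gram--Schmidt modified to respect divisibility, which recovers isotropy of the lifted $u_j$ up to terms that can be corrected.

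\textbf{Step 2 (Clifford realisation).} Every symplectic automorphism of $\Zd^{2n}$ lifts to a unitary normalising $\calP$ (the Clifford group), and it carries the preimage of $\bar H$ to a conjugate subgroup. Because the lift of $\bar H$ is only determined up to scalars, each generator $e_iz_i$ becomes $\lambda_i Z_i^{e_i}$ for some scalar $\lambda_i$. I would conjugate further by a suitable Pauli $X$-power, or a phase Clifford, to make $\lambda_i=1$, or at least absorb $\lambda_i$ into a shift of eigenvalue. The condition of having no scalars guarantees that $(\lambda_iZ_i^{e_i})^{d/e_i}=1$, which makes this adjustment possible. This step is exactly where the "shifted" variants in the abstract appear.

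\textbf{Step 3 (computation).} After Steps 1 and 2, $V=\bigotimes_i\bbC^d$ and $H=\prod_i\langle Z_i^{e_i}\rangle$. Hence $V^H=\bigotimes_i(\bbC^d)^{\langle Z^{e_i}\rangle}$, and each factor is spanned by $|j\rangle$ with $j\in (d/e_i)\bbZ$. This is $\bbC^{d_i}$ with $d_i=e_i$, of dimension $e_i$ with orthonormal basis. Factors with $e_i=1$ contribute $\bbC^1$ and are dropped. Now $N(H)$ is generated by $\zeta$, $Z_i$ and $X_i^{d/e_i}$. On the $i$-th factor, $X^{d/e_i}$ acts as the shift of $\bbZ/e_i$ and $Z$ acts as multiplication by a primitive $e_i$-th root. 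This yields $\calP_1^{(d_i,\zeta)}$, and the scalars are shared, which gives the fibre product $\times_\zeta$. Finally I check that $N(H)/H$ acts faithfully. Its kernel consists of elements acting trivially on $V^H$, and these are exactly $H$: in the normal form, any such element must be a product of $Z_i^{e_i}$ powers. This gives the stated isomorphism and identifies the action.
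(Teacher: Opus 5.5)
\textbf{Step 1 is false, and Steps 2--3 depend on it.} Your normal form $\bar H=\bigoplus_i\Zd\,e_iz_i$ places $\bar H$ inside the Lagrangian $\langle z_1,\dots,z_n\rangle$ of some symplectic basis. For $n=1$ it makes $\bar H$ cyclic.

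Take $d=4$, $n=1$ and $H=\langle X^2,Z^2\rangle$. This group is abelian and contains no non-trivial scalars, and $\tau(H)=\langle 2x_1,2z_1\rangle\simeq\bbZ_2\oplus\bbZ_2$. Symplectic automorphisms preserve the isomorphism type, so no Clifford conjugation brings $H$ to the form $\langle\lambda Z^{e}\rangle$. The paper's example $\langle X^4,Z^4\rangle$ for $d=8$ shows the same thing. These are the paper's shifted and twisted subgroups $\langle {Z'}^{a},{X'}^{b}\rangle$. Their $\tau$-image has components along both $e_r$ and $f_r$, so they do not come from the scalar adjustment in your Step 2.

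The obstruction you flagged ($p^s\beta=0$ without $\beta=0$) is therefore genuine and cannot be removed by a better choice of $v$. In the paper's splitting of Lagrangians (\Cref{prop:classif-Lang}), the corresponding step works only because $L^\perp=L$ forces $af\in L$. A merely isotropic $\bar H$ has no such property.

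\textbf{Enlarging the normal form does not save the strategy.} Allowing single-qudit blocks $\langle Z_i^{a_i},X_i^{b_i}\rangle$ (the twisted free form) is not enough, because an isotropic submodule of $\sM$ need not split along any orthogonal decomposition into hyperbolic planes. Take $d=8$, $n=2$ and $H=\langle Z_1^2,\,X_1^4Z_2^2\rangle$, so $\tau(H)=\langle 2z_1,\,4x_1+2z_2\rangle\simeq\bbZ_4^2$. The submodule $\tilde U=\{m\in\sM:\,2m\in\tau(H)\}=\langle z_1,\,z_2+2x_1\rangle+4\sM$ is attached to $\tau(H)$ $Sp$-equivariantly, and $\phi(z_1,z_2+2x_1)=2$.

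Now compare with a block-split isotropic submodule isomorphic to $\bbZ_4^2$. A block $\langle 2e',2f'\rangle$ is not isotropic, so such a submodule must be $\langle 2g_1,2g_2\rangle$ with $g_1,g_2$ basis vectors of two different orthogonal planes. On the corresponding $\tilde U$, $\phi$ takes only values in $4\bbZ_8$. Hence this $H$ is not Clifford-conjugate to any product of one-qudit subgroups, and the factor-by-factor computation of Step 3 cannot be carried out.

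\textbf{The missing idea is never to normalise $\tau(H)$ itself.} The paper proceeds as follows:
\begin{enumerate}
\item Show $N(H)=\tau^{-1}(\tau(H)^\perp)$.
\item Show that $N(H)/H$ is the Heisenberg extension of the symplectic quotient $\tau(H)^\perp/\tau(H)$. The order-lifting axiom is where the absence of scalars in $H$ is used.
\item Decompose this quotient as $\bigoplus_r S_{d_r}$, which is always possible.
\item Use that $\calP_1^{(d_1,\zeta)}\times_\zeta\cdots\times_\zeta\calP_1^{(d_k,\zeta)}$ is a simple $\zeta$-group whose unique irreducible $\zeta$-representation is $\bigotimes_r\bbC^{d_r}$.
\item Conclude by the dimension count $\dim V^H=d^n/\#H=\sqrt{\#(\tau(H)^\perp/\tau(H))}$. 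The first equality comes from transitivity of $\calP_n$ on the characters of $H$.
\end{enumerate}
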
	
Above, the notation $G_1\times_\zeta G_2$ means the direct product of groups with the operators of multiplication by $\zeta$ identified in both groups.

Finally, we would like to identify the minimal generalisation of the assumption "free" such that the conclusion of \Cref{thmA} still holds. We introduce a procedure of shifting a free group $H$. We get the following theorem.
\begin{thmIntro}[\Cref{thm:shifted-free}]
	Let $H$ be an abelian subgroup of $\calP^{(d)}_n$ without non-trivial scalars. Then the group $N(H)/H$ is isomorphic to $\calP^{(d)}_{n-k}$ for some $k$ if and only if $H$ is shifted free. Moreover, in this case the action of $N(H)/H$ on $V^H$ can be identified with the action of $\calP^{(d)}_{n-k}$ on $(\bbC^d)^{\otimes (n-k)}$, where the isomorphism $V^H\simeq (\bbC^d)^{\otimes (n-k)}$ is unitary.
\end{thmIntro}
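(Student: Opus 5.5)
The plan is to deduce the statement from \Cref{thm:main-gen+rep} together with a module-theoretic analysis of the image $\bar H$ of $H$ in $\calP_n/Z(\calP_n)\simeq \Zd^{2n}$, equipped with its symplectic form $\omega$. Since $H$ contains no non-trivial scalars, $H\to\bar H$ is an isomorphism onto an isotropic submodule. I expect $N(H)$ to map onto $\bar H^{\perp}$, so that $N(H)/H$ is a central extension of $\bar H^\perp/\bar H$ by the scalars $\langle\zeta\rangle$. Here $\bar H^\perp/\bar H$ carries the induced non-degenerate form.

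By \Cref{thm:main-gen+rep}, $N(H)/H\simeq \calP_1^{(d_1,\zeta)}\times_\zeta\cdots\times_\zeta\calP_1^{(d_k,\zeta)}$, and its quotient by the scalars is $\bigoplus_i \Zd_{d_i}^{\,2}$. For ``only if'', suppose this group is isomorphic to $\calP^{(d)}_{n-k}$. The isomorphism preserves centers, and the quotient by the center of $\calP^{(d)}_{n-k}$ is $\Zd^{2(n-k)}$. Comparing invariant factors of the abelian groups $\bigoplus_i\Zd_{d_i}^2$ and $\Zd^{2(n-k)}$ forces every $d_i$ to equal $d$, after discarding the trivial factors $d_i=1$. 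Conversely, if all $d_i=d$, the product $\calP_1^{(d,\zeta)}\times_\zeta\cdots$ is exactly $\calP^{(d)}_{n-k}$, because $\zeta$ already lies in $\calP^{(d)}_1$. So both directions reduce to proving
$$\bar H^\perp/\bar H \text{ is free over } \Zd \iff \bar H \text{ is free over }\Zd,$$
together with the observation that ``$H$ shifted free'' means exactly that $\bar H$ is free. In other words, $H$ differs from a free subgroup $H_0$ with $\bar H_0=\bar H$ only by scalar phases attached to the elements.

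For the module equivalence, I would use a Smith normal form adapted to the symplectic structure. Write $\bar H=\bigoplus_j \Zd\cdot (d/e_j)x_j$, where the $x_j$ extend to a symplectic basis $x_j,y_j,\ldots$ of $\Zd^{2n}$. Producing such a basis needs a symplectic elementary-divisor lemma over $\Zd$. I would prove it by localising at each prime power dividing $d$ and working over the local rings $\bbZ/p^a$, where a unimodular vector can always be completed to a symplectic basis. With this basis, a direct computation gives $\bar H^\perp/\bar H\simeq \bigoplus_j \Zd_{d/e_j}^{\,2}\oplus \Zd^{2(n-r)}$, where $r$ is the number of cyclic summands of $\bar H$. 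This is free iff every $d/e_j\in\{1,d\}$. A summand with $d/e_j=d$ is zero, so freeness of the quotient is equivalent to $e_j=d$ for all $j$, i.e.\ to $\bar H$ being free. I expect the existence of this adapted symplectic basis to be the main obstacle, and it may partly be supplied by the proof of \Cref{thm:main-gen+rep}.

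For the ``moreover'' part, I would reduce to \Cref{thm:main-free-dstab}. Let $H_0$ be the free group with $\bar H_0=\bar H$. Then $H=\{\chi(h)h : h\in H_0\}$ for a character $\chi$ of $H_0$ with values in $\langle\zeta\rangle$, and $V^H$ is the $\chi^{-1}$-eigenspace of $H_0$. Because $\bar H_0$ is a free direct summand, non-degeneracy of $\omega$ lets me choose a Pauli element $p$ with $php^{-1}=\chi(h)^{-1}h$ for all $h\in H_0$. I would need to check separately that $\chi$ takes values in $d$-th roots of unity, as required for it to be realised by commutators. Then $pV^{H}=V^{H_0}$ unitarily, and conjugation by $p$ identifies $N(H)/H$ with $N(H_0)/H_0$ compatibly with the actions. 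Applying \Cref{thm:main-free-dstab} to $H_0$ then finishes the proof.
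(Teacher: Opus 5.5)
\paragraph{Verdict.} There is a genuine gap: the central equivalence of your proof is false.

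\paragraph{What is right.} Your first reduction is correct, and the paper starts the same way. Both centres are exactly $\langle\zeta\rangle$ by non-degeneracy, so $N(H)/H\simeq\calP_{n-k}$ holds exactly when $\bar H^\perp/\bar H$ is free of rank $2(n-k)$. The ``moreover'' part then follows directly from \Cref{thm:main-gen+rep} with all $d_i=d$; the detour through $H_0$ is not needed.

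\paragraph{The gap.} ``Shifted free'' does not mean that $\bar H$ is free up to phases. By definition $H=\langle {Z'}_1^{a_1},\ldots,{Z'}_k^{a_k},{X'_1}^{d/a_1},\ldots,{X'_k}^{d/a_k}\rangle$, i.e.\ $\bar H$ is a Lagrangian inside a free symplectic submodule of rank $2k$ (\Cref{rk:def-shifted-free}), and such an $\bar H$ is in general not free. Correspondingly, your symplectic elementary-divisor lemma is false, and so is the equivalence ``$\bar H^\perp/\bar H$ free $\iff$ $\bar H$ free''.

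Take $d=4$, $n=2$, $H=\langle Z_1^2,X_1^2\rangle$. This group is abelian without non-trivial scalars, and $\bar H=\langle 2z_1,2x_1\rangle\simeq(\bbZ/2\bbZ)^2$ is not free over $\bbZ_4$. Yet $\bar H^\perp=\langle 2z_1,2x_1,z_2,x_2\rangle$, so $\bar H^\perp/\bar H\simeq\bbZ_4^2$ is free, $N(H)/H\simeq\calP_1$ and $V^H\simeq\bbC^4$. Moreover $H$ is shifted free with $a_1=2$.

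A form $\bar H=\bbZ_4\cdot 2x'_1\oplus\bbZ_4\cdot 2x'_2$ with $\phi(x'_1,x'_2)=0$ cannot exist here. The reductions of $x'_1,x'_2$ modulo $2$ would have to span the hyperbolic plane $\langle z_1,x_1\rangle$ mod $2$, which forces $\phi(x'_1,x'_2)$ to be odd. This already happens over the local ring $\bbZ/4\bbZ$, so localising does not help. Non-free isotropic submodules have no normal form of this kind in general; only Lagrangians admit the mixed form $\langle d_ie_i,\frac{d}{d_i}f_i\rangle$ of \Cref{prop:class-Lagr}. Consequently your ``only if'' argument would prove a false intermediate claim, and your ``if'' argument never covers the genuinely shifted groups.

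\paragraph{The missing idea.} You should treat $\bar H$ as a Lagrangian in a free symplectic block rather than as a free direct summand.
\begin{enumerate}
\item Once $\bar H^\perp/\bar H$ is free symplectic of rank $2(n-k)$, lift a symplectic basis of it. This gives a free symplectic $N\subset\sM$ with $\bar H^\perp=\bar H\oplus N$.
\item By \Cref{rk:orth-sympl}, $N^\perp$ is free symplectic of rank $2k$, and it contains $\bar H$ because $\bar H\perp N$.
\item $\bar H$ is Lagrangian in $N^\perp$. Indeed, $\#\bar H^\perp=d^{2n}/\#\bar H$ and $\#\bar H^\perp=\#\bar H\cdot d^{2(n-k)}$ together give $\#\bar H=d^k=\sqrt{\#N^\perp}$.
\item The classification of Lagrangians in free symplectic modules (\Cref{prop:classif-Lang}) then yields exactly the shifted-free form.
\end{enumerate}
For the ``if'' direction, compute $\bar H^\perp$ directly from the shifted generators. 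It equals $\bar H\oplus\langle e_{k+1},\ldots,e_n,f_{k+1},\ldots,f_n\rangle$, so the quotient is free of rank $2(n-k)$.
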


We also define the notion of a twist, which is a slight generalisation of the notion of a shift, and we study how it changes the structure of $N(H)/H$ and $V^H$. Finally, we apply this notion to the Kitaev model. We introduce the notions of the shifted and the twisted Kitaev model. The study of such modifications of the Kitaev model becomes automatic in view of our generalised stabiliser formalism for qudits. This illustrates the advantage of the approach and the philosophy of using the stabiliser formalism as a general tool instead of finding special arguments to study a given model. 
We would like to stress that shifts and twists are not modifications introduced specifically for the Kitaev model, but rather natural constructions arising from the general stabiliser formalism.

The structure of the paper is as follows. For pedagogical reasons we split it into two parts: Part I (the free case) and Part II (the general case). The theory in the non-free case automatically covers the free case. However, we decided to make this separation because different parts may interest different audiences.

Before these two parts there is a preliminary Part 0 studying symplectic $\bbZ/d\bbZ$-modules. It contains no new results. However, it prepares all lemmas about $\bbZ/d\bbZ$-modules that we need in the required generality. Next, Part I is split into \Cref{sec:free-stab}, describing the general theorem of the stabiliser formalism in the free case, and \Cref{sec:Kitaev}, which shows how it can be applied to examples such as CSS codes and the Kitaev toric code. Next, Part II about the general (non-free) theory is also split into two sections. In \Cref{sec:not-free}, we give the general results about the non-free stabiliser formalism. In \Cref{sec:Kitaev-tw-sh}, we apply them to the shifted and twisted Kitaev model.

This paper is written with two different audiences in mind: on the one hand, researchers in quantum information, in particular those working on quantum error correction and stabiliser formalism; on the other hand, algebraists interested in the underlying algebraic structures, who may not be very familiar with quantum information.
As a consequence, some parts of the exposition may recall basic facts that are standard in one of these areas, while other parts may appear more condensed to readers from the other community. The author has attempted to keep a balance between these two perspectives and to present the material in a way that remains accessible to both audiences. In several proofs, it would be natural from an algebraic point of view to use homological algebra; however, we deliberately avoid it and give more direct arguments instead.

Even though this was not an original goal of the paper, the approach turns out to be sufficiently self-contained so that all statements from quantum information that appear in the paper are proved within our framework, without using well-known results from the literature as black boxes. We nevertheless refer to the original works to give proper credit to their authors.

In several cases, the arguments presented here provide shorter and more algebraic proofs of known results, whose original proofs are often more algorithmic or computational in nature. This reflects the fact that, once the algebraic framework is set up, these results arise naturally within it and can be obtained with relatively little additional effort.

For instance, we recover the following known results:
\begin{itemize}
	\item the surjectivity of the map from the Clifford group to the symplectic group \cite{hostens2005stabilizer},
	\item the classification of Lagrangian submodules of a free symplectic $\bbZ/d\bbZ$-module \cite{albouy2008matrix},
	\item the dimension of the protected space $V^H$ \cite{gheorghiu2014standard}.
\end{itemize}

In this sense, the paper may also serve as a concise introduction for algebraists to some algebraic aspects of the stabiliser formalism and quantum error correction, presented in a purely algebraic language.

\bigskip
\newpage
\addtocontents{toc}{{\textbf{Part 0: Preparation}}}
\centerline{\LARGE Part 0: Preparation}
\section{Symplectic $(\bbZ/d\bbZ)$-modules}
\label{sec:sympl-mod}

\subsection{Notation and conventions}
Let us abbreviate $\Zd$ for $\bbZ/d\bbZ$. It will however sometimes be useful to use the notation $\bbZ/d\bbZ$ when we want to stress that it is the quotient of the ring $\bbZ$ by the ideal $d\bbZ$.

We start with a remark about the notion of a $\Zd$-module. Usually, when $R$ is an arbitrary ring, when we say "$M$ is an $R$-module", we mean that $M$ is an abelian group with an additional structure of multiplication by elements of $R$, satisfying some natural axioms.

However, for $R=\Zd$, it is more reasonable to think that being a $\Zd$-module is not an additional structure, but an additional property of an abelian group $M$. Indeed, since each abelian group $M$ has a unique structure of a $\bbZ$-module, there is no difference between being a $\bbZ$-module and being an abelian group. Now, for a $\bbZ$-module $M$, the action of $\bbZ$ factors through the action of $\Zd=\bbZ/d\bbZ$ if and only if the ideal $d\bbZ$ acts by zero on $M$, i.e. if and only if the order of each element of $M$ divides $d$. So, finally, this means that there is no difference between "$M$ is a $\Zd$-module" and "$M$ is an abelian group such that the order of each element divides $d$". Moreover, for a $\Zd$-module $M$, there is no difference between the notions "a $\Zd$-submodule of $M$" and "a subgroup of $M$".

For $a,b\in \bbZ$ such that $a\leqslant b$ we set $[a;b]=\{a,a+1,\ldots,b-1,b\}$. We mean that for $a=b$ this set contains one element. For a finite set $X$ denote by $\#X$ the number of elements in $X$.

For some elements $e_1,e_2,\ldots,e_n$ of a $\Zd$-module $M$ we denote by $\langle e_1,e_2,\ldots,e_n\rangle$ the submodule generated by them (i.e. all $\Zd$-linear combinations). Recall that we say that $(e_1,\ldots,e_n)$ is a basis of $M$ if 
$M=\langle e_1,\ldots,e_n\rangle$ and if any relation 
$\sum_{i=1}^n \lambda_i e_i = 0$ with $\lambda_i\in \Zd$ 
implies $\lambda_1=\cdots=\lambda_n=0$. A $\Zd$-module is called \emph{free} if it admits a basis. (We only work with finite $\Zd$-modules in this paper.)

Abusing the notation, we often use the same symbol for an integer $a\in\bbZ$ and its image in $\Zd$. When we write fractions, we always mean that we divide in $\bbZ$ (this is only possible when the division is exact), and sometimes we then consider the result as an element of $\Zd$. For example, for a divisor $a$ of $d$, we may write $\frac{d}{a}\in\Zd$, meaning that we first divide $d$ by $a$ in $\bbZ$ and then consider the image of the result in $\Zd$.

\subsection{Free $\Zd$-modules}

The following proposition recalls the well-known classification of finite $\Zd$-modules.
\begin{prop}
	\label{prop:classif-Zd-mod}
	Let $N$ be a finite $\Zd$-module. Then there exists a unique tuple of positive integers $(d_1,d_2,\ldots,d_n)$ such that $1<d_1\mid d_2\mid\ldots\mid d_n\mid d$, and such that $N\simeq \bigoplus_{r=1}^n\bbZ_{d_r}$.
\end{prop}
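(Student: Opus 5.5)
This is the structure theorem for finite abelian groups, specialised to groups of exponent dividing $d$. I would prove existence and uniqueness separately, working directly with the abelian group $N$ and using the observation above that a $\Zd$-module is just an abelian group in which every element has order dividing $d$.

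\textbf{Existence.} Induct on $\#N$; if $N=0$, take the empty tuple. Otherwise let $m$ be the maximal order of an element of $N$, so $m\mid d$, and choose $x\in N$ of order $m$. Every element of $N$ has order dividing $m$: if $y$ has order not dividing $m$, then for some prime $p$ with $p^a\| m$ we have $p^{a+1}$ dividing the order of $y$, and a suitable multiple of $y$ combined with $x$ gives an element of order $mp$, contradicting maximality. Hence $N$ is a $\bbZ_m$-module.

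The key step is to show that $\langle x\rangle\simeq\bbZ_m$ is a direct summand. I would do this via injectivity of $\bbZ_m$ over itself. Since $\bbZ_m$ is a self-injective ring (equivalently, by Baer's criterion, since every ideal of $\bbZ_m$ is principal, generated by a divisor $e$ of $m$), the identity map $\langle x\rangle\to\bbZ_m$ extends to a homomorphism $\pi:N\to\bbZ_m$. Then $N=\langle x\rangle\oplus\Ker\pi$.

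In keeping with the paper's wish to avoid homological language, I would make the extension step elementary. Extend $\pi$ one generator at a time. Suppose $\pi$ is already defined on a subgroup $A$ and $y\notin A$. Let $e$ be minimal with $ey\in A$; then $e\mid m$. We need $\pi(y)=t$ with $et=\pi(ey)$. Because $ey$ has order dividing $m/e$, the value $\pi(ey)$ is killed by $m/e$, so it lies in $e\bbZ_m$, and the required $t$ exists.

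Applying induction to $\Ker\pi$, whose exponent divides $m$, gives $\Ker\pi\simeq\bigoplus\bbZ_{d_r}$ with $d_r\mid m$. Appending $d_n=m$ produces the required chain $1<d_1\mid\cdots\mid d_n=m\mid d$.

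\textbf{Uniqueness.} The tuple is determined by the counts $\#(p^jN)$ taken over all primes $p$ and all $j\ge0$. For each prime $p$ and each $j$, the quotient $p^jN/p^{j+1}N$ is an $\bbF_p$-vector space whose dimension equals the number of $r$ with $p^{j+1}\mid d_r$. These numbers determine the $p$-adic valuations of all the $d_r$, provided the $d_r$ are arranged in divisibility order. Together with the requirement $d_1>1$ this pins down $n$ and every $d_r$.

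The main obstacle is the splitting step. Everything else is bookkeeping; the one real input is the divisibility argument showing that $\pi(ey)$ lies in $e\bbZ_m$.
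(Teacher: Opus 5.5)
The paper gives no proof of this proposition. It recalls it as the well-known classification of finite abelian groups of exponent dividing $d$ and uses it as a black box. Your argument is therefore a self-contained replacement for that citation, and it is correct.

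Your existence step follows the pattern the paper itself uses later in \Cref{lem:structure-sympl} and \Cref{prop:classif-Lang}: take an element of maximal order and split off the cyclic submodule it generates by extending a linear form.

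The one-generator-at-a-time extension deserves attention. Run it verbatim with $\Zd$ in place of $\bbZ_m$, using the fact that an element of $\Zd$ killed by $d/e$ lies in $e\Zd$. This proves \Cref{lem:star-surj} directly for every finite $\Zd$-module. The paper instead proves that lemma by embedding $M$ into a free module, a step whose natural justification is the present proposition. So your route makes Part~0 genuinely self-contained and elementary, in the spirit of the author's wish to avoid homological algebra. The citation buys only brevity.

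Two small presentation points.

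\begin{itemize}
\item The parenthetical ``equivalently, by Baer's criterion, since every ideal of $\bbZ_m$ is principal'' is not a valid reason on its own: $\bbZ$ is a principal ideal ring that is not self-injective. What actually makes Baer's criterion hold is your annihilator observation. A homomorphism $e\bbZ_m\to\bbZ_m$ sends $e$ into $\{u:\ (m/e)u=0\}=e\bbZ_m$. Since your elementary argument already does the work, you can simply drop that parenthetical.
\item In the uniqueness part, spell out two facts. First, $n=\max_p\dim_{\bbF_p}N/pN$, because a prime dividing $d_1>1$ divides every $d_r$. Second, for fixed $p$ and $j$, the indices $r$ with $p^{j}\mid d_r$ form a terminal segment of $[1;n]$ by the divisibility chain. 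Hence the counts you compute determine each $v_p(d_r)$.
\end{itemize}
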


The following proposition, known as the Smith normal form, describes the submodules of a free $\Zd$-module of finite rank.
\begin{prop}
	\label{prop-Smith}
	Assume that $M$ is a free $\Zd$-module of rank $n$ and $N$ is a submodule of $M$. Then there exist positive integers $a_1,a_2,\ldots,a_n$ and a $\Zd$-basis $e_1,e_2,\ldots,e_n$ of $M$ such that $a_1\mid a_2 \mid a_3 \mid\ldots \mid a_n \mid d$ and such that $N$ is generated by $a_1e_1$, $a_2e_2$, ..., $a_ne_n$.
\end{prop}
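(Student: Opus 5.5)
We prove the Smith normal form statement of \Cref{prop-Smith} by induction on the rank $n$, identifying $M$ with $\Zd^n$. The key tool is an ``order ideal'' argument. For $x\in M$ and a linear form $f\colon M\to\Zd$, the image $f(x)$ generates an ideal of $\Zd$. Every ideal of $\Zd$ is of the form $a\Zd$ for a unique positive divisor $a$ of $d$. If $N=0$ we may take all $a_i=d$ and any basis, so assume $N\neq 0$.

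\textbf{Step 1 (choosing the first pivot).} Consider all ideals $f(N)\subseteq\Zd$ as $f$ ranges over $\Hom(M,\Zd)$. These ideals are finitely many, since each is $a\Zd$ with $a\mid d$. Choose $f$ so that $f(N)=a_1\Zd$ with $a_1$ minimal with respect to divisibility among the divisors occurring. We will show that this ideal is in fact the largest one, i.e.\ that every $g(N)$ is contained in $a_1\Zd$. Pick $y\in N$ with $f(y)=a_1$.

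\textbf{Step 2 (divisibility).} We claim that every coordinate of every element of $N$, in any basis, is divisible by $a_1$. First, $y$ itself is divisible by $a_1$. To see this, take any $g$ and let $b$ be a generator of the ideal $(a_1,g(y))\subseteq\Zd$. Since $\Zd$ is a principal ideal ring, there are $u,v\in\bbZ$ with $ua_1+vg(y)=b$ modulo $d$. Then the form $h=uf+vg$ satisfies $h(N)\ni b$. By maximality of $a_1\Zd$, this forces $b\in a_1\Zd$. Hence $g(y)\in a_1\Zd$ for all $g$, which means $y=a_1e_1$ for some $e_1\in M$. Since $f(e_1)a_1=a_1$, we can adjust $e_1$ so that $f(e_1)=1$. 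This adjustment is the main technical point: since $f(e_1)-1$ is annihilated by $a_1$, we can replace $e_1$ by $e_1+w$ with $a_1w=0$. A suitable $w$ exists by freeness of $M$: take $w=\frac{d}{a_1}t\,z$ where $f(z)=1$, for an appropriate $t$. Concretely, any $e$ with $a_1e=y$ differs from the given one by an element of $\frac{d}{a_1}M$, and $f(\frac{d}{a_1}M)=\frac{d}{a_1}\Zd$ contains $f(e_1)-1$. Once $f(e_1)=1$, we get a decomposition $M=\langle e_1\rangle\oplus\Ker f$ with $\langle e_1\rangle\simeq\Zd$, and $\Ker f$ is free of rank $n-1$. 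The freeness of $\Ker f$ follows because $M/\langle e_1\rangle$ is projective and hence free over the local factors of $\Zd$, or more directly by completing $e_1$ to a basis using a unimodular row argument.

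\textbf{Step 3 (splitting $N$).} For $x\in N$, we have $f(x)=\lambda a_1$, so $x-\lambda y\in N\cap\Ker f$. Hence $N=\langle a_1e_1\rangle\oplus N'$, where $N'=N\cap\Ker f$. Apply induction to $N'\subseteq\Ker f$, obtaining a basis $e_2,\dots,e_n$ of $\Ker f$ and divisors $a_2\mid\dots\mid a_n\mid d$ such that $N'$ is generated by $a_2e_2,\dots,a_ne_n$.

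\textbf{Step 4 (comparing pivots).} It remains to check that $a_1\mid a_2$. Apply the maximality argument of Step 2 to the form $f+e_2^*$, where $e_2^*$ is the dual coordinate to $e_2$. Its image of $N$ contains the ideal $(a_1,a_2)$, so $(a_1,a_2)\subseteq a_1\Zd$, which gives $a_1\mid a_2$.

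We expect the main obstacle to be Step 2, namely showing that $\Ker f$ is free. If needed, we would instead reduce to the prime power case via the Chinese remainder decomposition $\Zd\simeq\prod\bbZ_{p^r}$. Over the local ring $\bbZ_{p^r}$, a vector with a unit coordinate can be completed to a basis by an elementary matrix. Moreover, over a local ring the maximality in Step 1 simply means choosing an element of $N$ and a coordinate of minimal $p$-adic valuation, which is the classical row and column reduction argument.
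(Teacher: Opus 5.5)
The paper does not prove this statement; it quotes it as the classical Smith normal form. So your argument has to stand on its own, and as written it has a gap in Step~2.

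Your proof is the standard PID argument, and it breaks exactly where that argument cancels $a_1$ in $a_1f(e_1)=a_1$. Your substitute for cancellation is to adjust $e_1$ by an element of $\frac{d}{a_1}M$. This needs $f(\frac{d}{a_1}M)=\frac{d}{a_1}\Zd$, which you justify by picking $z$ with $f(z)=1$, i.e.\ by assuming $f$ is surjective. Nothing in Step~1 ensures this.

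Concretely, take $d=6$, $M=\bbZ_6$, $N=2\bbZ_6$, and let $f$ be multiplication by $2$. Every form $g$ satisfies $g(N)\subseteq 2\bbZ_6=f(N)$, so Step~1 allows this $f$, with $a_1=2$ and $y=4$. But $f(M)=2\bbZ_6$ and $f(3\bbZ_6)=0$, so no $e_1$ with $f(e_1)=1$ exists. Without it you get neither $M=\langle e_1\rangle\oplus\Ker f$ nor the splitting in Step~3.

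By contrast, the obstacle you flag is harmless. Once $f(e_1)=1$, the isomorphism $\Zd\oplus\Ker f\simeq\Zd^n$ together with the uniqueness in \Cref{prop:classif-Zd-mod} gives $\Ker f\simeq\Zd^{n-1}$.

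The missing ingredient is a \emph{surjective} $f$ with $f(N)$ maximal. Your CRT fallback supplies it, though for a different reason than the one you state. Over $\bbZ_{p^r}$, suppose $f(N)\neq0$ is maximal and $f$ is not surjective. Then $f=pf'$ for some form $f'$, and $f'(N)\supsetneq pf'(N)=f(N)$, a contradiction. So $f$ is surjective, and your adjustment works verbatim. The local normal forms then glue along $\Zd\simeq\prod_p\bbZ_{p^{r_p}}$: multiply the local $a_i$ together, and check divisibility prime by prime.

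Alternatively, you can stay over $\Zd$. If $f(M)=c\Zd$, write $f=cf'$ with $f'$ surjective; this requires lifting a unimodular row from $\bbZ_{d/c}$ to $\Zd$, again by CRT. Then $f'(N)\supseteq f(N)$ forces $f'(N)=a_1\Zd$, and you may replace $f$ by $f'$.

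With this in place, Steps~3 and~4 are fine, including the $f+e_2^*$ trick for $a_1\mid a_2$. Your announced claim that every element of $N$ is divisible by $a_1$ is only proved for $y$, but you never use it.
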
 

Note that in the proposition above some $a_k$ is allowed to be $d$. In this case we get $a_ke_k=0$.

\begin{coro}
	\label{coro:free-Smith}
	If a submodule $N$ of a finite free $\Zd$-module $M$ is free, then $N$ is a direct factor of $M$ (i.e., we have $M=N\oplus N'$ for some submodule $N'\subset M$). Moreover, each such complement $N'$ is also free.
\end{coro}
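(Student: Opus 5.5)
Apply \Cref{prop-Smith} to $N\subset M$: there is a basis $e_1,\ldots,e_n$ of $M$ and divisors $a_1\mid a_2\mid\cdots\mid a_n\mid d$ with $N=\langle a_1e_1,\ldots,a_ne_n\rangle$. Then
$$
N\simeq \bigoplus_{r=1}^n a_r\Zd\simeq \bigoplus_{r=1}^n \bbZ_{d/a_r}.
$$
First I show that freeness of $N$ forces each $a_r$ to be $1$ or $d$. Since $N$ is free, $N\simeq \Zd^{m}$ for some $m$. Every factor $\bbZ_{d/a_r}$ with $1<a_r<d$ is a nonzero summand $\bbZ_{c}$ with $1<c<d$. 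The summands with $a_r=d$ vanish, and those with $a_r=1$ give copies of $\bbZ_d$. By the uniqueness in \Cref{prop:classif-Zd-mod}, the invariant tuple of $N$ is $(d,\ldots,d)$, with $m$ entries. Compare this with the tuple read off from the decomposition above, namely the values $d/a_r$ for which $d/a_r>1$, listed in increasing order. We conclude that no $a_r$ lies strictly between $1$ and $d$. This uniqueness comparison is the only point needing care. An alternative is to count elements killed by a prime $p\mid d$: these counts also force every $d/a_r$ to equal $d$ or $1$.

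Since the $a_r$ divide one another, they can be ordered so that $a_1=\cdots=a_m=1$ and $a_{m+1}=\cdots=a_n=d$. Then $N=\langle e_1,\ldots,e_m\rangle$. Setting $N_0=\langle e_{m+1},\ldots,e_n\rangle$ gives $M=N\oplus N_0$. This proves the first claim.

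For the second claim, let $N'$ be any complement, so $M=N\oplus N'$. Then $N'\simeq M/N\simeq N_0\simeq \Zd^{n-m}$, which is free. Here the isomorphism $N'\to M/N$ is the restriction of the projection: it is injective because $N\cap N'=0$ and surjective because $N+N'=M$.
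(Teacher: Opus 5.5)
Your proof is correct and follows the same route as the paper: Smith normal form, freeness forcing each $a_r\in\{1,d\}$, a complement spanned by the remaining basis vectors, and any complement being isomorphic to $M/N$. You also justify the step $a_r\in\{1,d\}$ through the uniqueness of invariant factors in \Cref{prop:classif-Zd-mod}, a point the paper only asserts.
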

\begin{proof}
	Consider the Smith normal form of $N$. Then the submodule $N$ is free over $\Zd$ if and only if there exists an index $k\in [0;n]$ such that $a_i=1$ if $1\leqslant i \leqslant k$ and $a_i=d$ if $k<i\leqslant n$. In this case it is obvious that $N$ is a direct factor. Moreover, each complement is isomorphic to the quotient module $M/N$, which is obviously free.
\end{proof}

\begin{coro}
	\label{coro:free-complete-basis}
	If $N$ is a free submodule of a finite free $\Zd$-module $M$, then each basis of $N$ can be completed to a basis of $M$.
\end{coro}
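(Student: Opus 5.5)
By \Cref{coro:free-Smith}, $N$ is a direct factor of $M$ with a free complement. So the plan is to fix one such decomposition $M=N\oplus N'$ with $N'$ free, and then glue a basis of $N'$ onto the given basis of $N$.

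Let $(f_1,\ldots,f_k)$ be the given basis of $N$, and choose a basis $(g_1,\ldots,g_m)$ of $N'$.

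Spanning is immediate. Every $x\in M$ decomposes as $x=y+y'$ with $y\in N$ and $y'\in N'$. Expanding $y$ in the $f_i$ and $y'$ in the $g_j$ shows that $(f_1,\ldots,f_k,g_1,\ldots,g_m)$ generates $M$.

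For linear independence, take a relation
$$\sum_i \lambda_i f_i+\sum_j\mu_j g_j=0 .$$
The first sum lies in $N$ and the second lies in $N'$. Since $N\cap N'=0$, both sums vanish separately. Because $(f_i)$ is a basis of $N$ and $(g_j)$ is a basis of $N'$, all $\lambda_i$ and all $\mu_j$ are zero. Hence $(f_1,\ldots,f_k,g_1,\ldots,g_m)$ is a basis of $M$ extending $(f_i)$.

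There is no real obstacle here; all the substance sits in \Cref{coro:free-Smith}. The only point that needs care is that $N'$ must be free, and not merely a complement. That is exactly what the corollary guarantees, since $N'\simeq M/N$ and $M/N$ is free by the Smith normal form description.
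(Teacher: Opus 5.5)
Your proof is correct and follows the paper's route exactly. The paper just says the statement follows immediately from \Cref{coro:free-Smith}, and you have written out the routine check that a basis of $N$ together with a basis of the free complement $N'$ gives a basis of $M$.
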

\begin{proof}
	This follows immediately from the corollary above.
\end{proof}

\subsection{Alternating form}

\begin{df}
	\label{def:alt-form}
	Let $M$ be a $\Zd$-module. Let $\phi\colon M\times M\to \Zd$ be a map. We say that $\phi$ is an \emph{alternating form} if it is $\Zd$-bilinear and satisfies $\phi(m,m)=0$ for each $m\in M$. We say that the $\Zd$-submodule $Ker\phi=\{m_0\in M |\, \forall m\in M,\phi(m_0,m)=0\}$ is the kernel of the form.
\end{df}
\begin{rk}
	Note that an alternating form automatically satisfies the property $\phi(m_1,m_2)=-\phi(m_2,m_1)$ for $m_1,m_2\in M$.
\end{rk}

Let $M$ be a $\Zd$-module. Denote by $M^*$ the dual $\Zd$-module $\Hom_{\Zd}(M,\Zd)$. 

\begin{lem}
	\label{lem:star-surj}
	Assume that $M$ is a finite $\Zd$-module and that $N$ is a submodule. Then the natural map $M^*\to N^*$ is surjective. 
\end{lem}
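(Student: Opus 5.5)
The strategy is to reduce to the case where the ambient module is free, and then use the Smith normal form (\Cref{prop-Smith}) to compute all the duals explicitly. The underlying point is that $\Zd$ is self-injective. We avoid homological language and argue directly with bases.

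\emph{Step 1: reduction to free $M$.} Since $M$ is finite, choose generators $m_1,\ldots,m_n$ of $M$. This gives a surjection $\pi\colon F=\Zd^n\to M$. Let $\tilde N=\pi^{-1}(N)\subset F$. Take $f\in N^*$. Then $f\circ\pi|_{\tilde N}\in \tilde N^*$. Suppose we have shown that it extends to some $g\in F^*$. Then $g$ vanishes on $\Ker\pi$, because $\Ker\pi\subset\tilde N$ and $f\circ\pi$ kills $\Ker\pi$. Hence $g$ factors through $M$, giving $\bar g\in M^*$ with $\bar g|_N=f$. So it suffices to treat the case of a free $M$.

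\emph{Step 2: the free case.} Let $M$ be free of rank $n$. By \Cref{prop-Smith}, there is a basis $e_1,\ldots,e_n$ of $M$ and divisors $a_1\mid\cdots\mid a_n\mid d$ such that $N$ is generated by the elements $a_ie_i$. Moreover, $N=\bigoplus_i\langle a_ie_i\rangle$, since the $e_i$ form a basis. Let $f\in N^*$ and set $c_i=f(a_ie_i)\in\Zd$. The element $a_ie_i$ has order $d/a_i$, so $\frac{d}{a_i}c_i=0$ in $\Zd$. This means $d$ divides $\frac{d}{a_i}\tilde c_i$ for an integer lift $\tilde c_i$ of $c_i$, and therefore $a_i\mid \tilde c_i$. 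Set $b_i=\tilde c_i/a_i$, and define $g\in M^*$ by $g(e_i)=b_i$. Then $g(a_ie_i)=a_ib_i=c_i$. Since $f$ and $g|_N$ agree on the direct summands $\langle a_ie_i\rangle$, we conclude $g|_N=f$.

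The main point requiring care is the divisibility step. We need $\frac{d}{a}c=0$ in $\Zd$ to imply $c\in a\Zd$, and this has to be checked at the level of integer lifts, which is done above. Everything else is formal bookkeeping. In Step 1, the key observation is that $\Ker\pi\subset\tilde N$, which is what guarantees the descent of $g$ to $M$.
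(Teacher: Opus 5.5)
Your proof is correct. The free case is the same Smith-normal-form argument as in the paper; you simply write out the divisibility check $\frac{d}{a_i}c_i=0\Rightarrow a_i\mid\tilde c_i$, which the paper calls immediate.

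Your reduction to the free case runs in the opposite direction from the paper's. The paper \emph{embeds} $M$ into a finite free module $\overline M$. It applies the free case to $N\subset\overline M$, so the composite $\overline M^*\to M^*\to N^*$ is surjective, and this forces $M^*\to N^*$ to be surjective. You instead \emph{cover} $M$ by a free module $\pi\colon F\to M$, extend from $\pi^{-1}(N)$ to $F$, and descend to $M$ using $\Ker\pi\subset\pi^{-1}(N)$.

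The paper's reduction is a one-liner. However, it tacitly uses that every finite $\Zd$-module embeds in a free one, which comes from the classification in \Cref{prop:classif-Zd-mod} via $\bbZ_{d_r}\hookrightarrow\Zd$, $1\mapsto d/d_r$. Your reduction needs only a finite generating set of $M$. The price is a short factorisation check, which you carry out correctly.
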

\begin{proof}
	Assume first that $M$ is a free $\Zd$-module. Then the statement follows immediately from the Smith normal form (\Cref{prop-Smith}).
	
	In general, we can embed $M$ into a finite free $\Zd$-module $\overline M$. Then the inclusions $N\subset M\subset \overline M$ yield 
	$
	\overline M^*\to M^*\to N^*.
	$
	Since $\overline M$ is free, the map $\overline M^*\to N^*$ is surjective, and therefore the map $M^*\to N^*$ is also surjective.
\end{proof}

\begin{lem}
	Let $M$ be a finite $\Zd$-module. Let $\phi\colon M\times M\to \Zd$ be an alternating form. Then the following statements are equivalent.
	
	$(a)$ We have $\Ker\phi=\{0\}$.
	
	$(b)$ The map $M\to M^*$, $m\mapsto \phi(m,\bullet)$ is bijective.
\end{lem}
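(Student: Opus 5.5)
The plan is to compare the kernel of the map with $\Ker\phi$, and then use a cardinality argument. Denote by $\Phi\colon M\to M^*$ the map $m\mapsto \phi(m,\bullet)$. It is $\Zd$-linear because $\phi$ is bilinear. Its kernel is exactly $\{m_0\in M\mid \forall m\in M,\ \phi(m_0,m)=0\}=\Ker\phi$. Hence $(b)\Rightarrow(a)$ is immediate: a bijective map has trivial kernel.

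For $(a)\Rightarrow(b)$, assume $\Ker\phi=\{0\}$, so $\Phi$ is injective. Since $M$ is finite, it suffices to show $\#M^*=\#M$; then $\Phi$ is an injective map between finite sets of the same cardinality, hence bijective.

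To compute $\#M^*$, I will use \Cref{prop:classif-Zd-mod} to write $M\simeq \bigoplus_{r=1}^n\bbZ_{d_r}$ with each $d_r\mid d$. Duality turns finite direct sums into direct sums, so $M^*\simeq\bigoplus_r \Hom_{\Zd}(\bbZ_{d_r},\Zd)$. A homomorphism $\bbZ_{d_r}\to\Zd$ is determined by the image $x$ of $1$. The only constraint on $x$ is $d_r x=0$ in $\Zd$, that is, $x\in \frac{d}{d_r}\Zd$. This subgroup is cyclic of order $d_r$. Therefore $\#\Hom_{\Zd}(\bbZ_{d_r},\Zd)=d_r$, and so $\#M^*=\prod_r d_r=\#M$.

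I do not expect a real obstacle. The only point requiring care is this count $\#\Hom(\bbZ_{d_r},\Zd)=d_r$, which is elementary.

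As an alternative avoiding the classification, one could use \Cref{lem:star-surj}. Embed $M$ in a free module $\overline M$ and compare cardinalities via the surjection $\overline M^*\to M^*$. However, the direct count above is simpler, so I would use that.
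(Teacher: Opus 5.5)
Your proof is correct and follows the same route as the paper: identify $\Ker\phi$ with the kernel of $m\mapsto\phi(m,\bullet)$, then upgrade injectivity to bijectivity using $\#M=\#M^*$. The paper simply asserts this equality of cardinalities, whereas your count via \Cref{prop:classif-Zd-mod} actually proves it; your sketched alternative via \Cref{lem:star-surj} would only give the bound $\#M^*\leqslant\#\overline M$, so it would not suffice on its own, but you do not need it.
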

\begin{proof}
	It is clear that condition $(a)$ is equivalent to the injectivity of the map in $(b)$. However, since $M$ and $M^*$ are finite sets of the same cardinality, there is no difference between the injectivity and the bijectivity of this map.
\end{proof}

From now on, we always assume that the $\Zd$-module $M$ is finite.
\begin{df}
	For an alternating form $\phi$ on a finite $\Zd$-module we say that it is \emph{symplectic} if it satisfies the equivalent conditions $(a)$ and $(b)$ above. We will use this term only for finite modules (where the conditions are equivalent).
	We may simply say "$(M,\phi)$ is symplectic", meaning that "$M$ is a finite $\Zd$-module with a symplectic form $\phi$". 
	
	Fix a symplectic form $\phi$ on $M$. For a submodule $N\subset M$ we set $N^\perp=\{m\in M|\,\forall n\in N, \phi(m,n)=0\}$.
\end{df}

\begin{lem}
	\label{lem:card-Nperp}
	We have $\#N^\perp = \frac{\#M}{\#N}$.
\end{lem}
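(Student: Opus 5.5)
We identify $N^\perp$ with the kernel of a surjective homomorphism and then count. Consider the composite map
$$
\psi\colon M\longrightarrow M^*\longrightarrow N^*,\qquad m\mapsto \restr{\phi(m,\bullet)}{N},
$$
where the first arrow is $m\mapsto\phi(m,\bullet)$ and the second is restriction of linear forms to $N$. By definition, $m\in\Ker\psi$ if and only if $\phi(m,n)=0$ for all $n\in N$, that is, $\Ker\psi=N^\perp$.

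The map $\psi$ is surjective. The first arrow is bijective because $\phi$ is symplectic (condition $(b)$). The restriction map $M^*\to N^*$ is surjective by \Cref{lem:star-surj}. Hence $M/N^\perp\simeq N^*$, and
$$
\#N^\perp=\frac{\#M}{\#N^*}.
$$

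It remains to show that $\#N^*=\#N$ for any finite $\Zd$-module $N$. By \Cref{prop:classif-Zd-mod}, write $N\simeq\bigoplus_r\bbZ_{d_r}$ with each $d_r\mid d$. Duality commutes with finite direct sums, so it suffices to check that $\Hom_{\Zd}(\bbZ_{a},\Zd)\simeq\bbZ_a$ for a divisor $a$ of $d$. A homomorphism $\bbZ_a\to\Zd$ is determined by the image $x$ of $1$, subject to $ax=0$ in $\Zd$. The solutions are exactly the multiples of $\frac{d}{a}$, and these form a cyclic group of order $a$. This gives $\#N^*=\#N$ and finishes the proof.

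No step is a serious obstacle once \Cref{lem:star-surj} is available. The only point needing care is the equality $\#N^*=\#N$, which is elementary.
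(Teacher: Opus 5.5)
Your proof is correct and follows essentially the same route as the paper: you identify $M\simeq M^*$ via $\phi$, realise $N^\perp$ as the kernel of the restriction map $M^*\to N^*$ (surjective by \Cref{lem:star-surj}), and count. The only addition is that you explicitly justify $\#N^*=\#N$ via \Cref{prop:classif-Zd-mod}, which the paper uses without comment.
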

\begin{proof}
	By \Cref{lem:star-surj}, the morphism $M^*\to N^*$ is surjective. If we identify $M$ with $M^*$ via the symplectic form $\phi$, then the kernel of this morphism is $N^\perp$. This implies $\#N^\perp=\frac{\#M^*}{\#N^*}=\frac{\#M}{\#N}$.
\end{proof}

\begin{coro}
	\label{coro:N-perp-perp}
	We have $(N^\perp)^\perp=N$.
\end{coro}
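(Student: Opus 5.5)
The plan is to combine an obvious inclusion with a cardinality count coming from \Cref{lem:card-Nperp}. First, I will check that $N\subset (N^\perp)^\perp$. Let $n\in N$ and $m\in N^\perp$. By definition of $N^\perp$ we have $\phi(m,n)=0$. Since $\phi$ is alternating, it is antisymmetric, so $\phi(n,m)=-\phi(m,n)=0$. As this holds for every $m\in N^\perp$, we get $n\in (N^\perp)^\perp$.

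Next, I will apply \Cref{lem:card-Nperp} twice: once to the submodule $N$, and once to the submodule $N^\perp$ of $M$. This gives
$$
\#(N^\perp)^\perp=\frac{\#M}{\#N^\perp}=\frac{\#M}{\#M/\#N}=\#N.
$$
The lemma applies to any submodule of the symplectic module $M$, and $N^\perp$ is such a submodule, being the kernel of a linear map.

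Finally, $N$ and $(N^\perp)^\perp$ are finite sets of the same cardinality and one is contained in the other, so they are equal. I expect no real obstacle here. The only point needing care is the antisymmetry used in the first step: the definition of $N^\perp$ puts $m$ in the first slot of $\phi$, and the remark after \Cref{def:alt-form} is exactly what lets us swap the arguments.
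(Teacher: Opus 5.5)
Your proof is correct and follows the paper's argument: the paper also notes the inclusion $N\subset (N^\perp)^\perp$ and concludes equality by the cardinality count from \Cref{lem:card-Nperp}. You make explicit two steps the paper leaves implicit, namely the antisymmetry needed for the inclusion and the fact that the lemma is applied twice.
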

\begin{proof}
	We obviously have $N\subset (N^\perp)^\perp$. However, this inclusion must be an equality for cardinality reasons.
\end{proof}

\begin{df}
	Let $d_0$ be a positive integer divisor of $d$. The \emph{elementary symplectic $\Zd$-module} $S_{d_0}$ is the $\Zd$-module $\bbZ_{d_0}\oplus \bbZ_{d_0}$ with the symplectic form $\phi\colon (\bbZ_{d_0}\oplus\bbZ_{d_0})\times (\bbZ_{d_0}\oplus\bbZ_{d_0})\to \Zd$ given by $\phi((1,0),(0,1))=d/d_0$.
\end{df}

If $(M_1,\phi_1)$ and $(M_2,\phi_2)$ are two $\Zd$-modules with alternating forms, then we can define their (orthogonal) direct sum $(M_1\oplus M_2,\phi_1\oplus\phi_2)$ as the $\Zd$-module $M_1\oplus M_2$ equipped with the alternating form $\phi=\phi_1\oplus \phi_2$ given by 
$\phi((m_1,m_2),(m'_1,m'_2))=\phi_1(m_1,m'_1)+\phi_2(m_2,m'_2)$.
When we use the notation $S_{d_0}$, we understand that the symplectic form on it is fixed. Then we may speak about direct sums of symplectic modules of the form $S_{d_0}$ (and we understand that we consider the symplectic form obtained as the direct sum).

\begin{lem}
	\label{lem:structure-sympl}
	For each nonzero symplectic $(M,\phi)$, we can find positive integers $d_1,d_2,\ldots,d_k$ such that $1<d_1\mid d_2\mid d_3\mid\ldots \mid d_k\mid d$ and we have an isomorphism $M\simeq S_{d_1}\oplus S_{d_2}\oplus \ldots \oplus S_{d_k}$ of $\Zd$-modules respecting the symplectic form.
\end{lem}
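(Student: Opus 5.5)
We argue by induction on $\#M$, splitting off one elementary symplectic summand $S_{d_0}$ of maximal order at each step. Let $d_k$ be the exponent of $M$, i.e. the largest order of an element; it divides $d$ and is $>1$ since $M\neq 0$.

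\textbf{Step 1: a pair realising the maximal order.} Choose $e\in M$ of order $d_k$. By \Cref{prop:classif-Zd-mod}, $\langle e\rangle\simeq\bbZ_{d_k}$ is a direct summand of $M$. Hence there is a homomorphism $M\to\Zd$ sending $e$ to $d/d_k$: project onto $\bbZ_{d_k}$ and embed $\bbZ_{d_k}\hookrightarrow\Zd$ via $1\mapsto d/d_k$. Alternatively, \Cref{lem:star-surj} extends the map $\langle e\rangle\to\Zd$, $e\mapsto d/d_k$, to $M$. Since $\phi$ is symplectic, this functional has the form $\phi(\bullet,f)$ for some $f\in M$. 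Thus $\phi(e,f)=d/d_k$, an element of order exactly $d_k$ in $\Zd$. Consequently $f$ has order at least $d_k$, and therefore exactly $d_k$.

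\textbf{Step 2: the plane $P=\langle e,f\rangle$ is isomorphic to $S_{d_k}$.} Suppose $ae+bf=0$. Pairing with $f$ gives $a\,d/d_k=0$, so $d_k\mid a$. Pairing with $e$ gives $d_k\mid b$. Hence $P\simeq\bbZ_{d_k}\oplus\bbZ_{d_k}$. The form on $P$ is determined by $\phi(e,f)=d/d_k$ together with $\phi(e,e)=\phi(f,f)=0$, so $P\simeq S_{d_k}$ as symplectic modules. In particular the restriction of $\phi$ to $P$ is symplectic.

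\textbf{Step 3: orthogonal splitting $M=P\oplus P^\perp$.} Any $m\in P\cap P^\perp$ lies in the kernel of $\phi|_P$, which is zero, so $P\cap P^\perp=0$. By \Cref{lem:card-Nperp}, $\#P^\perp=\#M/\#P$, so $P+P^\perp=M$ by counting. The restriction of $\phi$ to $P^\perp$ is again symplectic. Indeed, if $m\in P^\perp$ pairs trivially with $P^\perp$, it also pairs trivially with $P$, hence with all of $M$, so $m=0$. The exponent of $P^\perp$ divides $d_k$. By induction, $P^\perp\simeq S_{d_1}\oplus\cdots\oplus S_{d_{k-1}}$ with $1<d_1\mid\cdots\mid d_{k-1}$, each $d_i$ dividing the exponent of $P^\perp$ and hence $d_k$; if $P^\perp=0$ we stop. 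Appending the summand $S_{d_k}$ gives the required decomposition with $d_{k-1}\mid d_k\mid d$.

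The main obstacle is Step 1, namely producing $f$ with $\phi(e,f)$ of full order $d_k$. Extending the functional from $\langle e\rangle$, via \Cref{lem:star-surj} or via the direct-summand property, handles this. Choosing $e$ of maximal order is what makes $P$ free over $\bbZ_{d_k}$ and keeps the divisibility chain intact.
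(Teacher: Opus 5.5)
Your proof is correct and follows essentially the same route as the paper. You pick an element of maximal order, use \Cref{lem:star-surj} and the identification $M\simeq M^*$ to find a partner pairing to $d/d_k$, split off $\langle e,f\rangle\simeq S_{d_k}$ orthogonally via \Cref{lem:card-Nperp}, and induct on the complement; you simply spell out the independence of $e,f$ and the non-degeneracy on $P^\perp$, which the paper leaves implicit. The only soft spot is the claim that $\langle e\rangle$ is a direct summand ``by \Cref{prop:classif-Zd-mod}'', which does not follow immediately from the classification as stated, but your alternative via \Cref{lem:star-surj} is exactly the paper's argument and suffices.
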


\begin{proof}
	Let $m$ be an element of $M$ of maximal order and let $a$ be its order.
	
	By \Cref{lem:star-surj}, we can extend each $\Zd$-linear form from $\langle m \rangle$ to $M$, and then (using $M\simeq M^*$) we can find $m'\in M$ such that $\phi(m',m)=\frac{d}{a}$. Since the order $a$ of $m$ is maximal, the order of $m'$ is also $a$ (it cannot be smaller than $a$ because of $\phi(m',m)=\frac{d}{a}$).
	
	Let $N=\langle m,m'\rangle$ be the $\Zd$-submodule of $M$ generated by $m$ and $m'$. It is clear from the construction of $N$ that we have $N^\perp \cap N=\{0\}$. However, since $\#N\cdot \#N^\perp =\#M$ by \Cref{lem:card-Nperp}, we get $M=N\oplus N^\perp$. 
	
	We have $N\simeq S_a$. Since $a$ is the maximal order of an element of $M$, all other orders must be divisors of $a$. In particular, the orders of all elements of $N^\perp$ are divisors of $a$. Then we just apply induction to $N^\perp$, and we get that $N^\perp$ is isomorphic to a direct sum of $S_{a'}$ such that $a'$ are divisors of $a$ (and these $a'$ may be ordered linearly in a way that each $a'$ divides the next one).  
\end{proof}

\begin{coro}[Lagrangian decomposition]
	Assume that $(M,\phi)$ is symplectic. Then we can find a $\Zd$-module $L$ such that we have an isomorphism of $\Zd$-modules $M\simeq L\oplus L^*$ and under this isomorphism the form $\phi$ looks in the following way:
	$$
	((l_1,l'_1),(l_2,l_2'))\mapsto l'_1(l_2)-l'_2(l_1).
	$$
\end{coro}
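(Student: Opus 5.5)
The plan is to reduce to the elementary modules via \Cref{lem:structure-sympl} and build the decomposition summand by summand. If $M=0$ there is nothing to prove, since we may take $L=0$. Otherwise \Cref{lem:structure-sympl} gives an isomorphism $M\simeq S_{d_1}\oplus\cdots\oplus S_{d_k}$ that respects the symplectic form. Because orthogonal direct sums are compatible with the desired shape of the form, it is enough to treat a single $S_{d_0}$ and then take direct sums.

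Concretely, suppose for each $i$ we have $L_i$ and an isomorphism $S_{d_i}\simeq L_i\oplus L_i^*$ carrying the form to $((l_1,l_1'),(l_2,l_2'))\mapsto l_1'(l_2)-l_2'(l_1)$. Set $L=\bigoplus L_i$. Then $L^*=\bigoplus L_i^*$ canonically, since the dual of a finite direct sum is the direct sum of the duals. Under this identification the pairing $l'(l)$ is the sum of the componentwise pairings. So the orthogonal sum of the forms on $L_i\oplus L_i^*$ is exactly the standard form on $L\oplus L^*$.

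For a single $S_{d_0}$, with basis-like generators $e=(1,0)$ and $f=(0,1)$ of order $d_0$, I take $L=\langle e\rangle\simeq\bbZ_{d_0}$. I need to understand $L^*=\Hom_{\Zd}(\bbZ_{d_0},\Zd)$: a homomorphism is determined by the image of $1$, which must be killed by $d_0$. Hence it lies in $\frac{d}{d_0}\Zd\simeq\bbZ_{d_0}$, so $L^*$ is cyclic of order $d_0$, generated by $\epsilon\colon e\mapsto \frac{d}{d_0}$. I map $f\mapsto -\epsilon$, or equivalently $f\mapsto \phi(f,\bullet)|_L$ up to the sign convention. Then I check the formula on generators: $\phi(e,f)=\frac{d}{d_0}$ should equal $0(\cdot)-(-\epsilon)(e)=\frac{d}{d_0}$, and the pairs $(e,e)$, $(f,f)$ give $0$. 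By bilinearity the forms agree everywhere.

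The only step needing care is this identification $L^*\simeq\bbZ_{d_0}$ together with the sign bookkeeping. Since $\Zd$ is not a field, I must confirm that $\Hom_{\Zd}(\bbZ_{d_0},\Zd)$ is isomorphic to $\bbZ_{d_0}$ rather than something smaller, which the divisibility argument above settles. Everything else is routine.
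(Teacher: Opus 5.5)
Your proof is correct and follows the same route as the paper: you reduce via \Cref{lem:structure-sympl} to elementary summands $S_{d_0}$, treat those by hand, and use the fact that the standard form is compatible with orthogonal direct sums. The paper simply declares the elementary case obvious, and your explicit check supplies exactly the omitted details: $L=\langle e\rangle$, $L^*$ cyclic of order $d_0$ generated by $\epsilon\colon e\mapsto \frac{d}{d_0}$, and $f\mapsto-\epsilon$ (together with your separate treatment of $M=0$).
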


\begin{proof}
	The statement is obvious for an elementary symplectic module $S_{d_0}$. Then it is automatically true for a direct sum of such modules. Then, by \Cref{lem:structure-sympl}, this holds for all symplectic modules.
\end{proof}

\subsection{Free symplectic modules}
\begin{lem}
	\label{lem:free-sympl}
	Assume that $M$ is finite and $\phi$ is alternating. Then the following two conditions are equivalent.
	\begin{itemize}
		\item[$(a)$] $M$ is free and $\phi$ is symplectic.
		\item[$(b)$] There is a basis $(e_1,\ldots,e_n,f_1,\ldots,f_n)$ of $M$ such that for all $i, j\in [1;n]$ we have $\phi(e_i,e_j)=\phi(f_i,f_j)=0$, $\phi(e_i,f_j)=\delta_{i,j}$ where $\delta_{i,j}$ is the Kronecker delta.
	\end{itemize}
\end{lem}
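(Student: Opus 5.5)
The direction $(b)\Rightarrow(a)$ is a direct check. The family $(e_1,\ldots,e_n,f_1,\ldots,f_n)$ is a basis, so $M$ is free. To see that $\Ker\phi=\{0\}$, take $m=\sum_i\lambda_ie_i+\sum_i\mu_if_i$ in the kernel. Pairing with $f_j$ gives $\phi(m,f_j)=\lambda_j=0$. Pairing with $e_j$ gives $\phi(m,e_j)=-\mu_j=0$. Hence $m=0$ and $\phi$ is symplectic.

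For $(a)\Rightarrow(b)$ I will use \Cref{lem:structure-sympl}. It gives an isomorphism $M\simeq S_{d_1}\oplus\ldots\oplus S_{d_k}$ respecting the form, with $1<d_1\mid\ldots\mid d_k\mid d$. As a $\Zd$-module this is $\bigoplus_r(\bbZ_{d_r}\oplus\bbZ_{d_r})$. Since $M$ is free, it is isomorphic to $\Zd^{N}$ for some $N$. The uniqueness in \Cref{prop:classif-Zd-mod} then forces every $d_r$ to equal $d$ (the case $M=0$ is trivial, with $n=0$).

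In $S_d$ the form satisfies $\phi((1,0),(0,1))=d/d=1$. Let $e_r$ and $f_r$ be the images of $(1,0)$ and $(0,1)$ from the $r$-th summand $S_d$. Because the sum is orthogonal, $\phi(e_i,f_j)=\delta_{i,j}$ and $\phi(e_i,e_j)=\phi(f_i,f_j)=0$ for $i\neq j$. The diagonal values $\phi(e_i,e_i)$ and $\phi(f_i,f_i)$ vanish because $\phi$ is alternating. This gives $(b)$ with $n=k$.

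The only subtle step is the application of \Cref{prop:classif-Zd-mod}. There one must note that $\Zd^N$ corresponds to the tuple $(d,\ldots,d)$, so uniqueness excludes any $d_r<d$.
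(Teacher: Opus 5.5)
Your proposal is correct and follows essentially the same route as the paper: $(b)\Rightarrow(a)$ by direct verification, and $(a)\Rightarrow(b)$ by decomposing $M\simeq S_{d_1}\oplus\ldots\oplus S_{d_k}$ via \Cref{lem:structure-sympl} and using the uniqueness in \Cref{prop:classif-Zd-mod} to force every $d_r=d$. Your extra details (the explicit kernel computation and the separate treatment of $M=0$, which \Cref{lem:structure-sympl} excludes) are accurate.
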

\begin{proof}
	$(b)\implies (a)$ is obvious. Let us show $(a)\implies (b)$. Assume that $M$ is free and $\phi$ is symplectic. Then we have $M\simeq S_{d_1}\oplus S_{d_2}\oplus \ldots \oplus S_{d_n}$ as in \Cref{lem:structure-sympl}. However, the module $M$ may be free only in the case $d_1=d_2=\ldots =d_n=d$ (by the uniqueness of $d_1,d_2,\ldots,d_n$ in \Cref{prop:classif-Zd-mod}). In this case we get a basis of the expected form.
\end{proof}

\begin{df}
	We say that $(e_1,\ldots,e_n,f_1,\ldots,f_n)$ is a \emph{symplectic basis} of a free symplectic $\Zd$-module $M$ if it is a basis satisfying the conditions in \Cref{lem:free-sympl} $(b)$.
\end{df}

From the point of view of the terminology introduced above, for a finite $M$ with an alternating form $\phi$, being free and symplectic is the same thing as the existence of a symplectic basis.
From now on we assume that $M$ is a finite free $\Zd$-module of rank $2n$ with a symplectic form $\phi$.

\begin{df}
	\label{df:isotr}
	Let $N$ be a $\Zd$-submodule of $M$.
	
	$(a)$ We say that $N$ is isotropic if $N\subset N^\perp$ (i.e., in other words, if the restriction of $\phi$ to $N$ is zero).
	
	$(b)$ We say that $N$ is Lagrangian if $N=N^\perp$ (i.e., in other words, if $N$ is isotropic and cannot be included into a strictly bigger isotropic submodule; in view of \Cref{lem:card-Nperp} this is also equivalent to "$N$ is isotropic of cardinality $\sqrt{\#M}$").
	
	$(c)$ We say that $N$ is a symplectic submodule if $N\cap N^\perp=\{0\}$ (i.e., in other words, the restriction of $\phi$ to $N$ is still symplectic).
\end{df}

\begin{rk}
	\label{rk:orth-sympl}
	Note additionally that, in view of \Cref{lem:card-Nperp}, the condition $N\cap N^\perp=\{0\}$ is equivalent to $M=N+N^{\perp}$ and then it is also equivalent to $M=N\oplus N^\perp$. In particular, every symplectic submodule has an orthogonal complement, which is also a symplectic submodule.
\end{rk}

\begin{lem}
	\label{lem:free-isotr}
Let $M$ be a finite free $\Zd$-module with a symplectic form $\phi$. Assume that $L$ is a free isotropic submodule of $M$. Then each basis $(e_1,\ldots,e_k)$ of $L$ may be included in a symplectic basis $(e_1,\ldots,e_n,f_1,\ldots,f_n)$ of $M$.
\end{lem}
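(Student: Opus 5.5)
I will argue by induction on $n$, the half-rank of $M$. The key construction is a dual vector $f_1$ to $e_1$: choose $f_1\in M$ with $\phi(e_1,f_1)=1$, $\phi(e_i,f_1)=0$ for $2\leqslant i\leqslant k$. Such $f_1$ exists because $L$ is free with basis $(e_1,\ldots,e_k)$. The linear form $\lambda\colon L\to\Zd$ with $\lambda(e_1)=-1$ and $\lambda(e_i)=0$ for $i\geqslant 2$ is well defined. By \Cref{lem:star-surj}, $\lambda$ extends to an element of $M^*$. Since $\phi$ is symplectic, this form equals $\phi(f_1,\bullet)$ for some $f_1\in M$. Then $\phi(e_1,f_1)=-\phi(f_1,e_1)=1$ and $\phi(e_i,f_1)=0$ for $i\geqslant 2$.

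Next I will split off the plane spanned by $e_1,f_1$. Set $N=\langle e_1,f_1\rangle$. The restriction of $\phi$ to $N$ has Gram matrix $\begin{pmatrix}0&1\\-1&0\end{pmatrix}$, which is invertible over $\Zd$. Hence $N$ is free of rank $2$ with basis $(e_1,f_1)$, and $N\cap N^\perp=\{0\}$. Indeed, if $ae_1+bf_1\in N^\perp$, pairing with $f_1$ and with $e_1$ gives $a=b=0$. By \Cref{rk:orth-sympl}, $M=N\oplus N^\perp$, and $N^\perp$ is symplectic. By \Cref{coro:free-Smith}, $N^\perp$ is a complement of the free submodule $N$ in the free module $M$, so it is free. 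Its rank is $2n-2$, by counting cardinalities via \Cref{lem:card-Nperp}.

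Now I will apply the induction hypothesis inside $N^\perp$. The elements $e_2,\ldots,e_k$ lie in $N^\perp$: they pair to zero with $e_1$ because $L$ is isotropic, and with $f_1$ by construction. Let $L'=\langle e_2,\ldots,e_k\rangle$. It is isotropic, and it is free with basis $(e_2,\ldots,e_k)$, since a subfamily of a basis of $L$ remains linearly independent. By induction, $(e_2,\ldots,e_k)$ extends to a symplectic basis $(e_2,\ldots,e_n,f_2,\ldots,f_n)$ of $N^\perp$. Adjoining $e_1,f_1$ gives a basis of $M=N\oplus N^\perp$ with the required pairings, because the two summands are orthogonal.

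The base cases are $k=0$, which is \Cref{lem:free-sympl}, and $n=0$, which is trivial. When $k=0$ the induction simply reduces to that lemma, so I can treat the lemma as the case with empty $L$ and let the induction run on $k$ instead. The only delicate point is the existence of $f_1$. This is exactly where freeness of $L$ is needed, since otherwise the prescribed linear form on $L$ would not be well defined. The other thing to check carefully is that $N^\perp$ is free, which \Cref{coro:free-Smith} supplies.
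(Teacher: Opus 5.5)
Your proof is correct and follows essentially the paper's argument. It uses the same ingredients: dual vectors obtained from \Cref{lem:star-surj} together with $M\simeq M^*$, an orthogonal splitting via \Cref{rk:orth-sympl}, freeness of the complement from \Cref{coro:free-Smith}, and \Cref{lem:free-sympl} for what remains. The only difference is that the paper constructs all of $f_1,\ldots,f_k$ at once and then makes them mutually orthogonal by adding combinations of the $e_i$, whereas your induction splits off one hyperbolic plane at a time, so orthogonality of the $f_j$ comes automatically.
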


\begin{proof}
	By \Cref{lem:star-surj}, the map $M^*\to L^*$ is surjective. Then the isomorphism $M\simeq M^*$ allows us to lift $\Zd$-linear forms in $L^*$ to elements of $M$. This allows us to construct elements $f_1,\ldots,f_k\in M$ such that $\phi(e_i,f_j)=\delta_{i,j}$. The elements $(e_1,\ldots,e_k,f_1,\ldots,f_k)$ are clearly $\Zd$-linearly independent (to see this, apply $\phi(e_i,\bullet)$ and $\phi(f_i,\bullet)$ to a linear combination). Moreover, we can assume that all $f_i$ are orthogonal to each other; otherwise we can add an appropriate linear combination of the $e_i$'s to each $f_j$. We have proved that the elements $(e_1,\ldots,e_k,f_1,\ldots,f_k)$ form a symplectic basis of a submodule $M_1$ of $M$.
	
	We have $M=M_1\oplus M_1^\perp$. Now, by \Cref{coro:free-Smith}, $M_1^\perp$ is also free. Since $M_1^\perp$ is free and symplectic, we can find a symplectic basis $(e_{k+1},\ldots,e_n,f_{k+1},\ldots,f_n)$ of $M_1^\perp$.
	
\end{proof}

Without the assumption "free", there is no such nice form as in \Cref{lem:free-isotr} for an arbitrary isotropic submodule. However, this can be fixed at least for Lagrangians, as is done in \cite[Theorem 7]{albouy2008matrix}.

\begin{prop}
	\label{prop:class-Lagr}
Assume that $L\subset M$ is Lagrangian. Then there exists a symplectic basis $(e_1,\ldots,e_n,f_1,\ldots,f_n)$ of $M$ and positive integers $d_1,d_2,\ldots,d_n$ such that $d_1\mid d_2 \mid d_3 \mid \ldots \mid d_n \mid d\mid d_1^2$, and such that $L=\langle d_1e_1,d_2e_2,\ldots,d_ne_n,\frac{d}{d_1}f_1,\frac{d}{d_2}f_2,\ldots,\frac{d}{d_n}f_n\rangle$.
\end{prop}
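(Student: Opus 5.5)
We argue by induction on $n$, splitting off one hyperbolic plane $\langle e,f\rangle$ at each step such that $L$ decomposes compatibly. The point is to build the splitting from the \emph{smallest} Smith invariant of $L$. For $n=0$ there is nothing to prove.

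\textbf{Step 1: choose the plane.} Apply \Cref{prop-Smith} to $L\subset M$. This gives a basis $g_1,\ldots,g_{2n}$ of $M$ and integers $a_1\mid a_2\mid\ldots\mid a_{2n}\mid d$ with $L=\langle a_1g_1,\ldots,a_{2n}g_{2n}\rangle$. Set $a=a_1$ and $f=g_1$. Then:
\begin{itemize}
\item $af\in L$, and $L\subset aM$, since every generator lies in $aM$;
\item $f$ is a basis element of $M$, so $\langle f\rangle$ is free of rank $1$.
\end{itemize}
By \Cref{lem:star-surj} and $M\simeq M^*$, there is $e\in M$ with $\phi(e,f)=1$. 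Then $M_1=\langle e,f\rangle$ is free symplectic with symplectic basis $(e,f)$. By \Cref{rk:orth-sympl} we get $M=M_1\oplus M_1^\perp$, and $M_1^\perp$ is free by \Cref{coro:free-Smith}, hence free symplectic.

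\textbf{Step 2: $L$ contains $\tfrac{d}{a}e$ and $af$, and $a^2\mid d$.} We already have $af\in L$. For $y\in L$, write $y=az$ with $z\in M$. Then $\phi(\tfrac da e,y)=d\,\phi(e,z)=0$. Since $L=L^\perp$, this gives $\tfrac da e\in L$.

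Now $\tfrac da e\in L\subset aM$, and $e$ is part of a basis of $M$. Reading off the $e$-coordinate, $a$ divides $\tfrac da$ in $\Zd$. Since $a\mid d$, this means $a^2\mid d$.

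\textbf{Step 3: $L$ splits.} Take $y\in L$ and write its $M_1$-component as $\phi(y,f)e-\phi(y,e)f$.
\begin{itemize}
\item Since $y\in aM$, the coefficient $\phi(y,e)$ lies in $a\Zd$.
\item Since $af\in L$ and $L$ is isotropic, $a\,\phi(y,f)=0$, so $\phi(y,f)\in \tfrac da\Zd$.
\end{itemize}
Hence the $M_1$-component of $y$ lies in $\langle \tfrac da e, af\rangle\subset L$. Subtracting it, the $M_1^\perp$-component also lies in $L$. Therefore
\[
L=\langle \tfrac da e,af\rangle\oplus L',\qquad L'=L\cap M_1^\perp.
\]

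Next, $L'$ is Lagrangian in $M_1^\perp$. It is isotropic, and a cardinality count gives $\#L'=d^{n-1}$: we have $\#L=d^n$ and $\#\langle \tfrac da e,af\rangle = a\cdot\tfrac da=d$. By \Cref{df:isotr}(b), isotropic of cardinality $\sqrt{\#M_1^\perp}=d^{n-1}$ means Lagrangian.

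\textbf{Step 4: induction and ordering.} By induction we get a symplectic basis of $M_1^\perp$ and a chain of divisors for $L'$. Call the minimal Smith invariant of $L'$ (in $M_1^\perp$) $a'$. Since $L'\subset L\subset aM$ and $M=M_1\oplus M_1^\perp$, we have $L'\subset aM_1^\perp$, so $a\mid a'$.

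Run the construction so that step $i$ produces $e_i,f_i$ and $a_i$, with $a_1\mid a_2\mid\ldots\mid a_n$ and $a_i^2\mid d$ for each $i$. Then $L=\langle \tfrac d{a_i}e_i,\ a_if_i\rangle_i$.

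Set $d_i=d/a_i$, so that $\tfrac{d}{d_i}=a_i$ and $L=\langle d_ie_i,\tfrac d{d_i}f_i\rangle_i$. The conditions translate as follows:
\begin{itemize}
\item $a_i^2\mid d$ becomes $d\mid d_i^2$;
\item $a_1\mid\ldots\mid a_n$ becomes $d_n\mid\ldots\mid d_1$.
\end{itemize}
Reversing the indexing gives exactly the statement.

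The main obstacle is Step 3, that $L$ really splits along $M_1\oplus M_1^\perp$. It is also the reason for choosing $af$ with $a$ the minimal invariant rather than an element of maximal order. Minimality gives $L\subset aM$, which controls the $f$-coefficients, and isotropy against $af$ controls the $e$-coefficients.
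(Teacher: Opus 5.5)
Your proof is correct and is essentially the paper's argument (\Cref{prop:classif-Lang}, with \Cref{lem:isotr-in-Sd} for the two-dimensional blocks): split off a hyperbolic plane built on an element of $L$ of maximal order, show that $L$ splits along it, and induct. Your $af$ is exactly such a maximal-order element, since for free $M$ the inclusion $L\subset aM$ is equivalent to $L$ having exponent $d/a$, so the ``rather than'' in your closing remark is not a real contrast. What the Smith-form choice genuinely adds is that $f$ is automatically a basis vector, so a partner $e$ with $\phi(e,f)=1$ exists (the paper passes over this when it writes $m=be$). You also verify the chain $a_i\mid a_{i+1}$ and $a_i^2\mid d$ explicitly, which the paper leaves implicit.
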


\begin{rk}
	By \Cref{lem:free-isotr}, if the Lagrangian $L$ is free, then in the proposition above we can take $d_1=d_2=\ldots=d_n=d$ and then $(f_1,\ldots,f_n)$ is a basis of $L$.
\end{rk}

For the elementary free symplectic $\Zd$-module $S_d=\Zd\oplus \Zd$, it is easy to describe the Lagrangians by hand.

\begin{lem}
	\label{lem:isotr-in-Sd}
	Every isotropic submodule $N$ in $S_d$ is of the form $N=\langle ae,bf\rangle$ for some symplectic basis $(e,f)$ of $S_d$ and some positive integers $a,b$ such that $d\mid ab$ and $b\mid a$. In particular, each Lagrangian submodule $L$ of $S_d$ is of the form $L=\langle ae,\frac{d}{a}f\rangle$, where $a$ is a positive integer such that $a\mid d\mid a^2$.
\end{lem}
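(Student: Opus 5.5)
Because $S_d=\Zd\oplus\Zd$ is free of rank $2$, I will work in coordinates: $S_d=\Zd e_0\oplus\Zd f_0$ with $\phi(e_0,f_0)=1$. For $x=\alpha e_0+\beta f_0$ and $y=\gamma e_0+\delta f_0$ we have $\phi(x,y)=\alpha\delta-\beta\gamma$, the determinant. A change of symplectic basis is therefore a matrix in $SL_2(\Zd)$. The statement then amounts to a normal form for submodules $N\subset\Zd^2$ on which every $2\times 2$ determinant vanishes, up to $SL_2(\Zd)$ acting on the ambient module.

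\emph{Step 1: Smith form.} By \Cref{prop-Smith}, there is a basis $(e,f')$ of $S_d$ and integers $b\mid a\mid d$ with $N=\langle bf',ae\rangle$. The indices are relabelled so that the smaller divisor $b$ goes with the second vector.

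\emph{Step 2: making the basis symplectic.} The basis $(e,f')$ has $\phi(e,f')=u$, where $u$ is a unit because the basis matrix is invertible. Replacing $f'$ by $f=u^{-1}f'$ gives a symplectic basis $(e,f)$. Since $u$ is a unit, $\langle bf\rangle=\langle bf'\rangle$, so $N=\langle ae,bf\rangle$ still holds.

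\emph{Step 3: isotropy.} Isotropy of $N$ reduces to $\phi(ae,bf)=ab=0$ in $\Zd$, that is, $d\mid ab$. We may take $a,b$ to be positive divisors of $d$, using $d$ in place of $0$. This gives the first claim.

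\emph{Lagrangian case.} Since $\#\langle ae,bf\rangle=\frac{d}{a}\cdot\frac{d}{b}$ and Lagrangians have cardinality $\sqrt{\#S_d}=d$ (\Cref{df:isotr}), we get $ab=d$. Hence $b=\frac{d}{a}$, and the condition $b\mid a$ becomes $d\mid a^2$.

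\emph{Main obstacle.} The only delicate point is Step 2. We need \Cref{prop-Smith} to produce an arbitrary basis in which $N$ is diagonal, and then we must see that rescaling one vector by a unit does not alter the submodule. If one is worried that the ordering $b\mid a$ has to be attached to the correct vector, one can simply swap $e$ and $f'$ before normalising. Swapping changes the sign of $\phi$, and this sign is absorbed into the unit $u$.
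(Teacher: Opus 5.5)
Your proof is correct, and it takes a different route from the paper.

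The paper's argument runs as follows:
\begin{enumerate}
\item Pick $m\in N$ of maximal order $c$ and write $m=ae$ with $d=ac$.
\item Complete $e$ to a symplectic basis $(e,f)$ by the duality argument of \Cref{lem:structure-sympl}.
\item Let $b$ be the minimal positive integer with $bf\in N$.
\item Show $N=\langle ae,bf\rangle$ by an order computation.
\end{enumerate}

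You instead use \Cref{prop-Smith} as a black box. You combine it with the rank-two fact that $\phi$ is a determinant. Hence any basis becomes symplectic after rescaling one vector by a unit, and this rescaling does not change the cyclic submodules.

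Your route is shorter, and it settles automatically two points that the paper leaves implicit:
\begin{itemize}
\item Your $e$ comes out primitive. In the paper, writing $m=ae$ requires choosing $e$ of order $d$, which is not automatic.
\item The smaller invariant factor sits on $f$, so you get $b\mid a$ exactly as stated. As written, the paper's choice of $ae$ of maximal order actually forces $a\mid b$: every $yf\in N$ is killed by $c$, so $a\mid y$. To reach the statement's orientation one must swap $(e,f)\mapsto(-f,e)$.
\end{itemize}

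The paper's argument has its own advantages. It is self-contained, and it is the same mechanism used in \Cref{lem:structure-sympl} and \Cref{prop:classif-Lang}, which works in any rank. Your normalisation trick is special to rank two. In a free symplectic module of rank $2n>2$, a Smith-adapted basis cannot in general be made symplectic just by rescaling its vectors.
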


\begin{proof}
	Let $m$ be an element of maximal order in $N$ and let $c$ be its order. Write $d=ac$. Since the module $S_d$ is free, we can write $m=ae$ for some $e\in S_d$. By the same argument as in the proof of \Cref{lem:structure-sympl}, we can construct an element $f\in S_d$ such that $\phi(e,f)=1$. Then $(e,f)$ is a symplectic basis of $S_d$.
	
	Let $b$ be the minimal positive integer such that $bf\in N$. We clearly have $d\mid ab$ and $b\mid a$. Let us show that we have $N=\langle ae,bf\rangle$.
	
	Assume that we have $xe+yf\in N$ with some integers $x,y$. Since the order of $xe+yf$ divides $c$, we have $c(xe+yf)=0$ and thus $d\mid cx$, which implies $a\mid x$. Then we have $xe\in N$ and hence $yf\in N$, which implies $b\mid y$. Thus we have $xe+yf\in \langle ae,bf\rangle$.
\end{proof}

The proof of \Cref{prop:class-Lagr} in \cite{albouy2008matrix} is algorithmic. For consistency reasons, we prefer to reformulate it in a different form and to give a more direct module-theoretic proof.

\begin{prop}
	\label{prop:classif-Lang}
	Assume that $L\subset M$ is Lagrangian. Then we can choose a decomposition $M=M_1\oplus M_2\oplus \ldots \oplus M_n$ of $M$ into an orthogonal direct sum of $2$-dimensional free symplectic submodules (i.e., each factor is isomorphic to $S_d$) such that for each $r\in [1;n]$ the submodule $L\cap M_r\subset M_r$ is Lagrangian in $M_r$ (and then automatically for cardinality reasons we have $L=\oplus_{r=1}^n L\cap M_r$). 
\end{prop}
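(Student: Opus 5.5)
Induction on $n$, the half-rank of $M$, peeling off one copy of $S_d$ at a time. The goal at each step is an orthogonal splitting $M=M_1\oplus M_1^\perp$ with $M_1\simeq S_d$, $L\cap M_1$ Lagrangian in $M_1$, and $L=(L\cap M_1)\oplus(L\cap M_1^\perp)$. Given this, $L\cap M_1^\perp$ is isotropic in $M_1^\perp$. It is also Lagrangian by cardinality: $\#(L\cap M_1^\perp)=\#L/\#(L\cap M_1)=d^{n}/d=d^{n-1}=\sqrt{\#M_1^\perp}$. By \Cref{coro:free-Smith}, $M_1^\perp$ is free, so induction applies to it.

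To build $M_1$, pick $m\in L$ of maximal order $c$ and write $d=ac$. As in \Cref{lem:isotr-in-Sd}, freeness of $M$ lets us write $m=ae$ with $e\in M$, and $e$ can be chosen with order $d$. Indeed, take a basis in which $m$ has coordinates $(x_i)$; the gcd of the $x_i$ with $d$ is $a$, because the order is $c$. Then $m=ae$ with $e$ unimodular, so $\langle e\rangle$ is free of rank one and hence a direct factor.

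Next we need $f$ with $\phi(e,f)=1$, chosen so that $bf\in L$ for $b=d/a$. The choice of $f$ is the crux. Since $L=L^\perp$, it is natural to look inside $L$ for an element $\ell$ with $\phi(e,\ell)=b$. To find it, consider the linear form $\phi(e,\cdot)$ on $L$; its image is a subgroup $t\bbZ_d$ of $\bbZ_d$. We have $a\phi(e,\ell)=\phi(m,\ell)=0$ for $\ell\in L$, so the image lies in $b\bbZ_d$. Maximality of $c$ should force equality. If the image were strictly smaller, then $e$ would pair with $L$ into $b'\bbZ_d$ with $b\mid b'$, $b'\neq b$. Then $(d/b')e$ would be orthogonal to $L$, hence lie in $L^\perp=L$. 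But its order is $b'$, and $b'>b$. This contradicts maximality provided $b\geqslant c$ is known, or more directly provided we compare with the order $c$ of $m$. This comparison is the delicate point, and I expect it to be the main obstacle. One fixes it by noting that $a\mid d\mid a^2$ should hold, i.e.\ $b\mid a$, $c\geqslant b$. Then $(d/b')e\in L$ has order $b'$, and we need $b'\leqslant c$ to be violated. I would instead argue by choosing $e$ via the element of $L$ generating the largest cyclic piece, using the Smith normal form of $L$ in $M$ (\Cref{prop-Smith}). That normal form makes $m=a_1e_1$ with $a_1$ minimal, so $e$ pairs with $L$ exactly onto the expected ideal.

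Granting $\ell\in L$ with $\phi(e,\ell)=b$, write $\ell=bf'$ if possible. Then $\phi(e,f')\equiv 1$ modulo $a$, and we adjust $f'$ by a suitable element to get $f$ with $\phi(e,f)=1$ and $bf\in L$. Set $M_1=\langle e,f\rangle\simeq S_d$. It is symplectic, so $M=M_1\oplus M_1^\perp$ by \Cref{rk:orth-sympl}. By construction $L\cap M_1\supset\langle ae,bf\rangle$, which is Lagrangian in $M_1$ since $ab=d$ gives cardinality $d$.

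It remains to show $L=(L\cap M_1)\oplus(L\cap M_1^\perp)$. Take $x\in L$ and project it: $x=\alpha e+\beta f+y$ with $y\in M_1^\perp$. Pairing with $ae,bf\in L$ gives $\phi(ae,x)=a\beta=0$ and $\phi(bf,x)=-b\alpha=0$. Hence $b\mid\beta$ and $a\mid\alpha$, so $\alpha e+\beta f\in L$ and therefore $y\in L$. This closes the induction.
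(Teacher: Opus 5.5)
Your skeleton matches the paper's proof: induction on $n$, the block $M_1=\langle e,f\rangle\simeq S_d$ built from an element $m=ae$ of maximal order in $L$, the splitting of $L$ by pairing with $ae$ and $bf$, and the cardinality count for the induction. Those parts are fine.

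There is a genuine gap at the step you call the crux: producing $f$ with $\phi(e,f)=1$ and $bf\in L$. As written, this step is not proved, for three reasons.
\begin{enumerate}
\item You leave $\phi(e,L)=b\bbZ_d$ in doubt and propose replacing your argument with an unexplained appeal to the Smith normal form.
\item The relation $a\mid d\mid a^2$ (equivalently $b\mid a$) that you invoke is backwards. In your notation one has $a^2\mid d$, i.e.\ $a\mid b$; for a free Lagrangian, $a=1$ and $b=d$.
\item The step ``write $\ell=bf'$ if possible'' is unjustified and fails for a general $\ell$. Take $d=8$, $M=S_8\oplus S_8$ with symplectic basis $(e_1,e_2,f_1,f_2)$, $L=\langle 4e_1,2f_1,4e_2,2f_2\rangle$ and $m=2f_1$, so $c=4$, $a=2$, $e=f_1$, $b=4$. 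Then $\ell=4e_1+2f_2\in L$ satisfies $\phi(e,\ell)=-4=4=b$ in $\bbZ_8$, yet $\ell\notin 4M$. Nothing in your argument selects an $\ell$ lying in $bM$.
\end{enumerate}

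The missing observation is that $b=d/a=c$. Since $c$ is the maximal order of an element of the finite abelian group $L$, it is the exponent of $L$, so $c\ell=0$ for all $\ell\in L$. Take \emph{any} $f$ with $\phi(e,f)=1$; it exists because $e$ has order $d$, by \Cref{lem:star-surj} and nondegeneracy of $\phi$. For every $\ell\in L$ we then get $\phi(bf,\ell)=\phi(f,c\ell)=0$, so $bf\in L^\perp=L$. No search for $\ell$ is needed. This is exactly the paper's argument, written there as $af\in L$ with the letters $a,b$ exchanged.

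Note also that, since $b=c$, your contradiction argument for $\phi(e,L)=b\bbZ_d$ was already complete: $(d/b')e\in L$ would have order $b'>c$. With the observation above, that claim is not needed at all.

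A minor point: $\gcd(x_1,\dots,x_n,d)=a$ only makes $(x_i/a)$ unimodular modulo $d/a$, not modulo $d$. To get $e$ of order $d$ you must adjust the coordinates by multiples of $d/a$, or use the Smith normal form of $\langle m\rangle$. With these repairs your proof coincides with the paper's.
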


\begin{proof}
	Let $m$ be an element of maximal order in $L$ and let $a$ be its order. Write $d=ab$. Since the module $M$ is free, we can write $m=be$ for some $e\in M$. By the same argument as in the proof of \Cref{lem:structure-sympl}, we can construct an element $f\in M$ such that $\phi(e,f)=1$.
	
	Since $\phi(e,f)=1$, the elements $e$ and $f$ are clearly $\Zd$-linearly independent. Then the submodule $N=\langle e,f\rangle$ of $M$ generated by $e$ and $f$ is free and symplectic. 
	
	We claim that we have $af\in L$. Indeed, since the orders of the elements of $L$ are divisors of $a$, for each $l\in L$ we have $\phi(f,l)\in b\bbZ/d\bbZ\subset \bbZ/d\bbZ=\Zd$ and then we have $\phi(af,l)=0$ for each $l\in L$. This shows $af\in L^\perp=L$.
	
	We see that $L\cap N$ contains $\langle be,af\rangle$, which is a Lagrangian of $N$. Then we must have an equality $L\cap N=\langle be,af\rangle$ because $L\cap N$ is isotropic. 
	
	Now we claim that we have $L=(L\cap N)\oplus (L\cap N^\perp)$. Take $l\in L$ and consider the decomposition $l=l_1+l_2$ such that $l_1\in N$ and $l_2\in N^\perp$. Since $L\cap N$ is a Lagrangian in $N$, we have $(L\cap N)^\perp \cap N=L\cap N$. On the other hand, we know that $l_1=l-l_2\in (L\cap N)^\perp\cap N$, and then $l_1$ must be an element of $L$. Then $l_2=l-l_1$ must also be an element of $L$. Hence we have $l=l_1+l_2$ with $l_1\in N\cap L$ and $l_2\in L\cap N^\perp$. This shows $L=(L\cap N)\oplus (L\cap N^\perp)$.
	
	The submodule $L\cap N^\perp\subset N^\perp$ is Lagrangian for cardinality reasons. Now it remains to apply induction to the free symplectic module $N^\perp$ with the Lagrangian $L\cap N^\perp$.  
\end{proof}

\bigskip
\addtocontents{toc}{{\textbf{Part I: The free case}}}
\centerline{\LARGE Part I: The free case}

\section{Free stabiliser formalism for qudits}
\label{sec:free-stab}

When we work with abelian groups, we usually view them as $\bbZ$-modules or as $\Zd$-modules. In this case, we use additive notation and denote the neutral element by $0$. When we work with general groups (typically non-commutative), we use multiplicative notation and denote the neutral element by $1$. We may also write $I$ instead of $1$ when the identity element is the identity linear operator. For example, the neutral element of the Pauli group $\calP_n$ is denoted by $I$.

For example, the Pauli group $\calP_n$ is not commutative, so we use multiplicative notation for it. However, the quotient $\calP_n/\langle\zeta\rangle$ of the Pauli group by its center is commutative. We will identify it with a (symplectic) $\Zd$-module $\sM$, and we use additive notation for $\sM$. For a commutative subgroup $H\subset \calP_n$ without non-trivial scalars, the subgroup $H$ is isomorphic to its image in $\sM$. However, we use multiplicative notation for $H$ itself and additive notation for its image in $\sM$.

\subsection{Qudits}
\label{subs:qudits}
Qubits are described mathematically by the vector space $\bbC^2$ and two matrices acting on it:
$$
X=
\begin{pmatrix}
0 & 1 \\
1 & 0
\end{pmatrix},
\qquad
Z=
\begin{pmatrix}
1 & 0 \\
0 & -1
\end{pmatrix}.
$$
They satisfy $X^2=Z^2=I$ and $XZ=-ZX$. Note that $(I,X,Z,XZ)$ is a basis of $\End(\bbC^2)$.

Now, qudits generalise this notion. Let $d$ be a positive integer (we recover qubits when $d=2$). Let us write 
$$\bar d=
\begin{cases}
d &\mbox{ if $d$ is odd},\\
2d &\mbox{ if $d$ is even}.
\end{cases}
$$
Let $\zeta$ be a $\dbar$-th primitive root of $1$ and set $\xi=\zeta^2$. Then $\xi$ is a primitive $d$-th root of $1$.
Consider the vector space $\bbC^d$ with the following two matrices acting on it:
$$
X=
\begin{pmatrix}
0 & 0 & \cdots & 0 & 1 \\
1 & 0 & \cdots & 0 & 0 \\
0 & 1 & \cdots & 0 & 0 \\
\vdots &  & \ddots &  & \vdots \\
0 & 0 & \cdots & 1 & 0
\end{pmatrix},
\qquad
Z=
\begin{pmatrix}
1 & 0 & 0 & \cdots & 0 \\
0 & \xi & 0 & \cdots & 0 \\
0 & 0 & \xi^2 & \cdots & 0 \\
\vdots &  &  & \ddots & \vdots \\
0 & 0 & 0 & \cdots & \xi^{d-1}
\end{pmatrix}.
$$

It is convenient to think about this in the following way. The vector space $\bbC^d$ has a basis $\{v_i,i\in\Zd\}$ and we have $X(v_i)=v_{i+1}$ and $Z(v_i)=\xi^i v_i$. Then it is easy to see that they satisfy $X^d=Z^d=I$ and $ZX=\xi XZ$. Moreover, the set $\{X^aZ^b \mid a,b\in \Zd\}$ is a basis of $\End(\bbC^d)$.

\subsection{Qudit Pauli group}
\label{subs:Pauli}

\begin{df}
	The \emph{Pauli group} for one qudit is the subgroup $\calP_1\subset GL(\bbC^d)$ generated by $X$ and $Z$ and by the operator acting by multiplication by $\zeta$. (Abusing the notation, we will also denote this element of the Pauli group by $\zeta$.)
\end{df}

The notation for $\zeta$ may however be confusing when we consider the group algebra $\bbC\calP_1$. In this case we may write $\zeta_{\calP_1}$ and $\zeta_\bbC$ with subscripts to distinguish between the element in the Pauli group and the element of the field $\bbC$.

\begin{rk}
	The operator $ZXZ^{-1}X^{-1}$ acts on $\bbC^d$ by multiplication by $\xi$. So, it may seem more natural to define the Pauli group with only $\xi$ and without $\zeta$. One of the reasons why $\zeta$ is usually added is that without it the surjectivity in \Cref{prop:surj} would fail, see \Cref{rk:surj-fails}. Another reason for doing that will become clear later in this paper when we look at the Pauli group from the point of view of Heisenberg extensions. Without adding $\zeta$, the Pauli group would not satisfy $(c)$ in \Cref{def:Heis}.
\end{rk}

\begin{rk}
	The basis statement at the end of \Cref{subs:qudits} can be strengthened to the following isomorphism of algebras:
	$$
	\bbC \calP_1/(\zeta_{\calP_1}-\zeta_{\bbC})\simeq\End(\bbC^d).
	$$
\end{rk}

Let $n$ be a positive integer. We now give a generalisation of this notion for $n$ qudits. Set $V=(\bbC^d)^{\otimes n}$. For $r\in [1;n]$ denote by $X_r$ and $Z_r$ the operators $X$ and $Z$ acting on the $r$-th qudit, i.e., in other words we have 
$$
X_r=I^{\otimes {r-1}}\otimes X\otimes I^{\otimes {n-r}}, \quad Z_r=I^{\otimes {r-1}}\otimes Z\otimes I^{\otimes {n-r}}.
$$ 

\begin{df}
	The \emph{Pauli group} (for $n$ qudits) is the subgroup $\calP_n\subset GL((\bbC^d)^{\otimes n})$ generated by $X_r$ and $Z_r$ for $r\in [1;n]$ and by the operator acting by multiplication by $\zeta$. (Abusing the notation, we also denote this operator by $\zeta$.)
\end{df}

\begin{rk}
	\label{rk:els-of-Pn}
	Since we have $\End((\bbC^d)^{\otimes n})\simeq \End(\bbC^d)^{\otimes n}$, the algebra $\End((\bbC^d)^{\otimes n})$ has the following basis
	$$
	\{X_1^{a_1}Z_1^{b_1}\ldots X_n^{a_n}Z_n^{b_n},\, a_r,b_r\in [0;d-1]\}.
	$$
	On the other hand, we have 
	\begin{equation}
	\label{eq:Pauli-elements}
	\calP_n=\{\zeta^c X_1^{a_1}Z_1^{b_1}\ldots X_n^{a_n}Z_n^{b_n},\, a_r,b_r\in [0;d-1],c\in[0;\dbar-1]\}.
	\end{equation}
	This shows immediately that we have an isomorphism of algebras 
	$$
	\bbC\calP_n/(\zeta_{\calP_n}-\zeta_{\bbC})\simeq \End((\bbC^d)^{\otimes n}).
	$$
\end{rk}

\begin{coro}
	\label{coro:Pn-gen+rel}
	The group $\calP_n$ can be defined by generators and relations in the following way. The generators are $Z_1,\ldots,Z_n,X_1,\ldots,X_n,\zeta$ and the relations are
	$$
	\zeta^{\dbar}=1,\qquad X_k\zeta=\zeta X_k,\qquad Z_k\zeta=\zeta Z_k,\qquad X_kX_r=X_rX_k,\qquad Z_kZ_r=Z_rZ_k,
	$$
	$$
	X_kZ_r=Z_rX_k \mbox{ \rm(if $r\ne k$)}, \qquad X_kZ_k=\zeta^2Z_kX_k,\qquad X_k^d=1, \qquad Z_k^d=1.
	$$
\end{coro}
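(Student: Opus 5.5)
The plan is the standard comparison argument: compare the abstract group $G$ defined by the stated generators and relations with $\calP_n$, using the canonical surjection $G\to\calP_n$ and a cardinality bound on $G$. Let $G$ be the abstract group with generators $\tilde Z_1,\ldots,\tilde Z_n,\tilde X_1,\ldots,\tilde X_n,\tilde\zeta$ subject to the listed relations.

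\textbf{Step 1: the relations hold in $\calP_n$.} I first check that the listed relations are satisfied by the corresponding operators in $\calP_n$.
\begin{itemize}
\item $\zeta^{\dbar}=1$ holds because $\zeta$ is a $\dbar$-th root of unity.
\item $\zeta$ is a scalar, so it commutes with everything.
\item Operators acting on different tensor factors commute.
\item $X^d=Z^d=I$ is clear from the action on the basis $v_i$.
\item The single-qudit relation $ZX=\xi XZ$ from \Cref{subs:qudits} gives $X_kZ_k=\xi^{-1}Z_kX_k$ on each factor. This has to be matched with the stated relation $X_kZ_k=\zeta^2Z_kX_k$. The sign convention must be checked against $ZX=\xi XZ$. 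If it disagrees, I replace $\zeta$ by $\zeta^{-1}$, which is also a primitive $\dbar$-th root; this is harmless and only fixes the convention.
\end{itemize}
Hence there is a well-defined homomorphism $G\to\calP_n$ sending generators to generators. It is surjective because $\calP_n$ is by definition generated by these elements.

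\textbf{Step 2: bound $\#G$.} I show $\#G\leqslant \dbar\, d^{2n}$. Using the relations, every word in the generators can be rewritten in the normal form $\tilde\zeta^{c}\tilde X_1^{a_1}\tilde Z_1^{b_1}\cdots \tilde X_n^{a_n}\tilde Z_n^{b_n}$:
\begin{itemize}
\item $\tilde\zeta$ is central, so it can be moved to the front.
\item Generators with different indices commute, so they can be grouped by index.
\item Within a fixed index $k$, the relation $\tilde X_k\tilde Z_k=\tilde\zeta^2\tilde Z_k\tilde X_k$, read as $\tilde Z_k\tilde X_k=\tilde\zeta^{-2}\tilde X_k\tilde Z_k$, lets me move all $\tilde X_k$'s to the left of the $\tilde Z_k$'s at the cost of a central power of $\tilde\zeta$.
\item The exponents reduce modulo $d$, $d$ and $\dbar$ respectively.
\end{itemize}
Formally, the set $S$ of such normal-form elements contains the generators and their inverses and is closed under right multiplication by each generator. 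Therefore $S=G$, and $\#G\leqslant \dbar\, d^{2n}$.

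\textbf{Step 3: compare cardinalities.} By \Cref{rk:els-of-Pn}, equation \eqref{eq:Pauli-elements}, the elements of $\calP_n$ are exactly the $\zeta^cX_1^{a_1}Z_1^{b_1}\cdots X_n^{a_n}Z_n^{b_n}$, and these are pairwise distinct. Indeed, the monomials $X_1^{a_1}Z_1^{b_1}\cdots$ form a basis of $\End(V)$, so two such elements coincide only if the exponents $a_r,b_r$ agree and the scalar factors agree. Since $\zeta$ has order exactly $\dbar$, the scalar factors agree only if the $c$'s agree. Thus $\#\calP_n=\dbar\, d^{2n}\geqslant\#G$. A surjection from a finite set onto a set of at least the same size is a bijection, so $G\to\calP_n$ is an isomorphism.

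\textbf{Main obstacle.} The only delicate points are the closure argument for the normal form and the exact count $\#\calP_n=\dbar\,d^{2n}$. The count requires that distinct $c\in[0;\dbar-1]$ give distinct scalars, which uses primitivity of $\zeta$ together with the linear independence of the monomials. Both steps are routine.
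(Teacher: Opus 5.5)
Your argument is correct and is essentially the paper's proof. The paper also uses the canonical surjection onto $\calP_n$, rewrites every element of the presented group in the normal form $\zeta^cX_1^{a_1}Z_1^{b_1}\cdots X_n^{a_n}Z_n^{b_n}$, and gets distinctness of these elements in $\calP_n$ from \Cref{rk:els-of-Pn}. Your cardinality comparison is just a rephrasing of the paper's injectivity step.

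You should settle the sign check in Step 1 rather than leave it conditional, and your proposed repair needs adjusting.

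\emph{What actually holds.} With the conventions of \Cref{subs:qudits} one has $ZX=\xi XZ$. So in $\calP_n$ the relation is $Z_kX_k=\zeta^2X_kZ_k$, i.e.\ $X_kZ_k=\zeta^{-2}Z_kX_k$. This agrees with \Cref{lem:furmula-commut} and with the Heisenberg relations behind \Cref{lem:Heis=Pauli}. Hence for $d\geqslant 3$ the displayed relation $X_kZ_k=\zeta^2Z_kX_k$ is a sign slip in the statement, and the paper's proof passes over it.

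\emph{Why your repair fails.} Re-choosing the primitive root globally, $\zeta\mapsto\zeta^{-1}$, does not fix this. Since $\xi=\zeta^2$ and $Z$ is defined through $\xi$, the change also replaces $Z$ by $Z^{-1}$. The identity $ZX=\xi XZ$ then still holds with the new $\xi$, so the mismatch persists.

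\emph{How to fix it.}
\begin{itemize}
\item The clean fix is to correct the relation to $Z_kX_k=\zeta^2X_kZ_k$.
\item If you want to keep the relation as printed, send the abstract generator $Z_k$ to $Z_k^{-1}$. The relations then hold, since $XZ^{-1}=\xi Z^{-1}X$, and $\zeta$ is still sent to $\zeta$.
\item Sending the abstract $\zeta$ to the scalar $\zeta^{-1}$ also gives an isomorphism of groups. However, it does not fix $\zeta$, and that is what the later $\zeta$-group arguments use.
\end{itemize}

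Steps 2 and 3 do not depend on this sign and go through unchanged.
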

\begin{proof}
	Denote by $\calP'_n$ the group generated by the generators and relations above. Since all these relations are satisfied in the Pauli group, we have a group homomorphism $\calP'_n\to \calP_n$ sending the generators to the elements with the same names. Since $\calP_n$ is generated by $Z_1,\ldots,Z_n,X_1,\ldots,X_n,\zeta$, this homomorphism is surjective. 
	
	Let us check the injectivity. It is clear that, using the relations, we can write each element of $\calP'_n$ as $\zeta^c X_1^{a_1}Z_1^{b_1}\ldots X_n^{a_n}Z_n^{b_n}$ with $a_r,b_r\in [0;d-1]$ and $c\in[0;\dbar-1]$. However, by \Cref{rk:els-of-Pn}, all these elements go to different elements of $\calP_n$. This implies the injectivity.
	
\end{proof}

\subsection{Qudit Clifford group}

\begin{df}
	The \emph{qudit Clifford group} $Cl_n$ is the normaliser of the Pauli group $\calP_n$ in the unitary group $U((\bbC^d)^{\otimes n})$ quotiented by scalars.
\end{df}

\begin{rk}
	We give below the definition of the Clifford group more common in the quantum information literature. However, as explained in \Cref{subs:descr-Heis-symp}, the Clifford group $Cl_n$ is nothing else than the group $\Aut_\zeta(\calP_n)$ of automorphisms of $\calP_n$ that fix $\zeta$.
\end{rk}

Let $\sM$ be a free $2n$-dimensional $\Zd$-module with a basis $(z_1,\ldots,z_n,x_1,\ldots,x_n)$.
Any two elements of the Pauli group $\calP_n$ commute up to multiplication by a power of $\xi$, this means that the group becomes commutative when we quotient it by the subgroup $\langle\zeta\rangle$ generated by $\zeta$. Moreover, the order of each element in the quotient is a divisor of $d$, so the quotient can be considered as a $\Zd$-module. We get the following obvious lemma.

\begin{lem}
	\label{lem:isom-quot-Pauli}
	We have the following isomorphism of $\Zd$-modules
	$$
	\calP_n/\langle\zeta\rangle\simeq\sM,\qquad Z_r\mapsto z_r,\, X_r\mapsto x_r.
	$$
\end{lem}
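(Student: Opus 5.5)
We construct the map explicitly and check its three properties in turn. Using \Cref{coro:Pn-gen+rel}, define a homomorphism $\pi\colon\calP_n\to\sM$ on generators by $Z_r\mapsto z_r$, $X_r\mapsto x_r$ and $\zeta\mapsto 0$. To see this is well defined, we check that every defining relation of \Cref{coro:Pn-gen+rel} holds in the abelian group $\sM$, written additively. Since $\sM$ is abelian, all the commutation relations become trivial. The relation $X_kZ_k=\zeta^2Z_kX_k$ becomes $x_k+z_k=0+z_k+x_k$. The relations $X_k^d=Z_k^d=1$ and $\zeta^{\dbar}=1$ become $dx_k=dz_k=0$ and $\dbar\cdot 0=0$, which hold because $\sM$ is a $\Zd$-module. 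So $\pi$ is a well-defined group homomorphism.

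Next, $\pi$ is surjective, since its image contains the basis $(z_1,\ldots,z_n,x_1,\ldots,x_n)$.

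It remains to identify the kernel. Since $\pi(\zeta)=0$, we have $\langle\zeta\rangle\subset\Ker\pi$. For the reverse inclusion, use the normal form \eqref{eq:Pauli-elements}: every element of $\calP_n$ can be written as $\zeta^c X_1^{a_1}Z_1^{b_1}\cdots X_n^{a_n}Z_n^{b_n}$ with $a_r,b_r\in[0;d-1]$. Its image is $\sum_r (a_rx_r+b_rz_r)$. Because the $x_r,z_r$ form a $\Zd$-basis of $\sM$, this image vanishes only when all $a_r=b_r=0$ in $\Zd$, that is, only when the element is $\zeta^c$. Hence $\Ker\pi=\langle\zeta\rangle$, and $\pi$ induces the claimed isomorphism $\calP_n/\langle\zeta\rangle\simeq\sM$.

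As an alternative, one could count instead of computing the kernel: $\#\calP_n=\dbar d^{2n}$ and $\#\langle\zeta\rangle=\dbar$, so the quotient has $d^{2n}=\#\sM$ elements, and the surjection $\calP_n/\langle\zeta\rangle\to\sM$ is then bijective. The isomorphism respects the $\Zd$-module structure automatically, because, as noted earlier in the paper, being a $\Zd$-module is a property of an abelian group. No step here is a real obstacle; the only point needing care is verifying the relation $X_kZ_k=\zeta^2Z_kX_k$, which holds in $\sM$ precisely because $\zeta\mapsto 0$.
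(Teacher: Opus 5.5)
Your proof is correct and is essentially the argument the paper has in mind. The paper gives no written proof: it calls the lemma obvious after noting that $\calP_n/\langle\zeta\rangle$ is abelian (commutators are powers of $\xi$) with exponent dividing $d$. Your construction of the map via the presentation in \Cref{coro:Pn-gen+rel}, followed by the kernel computation using the normal form \eqref{eq:Pauli-elements}, just makes this explicit; the kernel step uses only the existence of that normal form, while your counting alternative also uses its uniqueness ($\#\calP_n=\dbar d^{2n}$, from \Cref{rk:els-of-Pn}).
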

Denote by $\tau$ the natural homomorphism $\tau\colon \calP_n \to \sM$ defined as the quotient map $\calP_n \to \calP_n/\langle \zeta \rangle$ composed with the isomorphism above.
Consider a symplectic form $\phi\colon \sM\times \sM\to  \Zd$ given by
$$
\phi(z_i,z_j)=\phi(x_i,x_j)=0,\qquad \phi(z_i,x_j)=-\phi(x_j,z_i)=\delta_{i,j},
$$
where $\delta_{i,j}$ is the Kronecker delta. The following lemma is straightforward.

\begin{lem}
	\label{lem:furmula-commut}
	For $x,y\in\calP_n$ we have $xy=\xi^{\phi(\tau(x),\tau(y))}yx$.
\end{lem}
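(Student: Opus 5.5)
The formula is checked first on generators and then extended by bimultiplicativity. Every element of $\calP_n$ has the normal form $\zeta^c X_1^{a_1}Z_1^{b_1}\ldots X_n^{a_n}Z_n^{b_n}$ from \eqref{eq:Pauli-elements}, and its image under $\tau$ is $\sum_r (b_r z_r + a_r x_r)$. The scalar $\zeta^c$ is central and does not change $\tau$, so both sides of the identity are unaffected by scalar factors. It therefore suffices to treat monomials in the $X_r, Z_r$.

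Define $c(x,y)=xyx^{-1}y^{-1}$. Since every commutator in $\calP_n$ is a central scalar (a power of $\xi$, by the relations of \Cref{coro:Pn-gen+rel}), the standard identities give
$$c(x_1x_2,y)=x_1c(x_2,y)x_1^{-1}c(x_1,y)=c(x_2,y)c(x_1,y).$$
The symmetric identity holds in the second variable. So $c$ is bimultiplicative. It also depends only on $\tau(x)$ and $\tau(y)$, since scalars are central. Hence $c$ induces a $\bbZ$-bilinear map $\sM\times\sM\to\langle\xi\rangle$. The map $(u,v)\mapsto \xi^{\phi(u,v)}$ is also bilinear and well defined, because $\xi^d=1$ and $\phi$ takes values in $\Zd$. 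Two bilinear maps agree if they agree on the basis $(z_1,\ldots,z_n,x_1,\ldots,x_n)$.

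On the basis the check is direct from \Cref{coro:Pn-gen+rel}. Pairs of $X$'s, pairs of $Z$'s, and $X_k,Z_r$ with $r\neq k$ all commute, matching $\phi=0$. For $k=r$ we have $Z_kX_k=\xi X_kZ_k$, so $c(Z_k,X_k)=\xi=\xi^{\phi(z_k,x_k)}$, and $c(X_k,Z_k)=\xi^{-1}=\xi^{\phi(x_k,z_k)}$. Since $xy=c(x,y)\,yx$, the formula $xy=\xi^{\phi(\tau(x),\tau(y))}yx$ follows.

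The only step needing care is the bimultiplicativity of $c$, which depends on the commutators being central. It is routine.
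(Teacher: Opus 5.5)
Your argument is correct, and it is exactly the routine verification that the paper leaves implicit when it calls the lemma straightforward: commutators are central scalars, so $c$ is bimultiplicative and factors through $\tau$, and it suffices to compare $c$ with $\xi^{\phi}$ on the basis $z_r,x_r$. One small correction to your citation: take the relation $Z_kX_k=\xi X_kZ_k$ from the explicit action $X(v_i)=v_{i+1}$, $Z(v_i)=\xi^i v_i$ in \Cref{subs:qudits}, because \Cref{coro:Pn-gen+rel} as printed states the reversed relation $X_kZ_k=\zeta^2Z_kX_k$, which for $d>2$ gives the opposite sign and contradicts $\phi(z_k,x_k)=1$.
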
 
The lemma above shows us that, even if the quotient of $\calP_n$ by $\langle\zeta\rangle$ makes the group commutative, the symplectic form still keeps all the information about commutation relations before taking the quotient.

Let $\Phi$ be an automorphism of $\calP_n$ coming from conjugation by an element of the Clifford group. If we have $xy=\xi^a yx$ for $x,y\in \calP_n$ then we also have $\Phi(x)\Phi(y)=\xi^a\Phi(y)\Phi(x)$. This implies that, in view of \Cref{lem:furmula-commut}, the action of $Cl_n$ on $\calP_n/\langle\zeta\rangle\simeq \sM$ by conjugation preserves the symplectic form $\phi$. In particular we get a homomorphism of groups $Cl_n\to Sp(\sM,\phi)$.

\begin{prop}
	\label{prop:surj}
	The homomorphism $Cl_n\to  Sp(\sM,\phi)$ above is surjective.
\end{prop}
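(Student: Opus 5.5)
Given $g\in Sp(\sM,\phi)$, the plan is to build a group automorphism $\Phi$ of $\calP_n$ that fixes $\zeta$ and induces $g$ on $\sM$. Then show that $\Phi$ is conjugation by some unitary $U$, which gives an element of $Cl_n$ mapping to $g$.

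\emph{Step 1: lifting $g$ to an automorphism.} By \Cref{lem:free-sympl}, $(g(z_1),\ldots,g(z_n),g(x_1),\ldots,g(x_n))$ is again a symplectic basis of $\sM$. Choose preimages $Z'_r,X'_r\in\calP_n$ under $\tau$. By \Cref{lem:furmula-commut} they satisfy all the commutation relations of \Cref{coro:Pn-gen+rel}. The remaining task is to adjust them by powers of $\zeta$ so that $(X'_r)^d=(Z'_r)^d=1$.

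This is where $\zeta$ and $\dbar$ enter, and I expect it to be the main obstacle. For $P=\zeta^cX^aZ^b$ (on a single tensor factor, or in general as a product), one computes $P^d=\zeta^{cd}\xi^{ab\,d(d-1)/2}$. For $d$ odd this equals $\zeta^{cd}=1$, since $\zeta^d=1$, so nothing needs to be done. For $d$ even, $\xi^{ab\,d(d-1)/2}=\zeta^{ab\,d(d-1)}=(-1)^{ab(d-1)}=\pm1$, and $\zeta^d=-1$. So multiplying $P$ by $\zeta$ when necessary makes $P^d=1$; a general product $\prod_r X_r^{a_r}Z_r^{b_r}$ behaves the same way, since the factors on different qudits commute. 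With these normalisations, \Cref{coro:Pn-gen+rel} yields a homomorphism $\Phi\colon\calP_n\to\calP_n$ fixing $\zeta$ and lifting $g$. It is surjective, because its image contains $\zeta$ and maps onto $\sM$, so it is an automorphism since $\calP_n$ is finite.

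\emph{Step 2: realising $\Phi$ by a unitary.} $\Phi$ induces an algebra automorphism of $\bbC\calP_n/(\zeta_{\calP_n}-\zeta_\bbC)\simeq\End(V)$ by \Cref{rk:els-of-Pn}. By Skolem--Noether, every automorphism of the matrix algebra $\End(V)$ is inner, so $\Phi(P)=UPU^{-1}$ for some $U\in GL(V)$. Alternatively, one can see this concretely: $V$ is the unique irreducible representation of $\calP_n$ on which $\zeta$ acts by $\zeta_\bbC$, so $V$ and its twist by $\Phi$ are isomorphic, and $U$ is an intertwiner.

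To make $U$ unitary, note that both $P\mapsto P$ and $P\mapsto\Phi(P)$ are unitary representations, since Pauli operators are unitary. Hence $U^*U$ commutes with all of $\calP_n$, and therefore with $\End(V)$, so $U^*U$ is a positive scalar. Rescaling $U$ makes it unitary. Then $U$ normalises $\calP_n$, its class lies in $Cl_n$, and that class maps to $g$, which proves surjectivity.
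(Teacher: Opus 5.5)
Your proof is correct and is essentially the paper's own alternative argument (\Cref{rk:proof-surj-Cl-Sp}). There, $g$ is lifted to a $\zeta$-fixing automorphism of $\calP_n\simeq Heis(\sM,\phi)$ by choosing equal-order lifts of the image of a symplectic basis (\Cref{lem:Heis-Sp-surj}, with your explicit parity adjustment playing the role of axiom $(c)$ and \Cref{lem:constr-Heis}), and is then realised by a unitary through uniqueness of the irreducible $\zeta$-representation and of the invariant inner product (\Cref{lem:Heis-symple-simple}, \Cref{lem:Aut-zetagr-irr-int}); the proof written directly under the proposition only cites \cite{hostens2005stabilizer} and points to that remark.
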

\begin{proof}
	This follows from \cite[\S II\,C]{hostens2005stabilizer}. More precisely, they claim that each $C\in Sp(\sM,\phi)$ and $h$ satisfying condition \cite[(10)]{hostens2005stabilizer} can be lifted to an element of $Cl_n$ whose image in $Sp(\sM,\phi)$ is $C$. The condition on $h$ is just a parity condition, so for each $C$ there is such an element $h$.
	
	An alternative proof of this fact in a more general setting will be given in \Cref{sec:not-free} (see \Cref{rk:proof-surj-Cl-Sp}).
\end{proof}

\begin{rk}
	\label{rk:surj-fails}
	The Pauli group was defined with an extra element $\zeta$. Let us explain why the surjectivity in the proposition above fails if we do not add it. Take $n=1$ and $d=2$. Imagine that we have defined the Pauli group without adding $\zeta$ (which is $i\in \bbC$ in the case $d=2$) and suppose that we have defined the Clifford group in terms of the smaller Pauli group. Then the Pauli group contains $\pm ZX$ (which are of order $4$) but not $\pm i ZX$ (which are of order $2$). The symplectic map in $Sp(F_1,\phi)$ given by $z_1\mapsto z_1+x_1$, $x_1\mapsto x_1$ is not in the image of the modified Clifford group because it is impossible to conjugate $X$ and get $\pm ZX$ for order reasons.
\end{rk}

\subsection{Free stabiliser formalism}
\label{subs:stab-formal}

For a subgroup $H\subset \calP_n$, let $V^H=\{v\in V|\, \forall h\in H,\, h(v)=v\}$ be the stabiliser subspace of $H$ in $V$. Denote by $N(H)$ the normaliser of $H$ in $\calP_n$.

\begin{thm}
	\label{thm:main-free-dstab}
	Let $H$ be an abelian subgroup of $\calP_n$ satisfying the following two conditions
	\begin{equation}
	\label{eq:cond-H-1}
	H\cap \{\zeta^a, a\in \bbZ\}=\{I\}.
	\end{equation}
	\begin{equation}
	\label{eq:cond-H-2}
	H\simeq \Zd^k \mbox{ for }k\in \bbZ_{\geqslant 0}.
	\end{equation}
	Then we have the following.
	
	We have $k\leqslant n$ and there is a unitary isomorphism of $V^H$ with $(\bbC^d)^{\otimes n-k}$ such that the action of $N(H)/H$ is identified with the action of $\calP_{n-k}$ on $(\bbC^d)^{\otimes n-k}$.
\end{thm}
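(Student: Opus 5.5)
The plan is to reduce, via the Clifford group, to the case where $H$ is generated by $Z_1,\ldots,Z_k$ up to scalars, and then compute everything explicitly.

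\textbf{Step 1: pass to $\sM$.} By condition \eqref{eq:cond-H-1}, the map $\tau$ restricted to $H$ is injective. So $L=\tau(H)\subset\sM$ is a free submodule of rank $k$ by \eqref{eq:cond-H-2}. Since $H$ is abelian, \Cref{lem:furmula-commut} shows that $L$ is isotropic. Since $\sM$ is free symplectic of rank $2n$, \Cref{lem:free-isotr} lets us complete a basis $(e_1,\ldots,e_k)$ of $L$ to a symplectic basis $(e_1,\ldots,e_n,f_1,\ldots,f_n)$. In particular $k\leqslant n$.

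\textbf{Step 2: normalise $H$ by a Clifford element.} By \Cref{prop:surj}, there is $C\in Cl_n$, given by a unitary $U$, whose image in $Sp(\sM,\phi)$ sends $e_i\mapsto z_i$ and $f_i\mapsto x_i$. Conjugating by $U$ is unitary on $V$, preserves $\calP_n$, and carries $V^H$, $N(H)$ and $N(H)/H$ to the corresponding objects for $UHU^{-1}$. So we may assume $\tau(H)=\langle z_1,\ldots,z_k\rangle$.

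Then each generator of $H$ has the form $h_i=\zeta^{c_i}Z_i$ for some integer $c_i$. The difficulty is that $c_i$ need not be $0$. Since $H\cap\langle\zeta\rangle=\{I\}$ and $\tau(h_i)$ has order $d$, we need $h_i^d=\zeta^{dc_i}=I$. Hence $dc_i\equiv 0 \pmod{\dbar}$, so $\zeta^{c_i}$ is a $d$-th root of unity, namely a power $\xi^{m_i}$; when $d$ is even this forces $c_i$ to be even.

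To remove these scalars, conjugate by $X_i^{m_i}$ (or its inverse). Since $X$ and $Z$ commute up to $\xi$, this replaces $\zeta^{c_i}Z_i$ by $Z_i$. We expect this scalar bookkeeping to be the main subtle point. It is exactly where the $\dbar$ versus $d$ distinction and condition \eqref{eq:cond-H-1} enter; without \eqref{eq:cond-H-1}, $V^H$ could be $0$. After this step, $H=\langle Z_1,\ldots,Z_k\rangle$.

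\textbf{Step 3: explicit computation.}
\begin{itemize}
\item $V^H=v_0^{\otimes k}\otimes(\bbC^d)^{\otimes n-k}$, which is unitarily isomorphic to $(\bbC^d)^{\otimes n-k}$ via $v\mapsto v_0^{\otimes k}\otimes v$.
\item Using \eqref{eq:Pauli-elements} and \Cref{lem:furmula-commut}, $g\in N(H)$ if and only if $\tau(g)\in L^\perp=\langle z_1,\ldots,z_n,x_{k+1},\ldots,x_n\rangle$. Indeed, $gZ_ig^{-1}$ must lie in $H$, and a scalar multiple of $Z_i$ lies in $H$ only if the scalar is $1$, which forces $\phi(\tau(g),z_i)=0$.
\item Hence $N(H)$ consists of the elements $\zeta^cZ_1^{b_1}\cdots Z_k^{b_k}\,g'$ with $g'$ a Pauli word in qudits $k+1,\ldots,n$.
\item So $N(H)/H$ is isomorphic to $\calP_{n-k}$, the subgroup generated by $\zeta,X_r,Z_r$ for $r>k$, and the isomorphism respects the relations of \Cref{coro:Pn-gen+rel}.
\end{itemize}

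Finally, $H$ acts trivially on $V^H$, and each $g'$ acts on $v_0^{\otimes k}\otimes v$ as $v_0^{\otimes k}\otimes g'v$. Therefore the action of $N(H)/H$ on $V^H$ is identified with the standard action of $\calP_{n-k}$ on $(\bbC^d)^{\otimes n-k}$.
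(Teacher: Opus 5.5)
Your proof is correct and follows essentially the same route as the paper. Both reduce via \Cref{lem:free-isotr} and \Cref{prop:surj} to $\tau(H)=\langle z_1,\ldots,z_k\rangle$, clear the residual scalars by conjugating with powers of the $X_i$, and read off $V^H=v_0^{\otimes k}\otimes(\bbC^d)^{\otimes n-k}$ and $N(H)$ as the preimage of $\langle z_1,\ldots,z_k\rangle^\perp$. Your explicit check that $h_i^d=\zeta^{dc_i}$ must be trivial, so that the leftover scalars are powers of $\xi$ rather than of $\zeta$, fills in a point that the paper's proof of \Cref{prop:conj-Zs} leaves implicit.
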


Before proving the theorem, we start with some preparation.

\begin{rk}
	\label{rk:norm=centr}
	For each commutative subgroup $H\subset \calP_n$ satisfying \eqref{eq:cond-H-1}, its normaliser is the same as its centraliser. 
	Moreover, for $x\in\calP_n$, the centraliser of $xHx^{-1}$ in $\calP_n$ must be $xN(H)x^{-1}$. But since the elements of $xHx^{-1}$ and $H$ are the same up to powers of $\xi$, the centralisers must be the same. Hence we have $xN(H)x^{-1}=N(H)$. This means that $N(H)$ is normal in $\calP_n$.
	
Moreover, since the kernel of the map $\tau\colon\calP_n\to \sM$ is given by scalars, the composition $H\to \calP_n\to \sM$ is injective by \eqref{eq:cond-H-1}. Then $H$ is isomorphic to $\tau(H)$. This implies in particular that $H$ may only contain elements whose orders divide $d$. (While $\calP_n$ itself contains elements of order $\dbar$.) This allows to see $H$ as a $\Zd$-module.
\end{rk}

\begin{prop}
	\label{prop:conj-Zs}
	If the subgroup $H\subset \calP_n$ satisfies the conditions in \Cref{thm:main-free-dstab}, then $k\leqslant n$ and $H$ is conjugate via an element of the Clifford group $Cl_n$ to the subgroup of $\calP_n$ generated by $Z_1,\dots,Z_k$.
\end{prop}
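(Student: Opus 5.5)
Since $H$ satisfies \eqref{eq:cond-H-1}, \Cref{rk:norm=centr} shows that $\tau$ restricts to an isomorphism $H\simeq\tau(H)$. So $L=\tau(H)\subset\sM$ is a submodule isomorphic to $\Zd^k$, hence free. By \Cref{lem:furmula-commut}, commutativity of $H$ means $\xi^{\phi(\tau(x),\tau(y))}=1$ for $x,y\in H$. Since $\xi$ is a primitive $d$-th root of unity, this gives $\phi(\tau(x),\tau(y))=0$ in $\Zd$, so $L$ is a free isotropic submodule of $\sM$.

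Choose a basis $(\ell_1,\ldots,\ell_k)$ of $L$. By \Cref{lem:free-isotr} it extends to a symplectic basis $(\ell_1,\ldots,\ell_n,m_1,\ldots,m_n)$ of $\sM$; in particular $k\leqslant n$, because the $\ell_i$ are part of a basis of rank $2n$ with $n$ isotropic vectors $e_i$. The assignment $z_i\mapsto \ell_i$, $x_i\mapsto m_i$ sends the standard symplectic basis to another symplectic basis, so it defines an element $C\in Sp(\sM,\phi)$. By \Cref{prop:surj}, $C$ lifts to some $g\in Cl_n$. Conjugation by $g$ (or by $g^{-1}$, depending on conventions) is an automorphism $\Phi$ of $\calP_n$ fixing scalars with $\tau\circ\Phi=C\circ\tau$. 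Then $\tau(\Phi(Z_i))=\ell_i$ for $i\leqslant k$, so $\Phi(Z_i)=\zeta^{c_i}h_i$, where $h_i\in H$ is the unique element with $\tau(h_i)=\ell_i$.

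The remaining issue, which I expect to be the main obstacle, is the scalar discrepancy $\zeta^{c_i}$: the lift only controls elements up to the centre. To remove it, first note that $h_i^d=I$, since $H\simeq\Zd^k$ and the order of $h_i$ divides $d$. Also $Z_i^d=I$, so $\zeta^{c_id}=1$, which forces $c_i$ to be even when $d$ is even, i.e. $\zeta^{c_i}\in\langle\xi\rangle$.

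Write $\zeta^{c_i}=\xi^{t_i}$. Because $(\ell_j,m_j)$ is a symplectic basis, I can pick $y_i\in\calP_n$ with $\tau(y_i)=m_i$. Conjugating by $y_i$ multiplies $\Phi(Z_j)$ by $\xi^{\pm\phi(\ell_j,m_i)}=\xi^{\pm\delta_{ij}}$, which by \Cref{lem:furmula-commut} changes only the $i$-th generator. Conjugating by $\prod_i y_i^{\mp t_i}$ therefore kills all the scalars simultaneously. Composing $g$ with this Pauli element, which lies in the normaliser and hence gives an element of $Cl_n$, yields a Clifford element sending $Z_i$ to $h_i$ for $i\leqslant k$. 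It thus conjugates $\langle Z_1,\ldots,Z_k\rangle$ onto $H$, and its inverse conjugates $H$ to $\langle Z_1,\ldots,Z_k\rangle$.
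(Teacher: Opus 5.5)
Your proof is correct and follows essentially the same route as the paper. You identify $\tau(H)$ as a free isotropic submodule, extend a basis to a symplectic basis via \Cref{lem:free-isotr}, lift the resulting symplectic map to $Cl_n$ via \Cref{prop:surj}, and remove the residual scalars by conjugating with a suitable Pauli element. The paper uses $X_1^{a_1}\cdots X_k^{a_k}$ after moving $H$ to the $Z$-side, whereas you use lifts of the $m_i$ before inverting; your explicit check that the scalar discrepancy lies in $\langle\xi\rangle$ rather than merely $\langle\zeta\rangle$ is a detail the paper leaves implicit.
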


\begin{proof}

	By \eqref{eq:cond-H-1}, the group $H$ is isomorphic to its image $\tau(H)$ (see \Cref{rk:norm=centr}). Next, condition \eqref{eq:cond-H-2} implies that $\tau(H)$ is a free $\Zd$-submodule of $\sM$ of rank $k$. Moreover, by \Cref{lem:furmula-commut}, since $H$ is commutative, the submodule $\tau(H)\subset \sM$ is isotropic in the sense of \Cref{df:isotr}.
	
	Now, by \Cref{lem:free-isotr}, we have $k\leqslant n$ and we can find an element $\beta\in Sp(\sM,\phi)$ such that $\beta(\tau(H))=\langle z_1,z_2,\ldots,z_k\rangle$. By \Cref{prop:surj}, the element $\beta\in Sp(\sM,\phi)$ can be lifted to an element $\hat\beta\in Cl_n$ and we get $\tau(\hat\beta H \hat\beta^{-1})=\langle z_1,z_2,\ldots,z_k\rangle$. This means that $\hat\beta H \hat\beta^{-1}$ has a set of generators of the form $\{\xi^{a_1}Z_1,\xi^{a_2}Z_2,\ldots,\xi^{a_k}Z_k\}$ for some integers $a_1,\ldots,a_k$. Now we can additionally conjugate the obtained subgroup by $X_1^{a_1}X_2^{a_2}\ldots X_k^{a_k}$ to get the subgroup generated by $Z_1,Z_2,\ldots,Z_k$.
\end{proof}

\begin{rk}
The proposition above may give the impression that all codes based on free groups $H$ are trivial. However, one should take into account the following point. When studying the error-correction properties of a code, it is important to consider the length function $\ell\colon \calP_n \to \bbZ_{\geqslant 0}$ defined as the number of indices $i$ such that $a_i$ or $b_i$ is nonzero in the presentation \eqref{eq:Pauli-elements}. This length function is not invariant under the action of $Cl_n$, and therefore conjugating $H$ to a subgroup generated by $Z$-operators does not preserve the error-correction properties of the code. In order for the code to correct many errors, one requires that the elements of $N(H)\setminus H$ have sufficiently large length.
\end{rk}

\begin{proof}[Proof of \Cref{thm:main-free-dstab}]
	In view of \Cref{prop:conj-Zs}, it is enough to prove the theorem in the case when $H$ is generated by $Z_1,\ldots,Z_k$. Since the normaliser $N(H)$ of $H$ in the Pauli group coincides with the centraliser (see \Cref{rk:norm=centr}), by \Cref{lem:furmula-commut} we have $x\in N(H)$ if and only if $\tau(x)\in \langle z_1,z_2,\ldots,z_k\rangle^\perp=\langle z_1,z_2,\ldots,z_n,x_{k+1},\ldots,x_n\rangle$. This means that $N(H)$ is generated by $\zeta$, $Z_1,\ldots,Z_n$, $X_{k+1},\ldots,X_n$.
	
	The vector space $V^H$ is the subspace $v_0^{\otimes k}\otimes (\bbC^d)^{\otimes n-k}\subset (\bbC^d)^{\otimes n}$ and has an obvious identification with $(\bbC^d)^{\otimes n-k}$. The action of $N(H)/H$ on $V^H$ is generated by $\zeta,Z_{k+1},\ldots,Z_n,X_{k+1},\ldots,X_n$. Under this identification, they are exactly the generators of the Pauli group $\calP_{n-k}$ acting on $(\bbC^d)^{\otimes n-k}$.

\end{proof}

\subsection{Eigenspaces}
\label{subs:eigen-free}

Let $H\subset \calP_n$ be as in \Cref{thm:main-free-dstab}. Since the orders of elements of $H$ divide $d$, we may identify the groups $\Hom(H,\Zd)$, $\Hom(H,\mu_d)$ and $\Hom(H,\bbC^*)$ and we denote each of them by $H^*$.
Let $V=\bigoplus_{\chi\in H^*}V_\chi$ be the decomposition of $V$ into a direct sum of common eigenspaces of $H$, where for $\chi\in H^*=\Hom(H,\bbC^*)$ we set $V_\chi=\{v\in V|\, \forall h\in H,\, h\cdot v=\chi(h)v \}$.

Then for each $x\in \calP_n$ and $\chi\in H^*$ we have $x(V_\chi)=V_{\chi'}$ for some $\chi'\in H^*$. This yields an action of $\calP_n$ on $H^*$. Moreover, the protected space $V^H$ is the same thing as $V_{\mathbb 1}$, where $\mathbb 1\in H^*$ is the trivial character.

\begin{rk}
	\label{rk:trans=dim}
	It is clear that all eigenspaces $V_\chi$ for $\chi$'s in one $\calP_n$-orbit have the same dimension. Then having $\dim V^H=\frac{\dim V}{\# H^*}$ is equivalent to the transitivity of the $\calP_n$-action on $H^*$. 
\end{rk}

\begin{prop}
	\label{prop:trans-free}
	The action of $\calP_n$ on $H^*$ is transitive. In particular all vector spaces $V_\chi$ have the same dimension.
\end{prop}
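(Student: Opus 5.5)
We first compute how $\calP_n$ acts on $H^*$ and then show that the resulting map $\calP_n\to H^*$ is surjective.

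\emph{Computing the action.} Let $x\in\calP_n$, $h\in H$ and $v\in V_\chi$. By \Cref{lem:furmula-commut} we have $hx=\xi^{\phi(\tau(h),\tau(x))}xh$, hence
$$
h(xv)=\xi^{\phi(\tau(h),\tau(x))}\chi(h)\,xv .
$$
So $x(V_\chi)=V_{\chi\cdot\psi_x}$, where $\psi_x\in H^*$ is the character $h\mapsto \xi^{\phi(\tau(h),\tau(x))}$. Under the identification $H^*=\Hom(H,\Zd)$, this means that $\calP_n$ acts on $H^*$ by translations: $\chi\mapsto \chi+\psi_x$ with $\psi_x=\phi(\tau(\bullet),\tau(x))$.

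\emph{Reduction to surjectivity.} An action by translations is transitive as soon as every element of $H^*$ occurs as some $\psi_x$. The assignment $x\mapsto\psi_x$ factors as
$$
\calP_n\xrightarrow{\ \tau\ }\sM\xrightarrow{\ \sim\ }\sM^*\longrightarrow \tau(H)^*\simeq H^*,
$$
where the middle arrow is $m\mapsto\phi(\bullet,m)$ and the last arrow is restriction.

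\emph{Surjectivity of each arrow.}
\begin{itemize}
\item The map $\tau$ is surjective by \Cref{lem:isom-quot-Pauli}.
\item The middle arrow is bijective because $\phi$ is symplectic on $\sM$; replacing $\phi(m,\bullet)$ by $\phi(\bullet,m)$ only changes a sign.
\item The restriction $\sM^*\to\tau(H)^*$ is surjective by \Cref{lem:star-surj}.
\item The identification $\tau(H)\simeq H$ holds by \eqref{eq:cond-H-1}, see \Cref{rk:norm=centr}.
\end{itemize}
Hence every $\psi\in H^*$ equals $\psi_x$ for some $x\in\calP_n$, and the action is transitive.

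\emph{Equal dimensions.} Each $x\in\calP_n$ is invertible and maps $V_\chi$ onto $V_{\chi+\psi_x}$, so all $V_\chi$ have the same dimension, as noted in \Cref{rk:trans=dim}.

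\emph{Alternative route.} One could instead use \Cref{prop:conj-Zs} and check the claim directly for $H=\langle Z_1,\ldots,Z_k\rangle$, where the operators $X_1,\ldots,X_k$ visibly permute the characters transitively. The argument above is more direct and does not use freeness at all.

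\emph{Main obstacle.} The only point needing care is matching the conventions in the first step: the order of the arguments of $\phi$ and the identification of $\mu_d$ with $\Zd$. These affect only signs and do not affect transitivity.
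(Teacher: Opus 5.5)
Your proof is correct, but it is not the route the paper takes for this proposition.

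The paper argues through the dimension of the protected space. By \Cref{rk:trans=dim} it suffices to know $\dim V^H=\dim V/\#H^*=d^{n-k}$, and this is read off from \Cref{thm:main-free-dstab}. That theorem in turn rests on \Cref{prop:conj-Zs}, i.e.\ on extending a basis of the free isotropic module $\tau(H)$ to a symplectic basis (\Cref{lem:free-isotr}), and on the surjectivity $Cl_n\to Sp(\sM,\phi)$ (\Cref{prop:surj}, which at that point is quoted from the literature). Your ``alternative route'' is essentially this argument.

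Your main argument instead makes the action explicit as translation by the homomorphism $x\mapsto\phi(\tau(\bullet),\tau(x))$. It gets surjectivity from the nondegeneracy of $\phi$ and \Cref{lem:star-surj} alone, so it uses neither freeness nor the Clifford group. It is exactly the argument the paper gives later in \Cref{lem:dim-stab-gen}, where the logic is inverted: transitivity comes first and the dimension formula is a consequence.

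A further advantage of your route is that it only needs the trivial direction of \Cref{rk:trans=dim}: characters in one orbit have isomorphic eigenspaces. The paper's proof uses the converse, dimension count $\Rightarrow$ transitivity, and that direction is not purely formal. It needs an extra input such as the irreducibility of $V$ under $\calP_n$ (\Cref{rk:els-of-Pn}), which forces the sum of the $V_\chi$ over the orbit of the trivial character to be all of $V$.

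In exchange, the paper's route costs nothing once \Cref{thm:main-free-dstab} is available, which suits the pedagogical organisation of Part I.
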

\begin{proof}
	By \Cref{rk:trans=dim}, it is enough to check $\dim V^H=\frac{\dim V}{\# H^*}$. This is true because by \Cref{thm:main-free-dstab}, the dimension of $V^H$ is $d^{n-k}$.
\end{proof}

\begin{rk}
	We deduced above the transitivity from the dimension of the protected space. However, we will see in \Cref{subs:descr-Heis-symp} that the transitivity is still true without the assumption "free". However, there we invert the logic of \Cref{rk:trans=dim}. We first deduce the transitivity using the theory of Heisenberg extensions, then we deduce the dimension of the protected space from the transitivity. 
\end{rk}

By the proposition above, each eigenspace $V_\chi$ is of the form 
$$V_\chi = x(V^H) = V^{xHx^{-1}}$$
for some $x \in \calP_n$. 
Moreover, the stabiliser of $\mathbb 1 \in H^*$ in $\calP_n$ is $N(H)$. 
Since $V_\chi = x(V_{\mathbb 1})$, the stabiliser of $\chi$ is 
$xN(H)x^{-1} = N(H)$ because $N(H)$ is normal. 
Thus, for each eigenspace $V_\chi$, the set of elements of the Pauli group preserving it is precisely $N(H)$. 

\begin{lem}
	Assume $\chi \in H^*$ and $x \in \calP_n$. Then we have the following equivalences:
	$$
	x(V_\chi) \subset V_\chi 
	\Leftrightarrow x(V^H) \subset V^H 
	\Leftrightarrow x \in N(H).
	$$
\end{lem}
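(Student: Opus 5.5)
The argument is short and rests on three ingredients just established: every eigenspace $V_\chi$ is nonzero (by \Cref{prop:trans-free}, since $V^H=V_{\mathbb 1}\neq 0$ has dimension $d^{n-k}$ and all $V_\chi$ have the same dimension); each $x\in\calP_n$ maps eigenspaces to eigenspaces; and $N(H)$ is normal in $\calP_n$ (\Cref{rk:norm=centr}). I will show that each of the first two conditions is equivalent to the third, $x\in N(H)$.

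\emph{Step 1: $x(V_\chi)$ is an eigenspace.} For $h\in H$, \Cref{lem:furmula-commut} gives $x^{-1}hx=\xi^{c(h)}h$ for some $c(h)\in\Zd$. Hence, for $v\in V_\chi$,
$$h\,x(v)=x\,(x^{-1}hx)\,v=\xi^{c(h)}\chi(h)\,x(v),$$
so $x(V_\chi)\subset V_{\chi'}$ with $\chi'(h)=\xi^{c(h)}\chi(h)$. Applying the same argument to $x^{-1}$ gives equality, $x(V_\chi)=V_{\chi'}$. Therefore $x(V_\chi)\subset V_\chi$ holds if and only if $\chi'=\chi$, because distinct eigenspaces intersect trivially and $V_\chi\neq 0$. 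This in turn holds if and only if $\xi^{c(h)}=1$ for all $h\in H$, i.e. $x$ commutes with every element of $H$.

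\emph{Step 2: conclude.} The condition obtained in Step 1 does not depend on $\chi$: it says exactly that $x$ lies in the centraliser of $H$, which equals $N(H)$ by \Cref{rk:norm=centr}. Taking $\chi=\mathbb 1$ gives $x(V^H)\subset V^H\Leftrightarrow x\in N(H)$, and taking an arbitrary $\chi$ gives the other equivalence.

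An alternative route uses $V_\chi=y(V^H)$ and the normality $yN(H)y^{-1}=N(H)$, but the direct commutator computation is cleaner and is the one I would use. The only point needing care is the nonvanishing of $V_\chi$. Without it, the implication $x(V_\chi)\subset V_\chi\Rightarrow\chi'=\chi$ would fail, since an empty eigenspace is trivially preserved by every $x$. This nonvanishing is exactly what \Cref{prop:trans-free} supplies, so I do not expect a genuine obstacle.
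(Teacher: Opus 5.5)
Your proof is correct, but it takes a slightly different route from the paper's.

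The paper works with the action of $\calP_n$ on $H^*$. It takes for granted that the stabiliser of the trivial character $\mathbb 1$ is $N(H)$. It then uses transitivity (\Cref{prop:trans-free}) to write $\chi=y\cdot\mathbb 1$, so the stabiliser of $\chi$ is $yN(H)y^{-1}$. This equals $N(H)$ because $N(H)$ is normal (\Cref{rk:norm=centr}).

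You instead compute the action explicitly. You show that $x$ sends $V_\chi$ into $V_{\chi'}$ with $\chi'(h)=\xi^{\phi(\tau(h),\tau(x))}\chi(h)$. So $\calP_n$ acts on $H^*$ by translations, and every character has the same stabiliser, namely the centraliser of $H$. This gives you two things:
\begin{itemize}
\item The independence from $\chi$ is visible directly, so you never need normality of $N(H)$. You list normality as an ingredient, but your Steps 1--2 only use that the normaliser equals the centraliser.
\item You supply the justification for the case $\chi=\mathbb 1$, which the paper leaves implicit.
\end{itemize}

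In your argument, transitivity enters only through $V_\chi\neq 0$, and you are right that this is essential. The paper's route uses transitivity more structurally, to transport the statement from $\mathbb 1$ to an arbitrary $\chi$.

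A minor point: you do not need the equality $x(V_\chi)=V_{\chi'}$; the inclusion is enough.
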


\section{Kitaev model for qudits}
\label{sec:Kitaev}

\subsection{CSS codes}
\label{subs:CSS}

The Pauli group has two special abelian subgroups: the subgroup $\calP^Z_n$ generated by $Z_1,Z_2,\ldots,Z_n$ and the subgroup $\calP^X_n$ generated by $X_1,X_2,\ldots,X_n$. Moreover, the homomorphism $\tau\colon \calP_n\to\sM$ induces isomorphisms of $\calP^Z_n$ and $\calP^X_n$ with the Lagrangian submodules $\langle z_1,z_2,\ldots,z_n \rangle$ and $\langle x_1,x_2,\ldots,x_n \rangle$ of $\sM$.

When we speak about CSS codes, we mean that we take an abelian subgroup $H\subset \calP_n$ of a particular form: $H=H^Z\cdot H^X$ where $H^Z$ is a subgroup of $\calP^Z_n$ and $H^X$ is a subgroup of $\calP^X_n$ such that $H^Z$ and $H^X$ commute. Moreover, it is clear that choosing $H^Z$ and $H^X$ in this way is equivalent to choosing submodules of $\langle z_1,z_2,\ldots,z_n \rangle$ and $\langle x_1,x_2,\ldots,x_n \rangle$ that are orthogonal to each other.

The following lemma is obvious.

\begin{lem}
	The group $H=H^Z\cdot H^X$ satisfies automatically \eqref{eq:cond-H-1}. Moreover, if $H^X$ and $H^Z$ satisfy \eqref{eq:cond-H-2}, then $H$ satisfies \eqref{eq:cond-H-2}.
\end{lem}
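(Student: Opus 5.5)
Both claims follow from the facts that the $Z$-type and $X$-type operators each form an abelian group, and that $H^Z$ and $H^X$ commute.

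\emph{Condition \eqref{eq:cond-H-1}.} Every element of $H=H^Z\cdot H^X$ can be written as $h=h_Zh_X$ with $h_Z\in H^Z\subset\calP^Z_n$ and $h_X\in H^X\subset\calP^X_n$. By \Cref{rk:els-of-Pn} we may write
$$h_Z=Z_1^{b_1}\cdots Z_n^{b_n},\qquad h_X=X_1^{a_1}\cdots X_n^{a_n}.$$
Reordering, $h=\zeta^cX_1^{a_1}Z_1^{b_1}\cdots X_n^{a_n}Z_n^{b_n}$ for some $c$. Suppose $h$ is a scalar $\zeta^a$. By the uniqueness of the presentation \eqref{eq:Pauli-elements}, all $a_r$ and $b_r$ vanish in $\Zd$. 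Hence $h_Z=h_X=I$, so $h=I$.

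\emph{Condition \eqref{eq:cond-H-2}.} Since $H^Z$ and $H^X$ commute, multiplication gives a surjective group homomorphism $H^Z\times H^X\to H$. It is injective: if $h_Zh_X=I$, then $h_Z=h_X^{-1}\in\calP^Z_n\cap\calP^X_n$. This intersection is trivial, because $\tau$ maps $\calP^Z_n$ and $\calP^X_n$ isomorphically onto $\langle z_1,\ldots,z_n\rangle$ and $\langle x_1,\ldots,x_n\rangle$, which intersect in $0$, and $\tau$ is injective on these subgroups.

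Therefore $H\simeq H^Z\times H^X$. If $H^Z\simeq\Zd^{k_1}$ and $H^X\simeq\Zd^{k_2}$, then $H\simeq\Zd^{k_1+k_2}$, which is \eqref{eq:cond-H-2}.

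The proof requires no step beyond checking the triviality of $\calP^Z_n\cap\calP^X_n$.
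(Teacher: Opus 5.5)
Your proof is correct. The paper gives no argument and simply calls the lemma obvious; your verification is the routine check it leaves implicit: uniqueness of the normal form \eqref{eq:Pauli-elements} for \eqref{eq:cond-H-1}, and $H\simeq H^Z\times H^X$ via $\calP^Z_n\cap\calP^X_n=\{I\}$ for \eqref{eq:cond-H-2}.
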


In particular this means that if $H^X$ and $H^Z$ satisfy \eqref{eq:cond-H-2}, then we can apply \Cref{thm:main-free-dstab} to $H$.

\subsection{Construction of the model}
\label{subs:Kitaev-mod}

We describe here the qudit version of the Kitaev model \cite{kitaev2003fault}. It can be viewed as the special case $G=\bbZ/d\bbZ$ of the general construction in \cite[\S 5]{kitaev2003fault}, and it recovers the original qubit model of \cite[\S 2]{kitaev2003fault} when $d=2$. We note, however, that the passage to $G=\bbZ/d\bbZ$ requires fixing specific conventions, which we choose so as to make the model compatible with our algebraic framework.

Let $\Gamma=(S,E)$ be a finite oriented graph, where $S$ is the set of vertices and $E$ is the set of arrows. Let $\Sigma=\Sigma_g$ be a connected orientable compact surface of genus $g\in \bbZ_{\geqslant 0}$. Assume that the graph $\Gamma$ is inscribed in $\Sigma$ (i.e., the surface $\Sigma$ is a CW-complex whose $1$-skeleton is $\Gamma$). Denote by $F$ the set of faces (i.e., the connected components of $\Sigma\backslash \Gamma$). Note that the graph $\Gamma$ is connected by construction.

We place a qudit on each arrow of the graph. The Hilbert space describing this system is
$
V=(\bbC^d)^{\otimes \#E}.
$
For each vertex $s\in S$ and each face $f\in F$ we define a vertex operator $A_s\colon V\to V$ and a face operator $B_f\colon V\to V$ as follows.

The vertex operator $A_s$ acts by $X$ on each qudit sitting on an arrow entering the vertex $s$, and by $X^{-1}$ on each qudit sitting on an arrow leaving $s$.  
The face operator $B_f$ acts by $Z$ on each qudit sitting on an arrow for which the face $f$ lies on the right of the oriented arrow, and by $Z^{-1}$ on each qudit sitting on an arrow for which the face $f$ lies on the left of the oriented arrow.

\begin{tikzpicture}[>=Stealth,baseline=(current bounding box.center)]
\draw[->,thick] (0,0) -- (3,0);

\node at (-0.5,0) {$X^{-1}$};
\node at (3.3,0) {$X$};

\node at (1.5,0.35) {$Z^{-1}$};       
\node at (1.5,-0.35) {$Z$};  
\end{tikzpicture}

\begin{rk}
	Note that there is an involution $\psi\colon \bbC^d\to\bbC^d$, $v_i\mapsto v_{-i}$. It satisfies $X\psi=\psi X^{-1}$ and $Z\psi=\psi Z^{-1}$. This implies that the model does not change when we reverse an arrow in the graph and simultaneously apply $\psi$ to the corresponding qudit.
\end{rk}

The $(\#S+\#F)$ operators $A_s$ for $s\in S$ and $B_f$ for $f\in F$ commute. They also satisfy
$$
\prod_{s\in S}A_s=I,\qquad 
\prod_{f\in F}B_f=I,\qquad 
A_s^d=B_f^d=I.
$$

Now take $n=\#E$ and consider the subgroup $H$ of the Pauli group generated by the operators $A_s$ for $s\in S$ and $B_f$ for $f\in F$.

\begin{lem}
	The subgroup $H$ defined above satisfies condition $\eqref{eq:cond-H-2}$ with $k=\#S+\#F-2$. 
\end{lem}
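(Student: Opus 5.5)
We need to show $H\simeq \Zd^{\#S+\#F-2}$. The plan is to split $H=H^X\cdot H^Z$ as in \Cref{subs:CSS}, with $H^X=\langle A_s\rangle$ and $H^Z=\langle B_f\rangle$, and to identify each factor with a free $\Zd$-module via $\tau$. Condition \eqref{eq:cond-H-1} holds automatically, so $H\simeq\tau(H)=\tau(H^X)\oplus\tau(H^Z)$. Here $\tau(H^X)\subset\langle x_e\rangle$ and $\tau(H^Z)\subset\langle z_e\rangle$, and these two submodules meet only in $0$. It therefore suffices to prove that $\tau(H^X)\simeq\Zd^{\#S-1}$ and $\tau(H^Z)\simeq\Zd^{\#F-1}$.

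For the vertex part, identify $\langle x_e\rangle$ with $\Zd^E$. Then $\tau(H^X)$ is the image of the coboundary map $\delta\colon \Zd^S\to\Zd^E$, which sends $1_s$ to the signed sum of the arrows incident to $s$ (up to a global sign). We first compute its kernel. If $\sum_s\lambda_s\delta(1_s)=0$, then for each arrow $e\colon s\to s'$ the coefficient of $e$ is $\lambda_{s'}-\lambda_s=0$. Since $\Gamma$ is connected, $\lambda$ is constant, so $\ker\delta=\Zd\cdot(1,\ldots,1)$. Freeness of the image then follows by choosing a vertex $s_0$ and decomposing $\Zd^S=\Zd^{S\setminus\{s_0\}}\oplus\Zd(1,\ldots,1)$. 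From this splitting, $\delta$ restricted to $\Zd^{S\setminus\{s_0\}}$ is injective with image $\tau(H^X)$, so $\tau(H^X)\simeq\Zd^{\#S-1}$.

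The face part is the main obstacle: the analogous kernel computation needs the fact that adjacent faces share an arrow, together with connectedness of the dual graph. With the map $\partial\colon\Zd^F\to\Zd^E$, $1_f\mapsto$ the signed boundary of $f$, each arrow $e$ lies on the boundary of exactly two face-sides: the face on its right and the face on its left. These may coincide, in which case the contributions $Z$ and $Z^{-1}$ cancel. Hence the coefficient of $e$ in $\sum\lambda_f\partial(1_f)$ is $\lambda_{f_R(e)}-\lambda_{f_L(e)}$. A relation therefore forces $\lambda$ to be constant on adjacent faces. The dual graph, with faces as vertices and arrows as edges between $f_L(e)$ and $f_R(e)$, is connected because $\Sigma$ is connected: $\Sigma$ minus the vertex set is connected, and it is the union of open faces glued along open arrows. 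So $\ker\partial=\Zd(1,\ldots,1)$, and the same splitting argument gives $\tau(H^Z)\simeq\Zd^{\#F-1}$.

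Combining the two parts gives $H\simeq\Zd^{\#S+\#F-2}$, which is \eqref{eq:cond-H-2} with $k=\#S+\#F-2$.
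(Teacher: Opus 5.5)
Your proof is correct and follows the same route as the paper. You split $H=H^Z\cdot H^X$ and note that a relation $\prod_s A_s^{\lambda_s}=I$ forces $\lambda_{s'}-\lambda_s=0$ along every arrow, so connectedness makes $\lambda$ constant. Dropping one generator $A_{s_0}$ (resp.\ $B_{f_0}$) then leaves a basis. For the faces the paper only says ``the same argument holds'', whereas you handle arrows with the same face on both sides and derive connectedness of the dual graph from connectedness of $\Sigma$, so your version is more complete there.
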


\begin{proof}
	The group $H$ is clearly of the form $H=H^Z\cdot H^X$ as in \Cref{subs:CSS}. It is enough to check that $H^Z$ is a free $\Zd$-module of rank $\#F-1$ and $H^X$ is a free $\Zd$-module of rank $\#S-1$.
	
	Let us show that if we remove one $A_{s_0}$ for some $s_0\in S$, then the remaining operators $\{A_s,s\in S\backslash\{s_0\}\}$ form a basis of the $\Zd$-module $H^X$.
	Assume that we have a relation of the form $\prod_{s\in S}A_s^{a_s}=I$ for some integers $a_s$. The product on the left-hand side acts on the qudit on the arrow $e\in E$ by $X^{a_{s_1}-a_{s_2}}$ when the arrow goes from $s_1$ to $s_2$. This shows that, since the graph is connected, the operator $\prod_{s\in S}A_s^{a_s}$ may be the identity only when all $a_s$ are congruent modulo $d$. This shows that once we remove $A_{s_0}$, the remaining $A_s$ have no relations over $\Zd$.
	
	The same argument holds for the $B_f$ to show that $H^Z$ is a free $\Zd$-module of rank $\#F-1$.
\end{proof}

\subsection{Protected space}

Consider the protected space $V^H$. The theory developed in \Cref{subs:stab-formal} is applicable to it. Its dimension could be deduced from \Cref{thm:main-free-dstab}: it is equal to 
$$
d^{n-k}=d^{\#E-\#S-\#F+2}=d^{2g}. 
$$
However, it could be computed without using the stabiliser formalism, via an explicit homological argument below (which is close to the argument in \cite{freedman2001projective}).

\begin{prop}
	We have $\dim V^H=d^{2g}$.
\end{prop}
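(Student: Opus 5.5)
We compute $V^H$ directly as a space of functions on $Z$-eigenbasis configurations, following the homological argument of \cite{freedman2001projective}. The standard basis of $V=(\bbC^d)^{\otimes E}$ is indexed by $1$-cochains, that is, maps $c\colon E\to\Zd$, with $v_c=\bigotimes_{e}v_{c(e)}$. Cochain $c$ is viewed as assigning a value to each oriented edge.

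\textbf{Step 1: the face conditions.} The operator $B_f$ is diagonal in this basis. It acts on $v_c$ by $\xi^{(\delta c)(f)}$, where $(\delta c)(f)$ is the signed sum of $c$ around the boundary of $f$; the signs come from the rule ``$Z$ if $f$ is on the right, $Z^{-1}$ if on the left''. Hence a vector $v=\sum_c\lambda_c v_c$ is fixed by all $B_f$ if and only if $\lambda_c=0$ unless $c$ is a cocycle. Here cocycle means $\delta c=0$ in the cellular cochain complex $C^0(\Sigma;\Zd)\to C^1(\Sigma;\Zd)\to C^2(\Sigma;\Zd)$ given by the CW structure. One must be careful that the orientation conventions make this sign the coboundary, or at least $\pm$ it; either sign gives the same condition.

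\textbf{Step 2: the vertex conditions.} The operator $A_s$ sends $v_c$ to $v_{c+\delta\mathbb 1_s}$, up to a global sign convention, where $\mathbb 1_s\in C^0$ is the indicator of $s$. On an arrow $s_1\to s_2$ the operator $A_s$ shifts the label by $+1$ if $s=s_2$ and by $-1$ if $s=s_1$, which is exactly $(\delta\mathbb 1_s)(e)$. The $A_s$ generate the coboundaries $B^1=\delta C^0$, and they preserve the cocycles $Z^1$. Therefore $v$ is fixed by all $A_s$ if and only if $\lambda$ is constant on cosets of $B^1$. Combining Steps 1 and 2, $V^H$ has a basis given by the indicator sums $\sum_{c\in \gamma}v_c$, one for each class $\gamma\in Z^1/B^1=H^1(\Sigma;\Zd)$.

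\textbf{Step 3: the count.} Cellular cohomology agrees with singular cohomology, and $H^1(\Sigma_g;\Zd)\simeq\Zd^{2g}$ by universal coefficients, since $H_0$ and $H_1(\Sigma_g;\bbZ)=\bbZ^{2g}$ are free. Hence $\dim V^H=d^{2g}$.

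The main obstacle is the bookkeeping in Steps 1 and 2: verifying that the stated orientation conventions for $A_s$ and $B_f$ really produce $\delta$ on $C^0$ and $C^1$ of a single cochain complex, with $\delta\circ\delta=0$. This is what makes the $A_s$ preserve cocycles and the $B_f$ commute with the $A_s$. I would check this locally: at an edge, the faces on its right and left, and the edge's endpoints. After that, the rest is standard algebraic topology.
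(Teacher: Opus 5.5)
Your proposal is correct and follows essentially the same route as the paper: the standard basis of $V$ is indexed by $1$-cochains, the face operators cut the support down to cocycles $Z^1$, the vertex operators force the coefficients to be constant on cosets of $B^1$, and the dimension is $\#H^1(\Sigma;\Zd)=d^{2g}$. Your extra care with the sign conventions ($B_f$ acting by $\xi^{\pm(\delta c)(f)}$ and $A_s$ by translation by $\delta\mathbb{1}_s$, with $\delta\circ\delta=0$) and your universal-coefficient computation of $H^1(\Sigma_g;\Zd)$ only fill in details that the paper leaves implicit.
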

\begin{proof}
	Consider the cellular cochain complex
	$$
	\calC^0 \xrightarrow{d^0} \calC^1 \xrightarrow{d^1} \calC^2
	$$
	for $\Sigma$ over $\Zd$.
	We have 
	\begin{itemize}
		\item $\calC^0=$ $\Zd$-linear combinations of elements of $S$,
		\item $\calC^1=$ $\Zd$-linear combinations of elements of $E$,
		\item $\calC^2=$ $\Zd$-linear combinations of elements of $F$.
	\end{itemize}
	
	The most obvious basis of $V$ is given by $\calC^1$. The basis of $\bigcap_{f\in F}\Ker(B_f-I)\subset V$, which is a subset of this basis, is given by $Z^1\subset \calC^1$, where $Z^1=\Ker(d^1)$. Finally, the conditions coming from the $A_s$ say that some basis elements must have the same coefficients. Then $V^H$ has a basis given by $Z^1/B^1=H^1(\Sigma,\Zd)\simeq \Zd^{2g}$, where $B^1=\Im (d^0)$.
\end{proof}

\begin{rk}
	The dimension of $V^H=V_{\mathbb 1}$ for the Kitaev model could also be computed in a much more elementary computational way. We could see the transitivity of the action of $\calP_n$ on the set of characters $\chi\in H^*$ without using the stabiliser formalism. Indeed, this transitivity becomes obvious when we describe the characters $\chi\in H^*$ as configurations of charges (see \Cref{subs:eigen-Kitaev}) and we look at how path operators (see \Cref{subs:path}) act on the set of characters. Once we know that all eigenspaces have the same dimension, we deduce the dimension of the protected space $V^H$ as $\frac{\dim V}{\# H^*}$.
\end{rk}

\subsection{Description of the eigenspaces}
\label{subs:eigen-Kitaev}

We have a decomposition into a direct sum of common eigenspaces: $V=\bigoplus_{\chi\in H^*} V_\chi$. The protected space $V^H$ is one of the $V_\chi$.

Let us give a combinatorial description of each $\chi$. Let us imagine that each vertex $s\in S$ may contain a number of copies of elementary electric charges $e$ (with the convention that having $d$ charges $e$ is the same as having no charge at all, so the charges are considered modulo $d$). Similarly, we imagine that each face may contain a number of copies of magnetic charges $m$ (also modulo $d$). 

Now we interpret each $\chi$ as a distribution of charges. Fix $\chi$, and take $v\in V_\chi$, $v\ne 0$.
If we have $A_sv=\xi^p v$ and $B_fv=\xi^q v$, then we say that there are $p$ elementary electric charges $e$ at $s$ (or simply the charge $p\cdot e$ at $s$), and that there are $q$ elementary magnetic charges $m$ at $f$ (or simply the charge $q\cdot m$ at $f$). Additionally, the configuration of charges must be such that the total electric and magnetic charge is zero modulo $d$.

\subsection{Path operators}
\label{subs:path}

For each path $t$ in the graph from a vertex $s_1$ to a vertex $s_2$ (the path is oriented from $s_1$ to $s_2$, but we do not require this orientation to coincide with the orientation of the graph chosen before), we construct a path operator $S^Z(t)\colon V\to V$ as follows. We apply $Z$ to all qudits along the path if the chosen orientation of the path agrees with the orientation of the corresponding edge of the graph, and we apply $Z^{-1}$ otherwise.

Similarly, for each dual path $t$ (that is, a path in the dual graph, where faces and vertices are exchanged) from a face $f_1$ to a face $f_2$, we construct a path operator $S^X(t)\colon V\to V$ as follows. We apply $X$ to all qudits on edges crossed by the path whose orientations point to the right of the path, and $X^{-1}$ to those pointing to the left.

Let us see how these operators modify eigenspaces. Note that $S^Z(t)$ commutes with all $A_s$ and $B_f$ except for $A_{s_1}$ and $A_{s_2}$. This means that when we have $S^Z(t)\cdot V_\chi\subset V_{\chi'}$, the configurations $\chi$ and $\chi'$ have the same charges except at $s_1$ and $s_2$. Moreover, we see that the path operator moves a charge $-e$ from $s_1$ to $s_2$.

Similarly, we see that for a dual path $t\colon f_1\to f_2$, the path operator $S^X(t)$ moves a magnetic charge $-m$ from $f_1$ to $f_2$.

\begin{rk}
	Since path operators act transitively on the set $H^*$ of $\chi$'s, all vector spaces $V_\chi$ have the same dimension. This allows another computation of the dimension of $V^H$. The total dimension of $V$ is $d^{\# E}$ and the number of possible $\chi$'s is $d^{\# S+\# F-2}$. Then the dimension of each $V_\chi$ is $d^{\# E-\# S-\# F+2}=d^{2g}$.

	If $\Sigma$ is a sphere ($g=0$), then each $V_\chi$ is one-dimensional. Physically, this means that the configuration of charges completely determines the physical state of the system. For higher genus ($g>0$), the eigenspaces $V_\chi$ are degenerate (more than one-dimensional). This means that a configuration of charges does not determine the state of the physical system and that there are additional interior degrees of freedom.
\end{rk}  

\begin{rk}
	Take a vector $v\in V^H$. Take two different paths $t,t'\colon s_1\to s_2$ with the same starting and ending points. Then we have $S^Z(t)\cdot v=S^Z(t')\cdot v$ whenever the paths $t$ and $t'$ are homotopic (this is always true for $g=0$). However, for $g>0$, a loop (closed path) which is not homotopic to the identity can send a vector $v\in V^H$ to another element of $V^H$.
	
	Now take a vector $v\in V_\chi$. Then we have $S^Z(t)\cdot v=S^Z(t')\cdot v$ in the case when the paths $t$ and $t'$ are homotopic in such a way that the homotopy does not go through magnetic charges. However, if there is a homotopy traversing magnetic charges, then $S^Z(t)\cdot v$ and $S^Z(t')\cdot v$ differ by a power of $\xi$ depending on the magnetic charge.
	
	In particular, when an $e$-particle is moved clockwise around an $m$-particle, this multiplies the state of the system by the phase factor $\xi$.
\end{rk}

\subsection{The normaliser}
As we have seen in \Cref{subs:eigen-free}, the set of elements of the Pauli group that preserve each eigenspace $V_\chi$ is the same and is equal to $N(H)$. Let us compute it.

\begin{prop}
	The group $N(H)$ is generated by $\zeta$, by the operators $S^Z(t)$ associated with loops (closed paths) $t$ and the operators $S^X(t')$ associated with dual loops (closed dual paths) $t'$.
\end{prop}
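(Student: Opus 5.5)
By \Cref{rk:norm=centr} the normaliser $N(H)$ coincides with the centraliser. By \Cref{lem:furmula-commut}, an element $x\in\calP_n$ lies in $N(H)$ if and only if $\tau(x)\in\tau(H)^\perp$. Since $\zeta\in N(H)$ and $\ker\tau=\langle\zeta\rangle$, it suffices to show that $\tau(H)^\perp\subset\sM$ is generated by the images of the loop operators $S^Z(t)$ and the dual-loop operators $S^X(t')$. The inclusion of the generated group in $N(H)$ is immediate. Indeed, \Cref{subs:path} shows that for a path $t\colon s_1\to s_2$ the operator $S^Z(t)$ commutes with every $A_s$ and $B_f$ except possibly $A_{s_1},A_{s_2}$. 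For a loop we have $s_1=s_2$, and the two contributions cancel: the loop enters and leaves $s_1$, so the commutation phases $\xi^{\pm1}$ cancel. The same argument applies to $S^X(t')$ for a dual loop, so both kinds of operator commute with $H$.

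For the reverse inclusion we use the CSS structure. Identify $\sM=\calC^1_Z\oplus\calC^1_X$, where $\calC^1_Z=\langle z_e\rangle\simeq\calC^1$ and $\calC^1_X=\langle x_e\rangle$, with $\phi$ pairing $z_e$ against $x_{e'}$ by $\delta_{e,e'}$. The element $\tau(H)$ is spanned by the $\tau(A_s)\in\calC^1_X$ and the $\tau(B_f)\in\calC^1_Z$. Hence an element $(z,x)$ lies in $\tau(H)^\perp$ if and only if $z$ is orthogonal to all $\tau(A_s)$ and $x$ is orthogonal to all $\tau(B_f)$. Writing $z=\sum_e c_e z_e$, the condition $\phi(z,\tau(A_s))=0$ reads $\sum_{e\to s}c_e-\sum_{e\leftarrow s}c_e=0$ in $\Zd$, which says that $c$ is a $1$-cycle: $c\in Z_1=\Ker(\partial_1\colon C_1\to C_0)$ in the cellular chain complex over $\Zd$. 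Dually, the condition on $x$ says that $x$ is a cycle in the dual cell complex, where edges of $\Gamma$ correspond to dual edges and faces to dual vertices. So it remains to show that $Z_1(\Gamma;\Zd)$ is generated by the chains of loops, and likewise for the dual graph.

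This is a purely graph-theoretic step. Choose a spanning tree $T$ of the connected graph $\Gamma$. For each edge $e\notin T$, let $t_e$ be the loop formed by $e$ and the unique tree path between its endpoints. Given a cycle $c$, subtract $\sum_{e\notin T}c_e\,[t_e]$. The resulting cycle is supported on $T$, and a cycle on a tree is zero: peel off leaves, where the coefficient of the leaf edge must vanish. Thus $c$ is a combination of loop chains. A $\Zd$-multiple $a[t]$ is realised by the operator $S^Z(t)^a$ or by traversing the loop $a$ times, and a sum of loop chains is $\tau$ of a product of the corresponding operators. The same argument applies to the dual graph, which is connected because $\Sigma$ is connected. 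Lifting back through $\tau$, any $x\in N(H)$ equals a product of loop and dual-loop operators times a power of $\zeta$.

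The main obstacle is bookkeeping of signs and orientations. The sign conventions in the definitions of $A_s$ and $S^Z(t)$, and of $B_f$ and $S^X(t')$, must be checked so that $\tau(S^Z(t))$ really is the chain $[t]$ with the correct signs, and the orthogonality condition must be exactly the cycle condition (including the rule for a loop passing through a vertex several times). The face and dual-edge orientation conventions ("right" versus "left") are the most delicate part, and I would verify them carefully on a single edge.
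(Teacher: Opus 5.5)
Your proposal is correct and follows essentially the same route as the paper. Both arguments use that the normaliser equals the centraliser. Commutation with the $A_s$ (resp.\ the $B_f$) then forces the $Z$-exponents (resp.\ the $X$-exponents) to form a divergence-free $\bbZ/d\bbZ$-flow on $\Gamma$ (resp.\ on the dual graph), and such flows decompose into loops. Beyond the paper, you give the spanning-tree proof of the graph-theoretic fact that the paper only quotes as standard. You also check the easy inclusion, that loop and dual-loop operators lie in $N(H)$, which the paper leaves implicit.
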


\begin{proof} 
	An element $x\in \calP_n$ has the form $\zeta^c\prod_{e\in E}Z_e^{a_e}X_e^{b_e}$ with $c,a_e,b_e\in \bbZ$. The fact that $x$ commutes with the vertex operators $A_s$ imposes conditions on the $a_e$'s, and the fact that $x$ commutes with the face operators $B_f$ imposes conditions on the $b_e$'s. 
	
	Let us first look at the $a_e$'s. They define a $\Zd$-valued flow on the graph $\Gamma$, where the value on the arrow $e$ is $a_e$ modulo $d$. The condition that $x$ commutes with a vertex operator $A_s$ says exactly that at the vertex $s$, the sum of incoming flows is equal to the sum of outgoing flows. In other words, this flow is divergence-free. By a standard fact from graph theory, every divergence-free flow is a sum of flows supported on cycles. This shows that the $Z$-part of $x$ can be written as a product of operators $S^Z(t)$ associated with loops $t$. 
	
	Similarly, the coefficients $b_e$ define a $\Zd$-valued flow on the dual graph, and the condition that $x$ commutes with each face operator $B_f$ says exactly that this dual flow is divergence-free. Hence the $X$-part of $x$ can be written as a product of operators $S^X(t')$ associated with dual loops $t'$. 

\end{proof}

\bigskip
\addtocontents{toc}{{\textbf{Part II: The general case}}}
\centerline{\LARGE Part II: The general case}

\section{Relaxing the freeness assumption}
\label{sec:not-free}

\subsection{Heisenberg extension}

In this section we introduce some notions about Heisenberg groups that are well known. However, we use some specific conventions, since we want the Heisenberg group to be finite and to have a proper relation to Pauli groups (\Cref{lem:Heis=Pauli}). Our definition uses the central extension by the group $\mu_\dbar$ of $\dbar$-th roots of unity. Our axiom $(c)$ in \Cref{def:Heis} is not completely standard. We add it because we want the uniqueness (\Cref{prop-Heis-unique}), the precise presentation by generators and relations (\Cref{lem:lift-give-genrel}), and the proper relation to the Pauli group (\Cref{lem:Heis=Pauli}).

\begin{df}
	For a finite $\Zd$-module $M$, we say that $(y_1,y_2,\ldots,y_t)$ is a quasi-basis of $M$ if $M=\langle y_1,y_2,\ldots,y_t\rangle$ and for each $\lambda_1,\ldots,\lambda_t\in\Zd$ the relation $\lambda_1y_1+\lambda_2y_2+\ldots+\lambda_ty_t=0$ implies $\lambda_1y_1=\lambda_2y_2=\ldots=\lambda_ty_t=0$. (This condition means that the only relations between these elements come from the orders of these elements.)
\end{df}

If $(y_1,y_2,\ldots,y_t)$ is a quasi-basis of $M$ such that $d_k$ is the order of $y_k$, then each element $m\in M$ can be written in a unique way as $m=\sum_{k=1}^t \mu_ky_k$ with $\mu_k\in\bbZ_{d_k}$. Each finite $\Zd$-module has a quasi-basis by \Cref{prop:classif-Zd-mod}.

\begin{df}
	\label{def:Heis}
	Let $M$ be a finitely generated $\Zd$-module. Let $\phi\colon M\times M\to \Zd$ be an alternating (see \Cref{def:alt-form}) $\Zd$-linear form. We say that a group $G$ is a \emph{Heisenberg extension} of $(M,\phi)$ if the following axioms hold.
	\begin{itemize}
		\item[$(a)$] There is a surjective group homomorphism $\tau\colon G\to M$ whose kernel is isomorphic to the group $\mu_{\dbar}$ of $\dbar$-th roots of unity.
		\item[$(b)$] For each $g,h\in G$, we have $gh=\xi^{\phi(\tau(g),\tau(h))}hg$.
		\item[$(c)$] Each element of the abelian group $M$ has a preimage in $G$ having the same order.
	\end{itemize}
\end{df}

Fix a quasi-basis $(y_1,y_2,\ldots,y_t)$ of $M$ and denote by $d_k$ the order of $y_k$. Consider the group $G$ defined by generators and relations in the following way. The generators of $G$ are $Y_1,\ldots,Y_t$, $\zeta$, and the relations are
\begin{equation}
\label{eq:Heis-1}
\quad\zeta^{\dbar}=1,\qquad Y_k\zeta=\zeta Y_k, \qquad Y_kY_r=\zeta^{2\phi(y_k,y_r)}Y_rY_k,
\end{equation}
\begin{equation}
\label{eq:Heis-2}
Y_k^{d_k}=1,
\end{equation}
where $k,r\in [1;t]$.

\begin{lem}
	\label{lem:el-of-Heis}
	Each element of $G$ can be written in a unique way in the form 
	\begin{equation}
	\label{eq:el-of-Heis}
	\zeta^a Y_1^{a_1}\ldots Y_t^{a_t},\qquad a\in[0;\dbar-1], a_k\in [0;d_k-1].
	\end{equation}
\end{lem}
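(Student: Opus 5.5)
The plan has two halves. Existence of the normal form is a rewriting argument using only the relations \eqref{eq:Heis-1} and \eqref{eq:Heis-2}. Uniqueness comes from building a concrete model of $G$ in which all the proposed words are visibly distinct, in the same spirit as \Cref{coro:Pn-gen+rel}.

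\emph{Existence.} Since $\zeta$ is central, every word in the generators and their inverses can be written as $\zeta^a$ times a word in $Y_1^{\pm1},\ldots,Y_t^{\pm1}$. Inverses are removed using $Y_k^{-1}=Y_k^{d_k-1}$ and $\zeta^{-1}=\zeta^{\dbar-1}$. The relation $Y_kY_r=\zeta^{2\phi(y_k,y_r)}Y_rY_k$ lets us swap adjacent letters at the cost of a central power of $\zeta$. A bubble sort therefore brings the word to the form $\zeta^aY_1^{b_1}\cdots Y_t^{b_t}$ with $b_k\geq 0$. Finally we reduce $b_k$ modulo $d_k$ using $Y_k^{d_k}=1$, and $a$ modulo $\dbar$ using $\zeta^{\dbar}=1$. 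This produces the form \eqref{eq:el-of-Heis}.

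\emph{Uniqueness.} Let $T$ be the set of tuples $(a,a_1,\ldots,a_t)$ with $a\in\bbZ/\dbar\bbZ$ and $a_k\in\bbZ/d_k\bbZ$. We define an action of the free group on $\zeta,Y_1,\ldots,Y_t$ on $T$ by left multiplication. The element $\zeta$ adds $1$ to $a$. The element $Y_k$ sends $(a,a_1,\ldots,a_t)$ to $(a+2\sum_{r<k}a_r\phi(y_k,y_r),\,a_1,\ldots,a_k+1,\ldots,a_t)$. This is the rule obtained by moving $Y_k$ past $Y_1^{a_1}\cdots Y_{k-1}^{a_{k-1}}$. We then check that the relations act trivially, so that the action descends to $G$. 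Evaluating the normal form word $\zeta^aY_1^{a_1}\cdots Y_t^{a_t}$ on the base point $(0,\ldots,0)$ returns exactly $(a,a_1,\ldots,a_t)$. Hence distinct normal forms give distinct elements of $G$.

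\emph{Well-definedness of the action.} This is the main obstacle, since the exponents must be well defined modulo $d_r$ and $\dbar$. Two facts are needed.
\begin{itemize}
\item[(i)] Since $d_r y_r=0$, we have $d_r\phi(y_k,y_r)=0$ in $\Zd$. Therefore $2a_r\phi(y_k,y_r)$ is well defined modulo $2d$ when $a_r$ is only known modulo $d_r$. It is then also well defined modulo $\dbar$, since $\dbar\mid 2d$.
\item[(ii)] Applying $Y_k$ $d_k$ times adds $2d_k\sum_{r<k}a_r\phi(y_k,y_r)$ to $a$. This vanishes modulo $\dbar$ because $d_k\phi(y_k,y_r)=0$ in $\Zd$. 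So $Y_k^{d_k}$ acts trivially.
\end{itemize}
Here one must be careful that $\phi(y_k,y_r)\in\Zd$ enters $\zeta^{2\phi}$ only through $2\phi$ modulo $2d$. This is fine because $\zeta^{2d}=1$.

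\emph{Commutation relations.} For $k<r$, we compute $Y_kY_r$ and $Y_rY_k$ on a general tuple. The two images differ in the $\zeta$-coordinate by $2a_k$-type terms. The only mismatch is the term coming from $Y_r$ passing the newly added $Y_k$. This is exactly the factor $\zeta^{2\phi(y_k,y_r)}$, using the alternating property $\phi(y_r,y_k)=-\phi(y_k,y_r)$. The case $k=r$ is trivial since $\phi(y_k,y_k)=0$, and the relations involving $\zeta$ are immediate because $\zeta$ acts centrally.
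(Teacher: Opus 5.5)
Your proof is correct and is essentially the paper's argument: existence by rewriting, and uniqueness via an explicit model of $G$ built from a triangular splitting of $\phi$. Your shift $2\sum_{r<k}a_r\phi(y_k,y_r)$ is exactly $2\phi_0(y_k,m)$ for the triangular form $\phi_0$ with $\phi_0(y_k,y_r)=\phi(y_k,y_r)$ for $r<k$ and $0$ otherwise.

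The only difference is cosmetic. The paper uses the linear representation $Y'_k v_m=\xi^{\phi_0(y_k,m)}v_{y_k+m}$ on $\bbC^M$, with $\zeta$ acting by a scalar. You use the corresponding permutation action on $\bbZ/\dbar\bbZ\times\prod_k\bbZ/d_k\bbZ$, with $\zeta$ acting by translation. Your checks (i), (ii) and your verification of the commutation relations are exactly what is needed for that action to descend to $G$.
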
	
\begin{proof}
	The fact that each element of $G$ can be written in such a form is easy to see from the relations, but the uniqueness is not obvious (the most delicate point is to see that the element $\zeta\in G$ is still of order $\dbar$ and not of a smaller order). To show the uniqueness, we construct a representation of $G$ on which the elements of this form act by different operators.

	It is clear that for a given alternating form $\phi$ we can find a $\Zd$-bilinear form $\phi_0$ such that for each $m_1,m_2\in M$ we have $\phi(m_1,m_2)=\phi_0(m_1,m_2)-\phi_0(m_2,m_1)$. (We can for example take $\phi_0$ such that $\phi_0(y_k,y_r)=\phi(y_k,y_r)$ if $k<r$ and $\phi_0(y_k,y_r)=0$ if $k>r$.) Consider the complex vector space $\bbC^M$ having a basis $\{v_m,\,m\in M\}$. We consider the operators $Y'_1,\ldots,Y'_t$ on $\bbC^M$ given by 
	$$
	Y'_k\cdot v_m=\xi^{\phi_0(y_k,m)}v_{y_k+m}.
	$$
	and, abusing notation, we denote by $\zeta$ and $\xi$ the operators of multiplication by the complex scalars $\zeta$ and $\xi$ on $\bbC^M$.

	The operators $Y'_1,\ldots,Y'_t,\zeta$ satisfy relations \eqref{eq:Heis-1} but probably not \eqref{eq:Heis-2} because ${Y'}_k^{d_k}$ may be a power of $\zeta$ which is not necessarily equal to $1$. However, we can fix this by multiplying the operators $Y'_k$ by appropriate roots of unity. This fixes relation \eqref{eq:Heis-2} without losing relations \eqref{eq:Heis-1}. We then get a representation of $G$ on the vector space $\bbC^M$. It is obvious from this representation that the elements of the form \eqref{eq:el-of-Heis} act all differently on this representation.
\end{proof}
	
\begin{lem}
	\label{lem:constr-Heis}
The group $G$ above is a Heisenberg extension of $(M,\phi)$.
\end{lem}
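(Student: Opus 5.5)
We have to verify the three axioms of \Cref{def:Heis} for the group $G$ defined by \eqref{eq:Heis-1} and \eqref{eq:Heis-2}, using \Cref{lem:el-of-Heis} as the key input.

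\emph{Axiom $(a)$.} First we construct $\tau\colon G\to M$ by $Y_k\mapsto y_k$ and $\zeta\mapsto 0$. We check that the relations are respected. The relation $\zeta^{\dbar}=1$ is clear. The commutation relations \eqref{eq:Heis-1} map to identities because $M$ is abelian. The relations \eqref{eq:Heis-2} hold because $d_k$ is the order of $y_k$. So $\tau$ is a well-defined homomorphism, and it is surjective since the $y_k$ generate $M$.

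For the kernel, take an element written in the normal form \eqref{eq:el-of-Heis}. Its image is $\sum a_k y_k$ with $a_k\in[0;d_k-1]$. By the quasi-basis property this vanishes only if all $a_k=0$. Hence $\Ker\tau=\{\zeta^a\}$. By the uniqueness in \Cref{lem:el-of-Heis}, $\zeta$ has order exactly $\dbar$. Since $\zeta$ is central (from \eqref{eq:Heis-1}), the kernel is cyclic of order $\dbar$, i.e.\ isomorphic to $\mu_{\dbar}$.

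\emph{Axiom $(b)$.} Write $\xi=\zeta^2$. Consider, for fixed $g$, the set of $h$ with $gh=\xi^{\phi(\tau(g),\tau(h))}hg$, and symmetrically in $g$. The generator relations give the identity for pairs of generators, including $\zeta$, which is central and has $\tau(\zeta)=0$. Then extend by bilinearity. If the identity holds for $(g,h_1)$ and $(g,h_2)$, then
\[
gh_1h_2=\xi^{\phi(\tau g,\tau h_1)}h_1gh_2=\xi^{\phi(\tau g,\tau h_1)+\phi(\tau g,\tau h_2)}h_1h_2g,
\]
and since $\phi$ is additive this is the identity for $(g,h_1h_2)$. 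Inverses are handled similarly, and likewise in the first variable. So the identity holds on all of $G\times G$. The exponent is well defined modulo $d$ because $\xi$ has order $d$.

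\emph{Axiom $(c)$.} This is the step needing care. Take $m\in M$ and write it uniquely as $m=\sum a_ky_k$ with $a_k\in[0;d_k-1]$. Let $o$ be its order, which is $\mathrm{lcm}$ of the orders of the $a_ky_k$. The lift $g=Y_1^{a_1}\cdots Y_t^{a_t}$ has $g^o\in\Ker\tau$, so $g^o=\zeta^c$ for some $c$, which need not be $0$. We want to correct $g$ by a scalar $\zeta^s$, so we need $\zeta^{so+c}=1$, i.e.\ $so+c\equiv0\pmod{\dbar}$.

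To compute $c$, reorder powers using the commutation rule: $g^o=\xi^{-\binom{o}{2}\sum_{k<r}a_ka_r\phi(y_k,y_r)}\prod Y_k^{a_ko}$. Since $o$ is a multiple of the order of $a_ky_k$, and $Y_k$ has order $d_k$, each factor $Y_k^{a_ko}=1$. Hence $c=2e$ with $e=-\binom{o}{2}\sum_{k<r}a_ka_r\phi(y_k,y_r)\in\Zd$.

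Now we need $so\equiv-2e \pmod{\dbar}$. Note that $o\phi(y_k,m')=\phi(oy_k,m')$, and more to the point $o\,a_k\phi(y_k,y_r)=\phi(o a_k y_k,y_r)=0$, so $o$ annihilates each term $a_ka_r\phi(y_k,y_r)$ in $\Zd$.
\begin{itemize}
\item If $o$ is odd, $\binom{o}{2}=o\cdot\frac{o-1}{2}$, so $e\equiv0$ in $\Zd$ and $\xi^e=1$. Then $g$ itself works.
\item If $o$ is even, $\binom{o}{2}=\frac{o}{2}(o-1)$, so $e=\frac{o}{2}\,e'$ for some integer $e'$ (taking integer representatives), and then $2e\equiv o\,e'$. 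Choosing $s=-e'$ gives $so+2e\equiv0$ modulo $2d$, hence modulo $\dbar$. Then $\zeta^{s}g$ has order exactly $o$.
\end{itemize}
The main care is ensuring these congruences hold modulo $\dbar$ rather than $d$. For this one should work with integer lifts and use $o\mid d$, with $d$ even whenever $o$ is even.
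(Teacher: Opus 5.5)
Your proof is correct and follows essentially the same route as the paper: axioms $(a)$ and $(b)$ come from the normal form in \Cref{lem:el-of-Heis} and the defining relations, and $(c)$ comes from computing $g^o$ for $g=Y_1^{a_1}\cdots Y_t^{a_t}$ via the commutation rule and correcting by a power of $\zeta$ when $o$ is even. The only difference is in the even case: you solve directly for the correcting exponent using integer lifts, while the paper first observes that $g^o=\pm 1$ and then multiplies $g$ by $\zeta^{d/o}$.
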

\begin{proof}
By \Cref{lem:el-of-Heis}, the element $\zeta\in G$ is of order $\dbar$. So, the group $G$ contains a subgroup isomorphic to $\mu_\dbar$ generated by $\zeta$. The quotient of $G$ by this subgroup, in view of \Cref{lem:el-of-Heis}, is isomorphic to the abelian group $M$. This shows condition $(a)$ in \Cref{def:Heis}. Condition $(b)$ follows immediately from \eqref{eq:Heis-1}. 

Let us show $(c)$. Consider an element $m=\sum_{k=1}^ta_ky_k$ with $a_k\in [0;d_k-1]$. The order of $m$ is the minimal positive integer $a$ such that $d_k\mid aa_k$ for each $k$. Now, we want to find a lift of $m$ in $G$ of order $a$. Set $g=Y_1^{a_1}Y_2^{a_2}\ldots Y_t^{a_t}$. Then $g^a$ is equal to $Y_1^{aa_1}Y_2^{aa_2}\ldots Y_t^{aa_t}=1$ multiplied by 
\begin{equation}
\label{eq:power-of-xi}
\xi^{\sum_{k>r}\frac{a(a-1)}{2}a_ka_r\phi(y_k,y_r)}.
\end{equation} 
Since $y_k$ is of order $d_k$, we have $\phi(y_k,y_r)\in \frac{d}{d_k}\Zd$. Then we have $aa_k\phi(y_k,y_r)\in d_k\frac{d}{d_k}\Zd$, so $aa_k\phi(y_k,y_r)=0$ in $\Zd$. Then if $a$ is odd, $\frac{a-1}{2}$ is an integer and $\frac{a(a-1)}{2}a_ka_r\phi(y_k,y_r)=0$ in $\Zd$, so \eqref{eq:power-of-xi} is $1$ and we get $g^a=1$. However, for $a$ even (this is possible only if $d$ is even), the number $\frac{a-1}{2}$ is a half-integer, so we get $\frac{a(a-1)}{2}a_ka_r\phi(y_k,y_r)\in \frac{d}{2}\Zd$. Then \eqref{eq:power-of-xi} may be $1$ or $-1$, so $g^a=\pm 1$. In the case $g^a=-1$ the element $g$ has order $2a$ and not $a$, however we can fix this by multiplying $g$ by $\zeta^{d/a}$.

\end{proof}

\begin{prop}
\label{prop-Heis-unique}
For two Heisenberg extensions $\tau\colon G\to M$ and $\tau'\colon G'\to M$ there exists a (not necessarily unique) isomorphism $\iota\colon G\to G'$ of groups such that $\tau'\circ\iota=\tau$.
\end{prop}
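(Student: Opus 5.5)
It suffices to prove that every Heisenberg extension $\tau'\colon G'\to M$ admits such an isomorphism from the group $G$ given by generators \eqref{eq:Heis-1}, \eqref{eq:Heis-2}, built from a fixed quasi-basis $(y_1,\ldots,y_t)$ of orders $d_1,\ldots,d_t$. This group is a Heisenberg extension by \Cref{lem:constr-Heis}. For two arbitrary extensions $G,G'$ we then compose $G\simeq G_{\rm std}\simeq G'$.

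First I fix the central element. Axiom $(a)$ gives $\Ker\tau'\simeq\mu_\dbar$, which is cyclic of order $\dbar$. Axiom $(b)$ is stated with the specific scalar $\xi$, so this kernel has to be identified with scalars compatibly with $\xi$. I read the definition as saying that the kernel is $\mu_\dbar$, with $\xi=\zeta^2$ sitting inside it, and I take $\zeta'\in\Ker\tau'$ to be the element corresponding to $\zeta$. Applying $(b)$ to elements of the kernel shows that $\zeta'$ is central, since $\phi(0,\cdot)=0$. Then $\zeta'^{\dbar}=1$, and $\zeta'$ has exact order $\dbar$.

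Next I choose lifts. By axiom $(c)$, for each $k$ I pick $Y'_k\in G'$ with $\tau'(Y'_k)=y_k$ and of order exactly $d_k$. Then all relations of $G$ hold in $G'$: $\zeta'^{\dbar}=1$, $\zeta'$ is central, $Y'_kY'_r=\xi^{\phi(y_k,y_r)}Y'_rY'_k=\zeta'^{2\phi(y_k,y_r)}Y'_rY'_k$ by $(b)$, and $Y_k'^{d_k}=1$ by the choice of lifts. So there is a homomorphism $\iota\colon G\to G'$ with $Y_k\mapsto Y'_k$ and $\zeta\mapsto\zeta'$. It satisfies $\tau'\circ\iota=\tau$ on generators, hence everywhere.

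Finally I check that $\iota$ is bijective. It is surjective: the image contains $\Ker\tau'=\langle\zeta'\rangle$, and it surjects onto $M$ because the $y_k$ generate $M$. It is also a bijection by counting. By \Cref{lem:el-of-Heis}, $\#G=\dbar\prod d_k=\dbar\cdot\#M$, using that the quasi-basis gives $\#M=\prod d_k$. By $(a)$, $\#G'=\dbar\cdot\#M$ as well.

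The main subtlety is the identification of the kernel with $\mu_\dbar$ in a way compatible with $\xi$ in axiom $(b)$. Without a fixed identification, one would have to choose $\zeta'$ as a square root of the element acting as $\xi$ in the commutators; such a choice is possible whenever $\phi\neq 0$. The role of axiom $(c)$ is exactly to make the relation $Y_k'^{d_k}=1$ (rather than a power of $\zeta'$) available; this is the reason the axiom was added.
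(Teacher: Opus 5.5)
Your proof is correct and follows the paper's argument: reduce to the group presented by generators and relations from a quasi-basis, use axiom $(c)$ to lift the quasi-basis to elements of the same orders so that all defining relations hold in $G'$, and conclude by comparing cardinalities. The differences are minor. The paper checks injectivity of $\iota$ (its kernel lies in $\ker\tau=\langle\zeta\rangle$, on which $\iota$ is injective), whereas you check surjectivity. You also make explicit the identification of $\ker\tau'$ with $\mu_{\dbar}$ that the paper uses implicitly when it sends $\zeta$ to $\zeta$.
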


\begin{proof}
It is enough to prove the statement in the case when $\tau\colon G\to M$ is the Heisenberg extension of $(M,\phi)$ defined by \eqref{eq:Heis-1}, \eqref{eq:Heis-2}.

Let $\tau'\colon G'\to M$ be another Heisenberg extension. Let $Y'_k\in G'$ be a lift of $y_k$ in $G'$ having the same order. Then by \Cref{def:Heis}, the elements $Y'_1,\ldots,Y'_t$ must satisfy relations \eqref{eq:Heis-1}, \eqref{eq:Heis-2}. Then there is a group homomorphism $\iota\colon G\to G'$ sending $Y_k$ to $Y'_k$ and sending $\zeta\in G$ to $\zeta\in G'$. Then we automatically have $\tau'\circ \iota=\tau$. 

Let us check that $\iota$ is an isomorphism. Since $G$ and $G'$ have the same cardinalities, it is enough to check the injectivity of $\iota$. But since we have $ \tau'\circ\iota=\tau$, only elements of $\ker\tau=\mu_{\dbar}$ may be in the kernel of $\iota$. But since $\iota$ sends $\zeta$ to $\zeta$, the kernel of $\iota$ is trivial.
\end{proof}

\begin{df}
Denote by $Heis(M,\phi)$ the Heisenberg extension of $(M,\phi)$ (this group is defined up to an isomorphism). We may sometimes write simply $Heis(M)$ when the form $\phi$ is clear from the context.
\end{df}

The proof of \Cref{prop-Heis-unique} implies the following.

\begin{lem}
	\label{lem:lift-give-genrel}
	Let $(y_1,y_2,\ldots,y_t)$ be an arbitrary quasi-basis of $M$. Take lifts $Y_1,Y_2,\ldots,Y_t$ to $Heis(M,\phi)$ of these elements having the same orders. Then $Y_1,\ldots,Y_t,\zeta$ generate the group $Heis(M,\phi)$ and \eqref{eq:Heis-1}, \eqref{eq:Heis-2} is the full list of relations.	
\end{lem}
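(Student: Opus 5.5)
The plan is to reduce to the model group $G$ defined by \eqref{eq:Heis-1}, \eqref{eq:Heis-2} for this same quasi-basis, and then transport the statement along the isomorphism from \Cref{prop-Heis-unique}.

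\textbf{Step 1.} Let $G$ be the group given by generators $\widetilde Y_1,\ldots,\widetilde Y_t,\zeta$ and relations \eqref{eq:Heis-1}, \eqref{eq:Heis-2} for the chosen quasi-basis $(y_1,\ldots,y_t)$. By \Cref{lem:constr-Heis} it is a Heisenberg extension of $(M,\phi)$. By \Cref{lem:el-of-Heis}, its elements are exactly the normal forms \eqref{eq:el-of-Heis}, so its cardinality is $\dbar\cdot\prod_k d_k=\dbar\cdot\#M$.

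\textbf{Step 2.} Let $Y_1,\ldots,Y_t\in Heis(M,\phi)$ be the given lifts of $y_1,\ldots,y_t$ with the same orders $d_k$. They satisfy \eqref{eq:Heis-2} because $Y_k$ has order $d_k$. They satisfy \eqref{eq:Heis-1} by axiom $(b)$ of \Cref{def:Heis}, together with the fact that the central element $\zeta\in\ker\tau\simeq\mu_\dbar$ is a generator of order $\dbar$. This yields a homomorphism $\iota\colon G\to Heis(M,\phi)$ with $\widetilde Y_k\mapsto Y_k$ and $\zeta\mapsto\zeta$, and by construction $\tau\circ\iota=\tau_G$.

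\textbf{Step 3.} We show that $\iota$ is bijective, as in the proof of \Cref{prop-Heis-unique}. Any element of $\ker\iota$ lies in $\ker\tau_G=\langle\zeta\rangle$. Since $\iota$ restricted to $\langle\zeta\rangle$ is injective (it sends $\zeta$ to an element of order $\dbar$), $\ker\iota$ is trivial. Both groups have cardinality $\dbar\cdot\#M$ by axiom $(a)$, so $\iota$ is an isomorphism. Hence $Y_1,\ldots,Y_t,\zeta$ generate $Heis(M,\phi)$. Moreover, any relation among them pulls back to a relation in $G$, so it is a consequence of \eqref{eq:Heis-1}, \eqref{eq:Heis-2}; that is, this is the full list of relations.

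\textbf{Main obstacle.} The only delicate point is the meaning of ``$\zeta$'' in $Heis(M,\phi)$. One must fix a generator of $\ker\tau$ compatible with axiom $(b)$, so that commutators are $\xi^{\phi}=\zeta^{2\phi}$. I would fix this generator once and for all as the element identified with the root of unity $\zeta$ under $\ker\tau\simeq\mu_\dbar$. With that convention, the commutator relation in \eqref{eq:Heis-1} follows directly from $(b)$.
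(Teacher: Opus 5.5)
Your proposal is correct and follows the paper's own argument. The paper derives this lemma directly from the proof of \Cref{prop-Heis-unique}: it maps the model group defined by \eqref{eq:Heis-1}, \eqref{eq:Heis-2} for the given quasi-basis to $Heis(M,\phi)$ via the order-preserving lifts. This map is injective because its kernel lies in $\ker\tau\simeq\mu_{\dbar}$ and $\zeta\mapsto\zeta$, and it is therefore bijective by cardinality. Your remarks on why $\ker\tau$ is central and on fixing the generator $\zeta$ only make explicit what the paper leaves implicit.
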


\begin{lem}
	\label{lem:Heis-Sp-surj}
	For each symplectic automorphism $\psi\in Sp(M,\phi)$ we have an automorphism $\Psi$ of $Heis(M,\phi)$ such that $\psi\circ \tau=\tau\circ \Psi$ and $\Psi(\zeta)=\zeta$. 
\end{lem}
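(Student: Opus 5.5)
The approach is to build a second Heisenberg extension of $(M,\phi)$ by transporting the structure of $Heis(M,\phi)$ along $\psi$, and then apply the uniqueness result \Cref{prop-Heis-unique}. Write $G=Heis(M,\phi)$ with projection $\tau\colon G\to M$. I will show that the same group $G$ with the map $\tau'=\psi\circ\tau\colon G\to M$ is again a Heisenberg extension of $(M,\phi)$.

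Checking the three axioms of \Cref{def:Heis} for $\tau'$ is routine.

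\begin{itemize}
\item[$(a)$] Since $\psi$ is bijective, $\tau'$ is surjective with the same kernel $\ker\tau\simeq\mu_\dbar$.
\item[$(b)$] For $g,h\in G$ we have $gh=\xi^{\phi(\tau(g),\tau(h))}hg$. Since $\psi\in Sp(M,\phi)$, this exponent equals $\phi(\psi\tau(g),\psi\tau(h))=\phi(\tau'(g),\tau'(h))$.
\item[$(c)$] Given $m\in M$, the element $\psi^{-1}(m)$ has the same order as $m$, because $\psi$ is a group automorphism. By axiom $(c)$ for $\tau$, there is a lift $g$ of $\psi^{-1}(m)$ with $\tau(g)=\psi^{-1}(m)$ and the same order. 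Then $\tau'(g)=m$ and $g$ has the order of $m$.
\end{itemize}

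One subtlety must be handled: the identification of $\ker\tau$ with $\mu_\dbar$, in particular which element is $\zeta$. In the proof of \Cref{prop-Heis-unique}, the isomorphism $\iota$ sends $\zeta$ to $\zeta$. This is implicit in the setup, where a Heisenberg extension comes with the distinguished central element $\zeta$ generating the kernel, with $\xi=\zeta^2$ used in axiom $(b)$. Here the group and its kernel are literally the same for $\tau$ and $\tau'$, so I take the same $\zeta$ in both.

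Now apply the construction in the proof of \Cref{prop-Heis-unique}, with $(G,\tau)$ as the source and $(G,\tau')$ as the target. Fix a quasi-basis $(y_1,\dots,y_t)$ of $M$, and let $Y_k\in G$ be lifts under $\tau$ of the same orders, so that by \Cref{lem:lift-give-genrel} they, together with $\zeta$, generate $G$ with relations \eqref{eq:Heis-1}, \eqref{eq:Heis-2}. Choose $Y'_k\in G$ with $\tau'(Y'_k)=y_k$ and of the same order as $y_k$, which is possible by $(c)$ for $\tau'$. These $Y'_k$ satisfy the same relations: \eqref{eq:Heis-1} holds by $(b)$ for $\tau'$, and \eqref{eq:Heis-2} holds by the order condition. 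So there is a homomorphism $\iota\colon G\to G$ with $Y_k\mapsto Y'_k$, $\zeta\mapsto\zeta$ and $\tau'\circ\iota=\tau$. It is injective because its kernel lies in $\ker\tau$ and it fixes $\zeta$, hence bijective since $G$ is finite.

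Set $\Psi=\iota^{-1}$. Then $\Psi(\zeta)=\zeta$, and $\tau=\tau'\circ\iota=\psi\circ\tau\circ\iota$ gives $\tau\circ\Psi=\tau\circ\iota^{-1}=\psi\circ\tau\circ\iota\circ\iota^{-1}=\psi\circ\tau$, which is the required identity. The only step needing care is axiom $(c)$ for $\tau'$ together with the convention that $\zeta$ is preserved; both are settled above, so I expect no real obstacle.
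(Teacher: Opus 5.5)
Your proof is correct and is essentially the paper's argument. The paper lifts a quasi-basis $(y_k)$ and its image $(\psi(y_k))$ to elements of the same orders, then uses \Cref{lem:lift-give-genrel} to send the first lifts to the second while fixing $\zeta$. You run the same lifting-and-presentation argument inside the uniqueness proof of \Cref{prop-Heis-unique} for $\tau'=\psi\circ\tau$; this yields the automorphism for $\psi^{-1}$, which you then invert.
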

\begin{proof}
	Take an arbitrary quasi-basis $y_1,\ldots,y_t$ of $M$. Then $\psi$ sends this quasi-basis to another quasi-basis $(y'_1,\ldots,y'_t)$ with the same orders of elements. We can lift the quasi-basis to elements $Y_1,\ldots,Y_t\in Heis(M,\phi)$ (with the same orders) and we can lift similarly the second quasi-basis to $Y'_1,\dots,Y'_t\in Heis(M,\phi)$. By \Cref{lem:lift-give-genrel} the elements $Y_1,\ldots,Y_t,\zeta$ generate $Heis(M,\phi)$ and satisfy exactly the same relations as $Y'_1,\ldots,Y'_t,\zeta$. Then we can find an automorphism of $Heis(M,\phi)$ sending $Y_k$ to $Y'_k$ for $k\in [1;t]$.
\end{proof}

Conversely, it is clear from \Cref{def:Heis} $(b)$ that each automorphism of $Heis(M,\phi)$ fixing $\zeta$ induces (via $M\simeq Heis(M,\phi) /\mu_{\dbar}$) a symplectic automorphism of $(M,\phi)$. So we get a surjective group homomorphism $\Aut_\zeta(Heis(M,\phi))\to Sp(M,\phi)$, where $\Aut_\zeta$ denotes the subgroup of the full automorphism group $\Aut$ consisting of automorphisms fixing $\zeta$.

\subsection{$\zeta$-groups} 

\begin{df}
	For a finite group $G$ with an element $\zeta$ we say that $(G,\zeta)$ is a \emph{$\zeta$-group} (or simply that $G$ is a $\zeta$-group if $\zeta$ is clear) if $\zeta$ is a central element of order $\dbar$ in $G$. In cases where there is a risk of confusion, we may sometimes write $\zeta_G$ to distinguish the element $\zeta_G\in G$ from the primitive $\dbar$-th root $\zeta_\bbC\in \bbC$.
\end{df}

By definition, a $\zeta$-group contains a subgroup isomorphic to $\mu_\dbar$ generated by $\zeta$.

\begin{df}
	For a $\zeta$-group $(G,\zeta)$ denote by $\Aut_\zeta(G)$ the subgroup of the full automorphism group $\Aut(G)$ consisting of automorphisms that fix $\zeta$.
\end{df}

\begin{df}
	We say that $V$ is a \emph{$\zeta$-representation} of a $\zeta$-group $G$ if $V$ is a representation (over $\bbC$) of $G$ such that the element $\zeta_G\in G$ acts by the scalar $\zeta_\bbC$. 
\end{df}
In other words, a $\zeta$-representation of $G$ is nothing else than a representation of the following quotient of the group algebra: $\bbC G/(\zeta_G-\zeta_\bbC)$.

\begin{df}
	For two $\zeta$-groups $G_1$ and $G_2$ we define their $\zeta$-product $G_1\times_{\zeta} G_2$ as the direct product $G_1\times G_2$ modulo the relation identifying $\zeta_{G_1}$ with $\zeta_{G_2}$.
\end{df}

It is clear that if we have a $\zeta$-representation $V_1$ of $G_1$ and a $\zeta$-representation $V_2$ of $G_2$, then the tensor product $V_1\otimes V_2$ has a structure of a $\zeta$-representation of $G_1\times_\zeta G_2$. Moreover, if these representations $V_1$ and $V_2$ are irreducible, then the representation $V_1\otimes V_2$ of $G_1\times_\zeta G_2$ is also irreducible.

\begin{df}
	We say that a $\zeta$-group $(G,\zeta)$ is \emph{simple} (as a $\zeta$-group) if it has a unique (up to isomorphism) irreducible $\zeta$-representation. 
\end{df}
If $V$ is the unique irreducible $\zeta$-representation of $(G,\zeta)$ then we have an isomorphism of algebras $\bbC G/(\zeta_G-\zeta_\bbC)\simeq \End(V)$ (since every finite-dimensional simple algebra over $\mathbb{C}$ is isomorphic to $\End(V)$ for its unique irreducible representation $V$).

\begin{lem}
	\label{lem:product-of-simples}
	A $\zeta$-product of two simple $\zeta$-groups is simple.
\end{lem}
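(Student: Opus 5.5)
The plan is to translate the statement into one about group algebras and then use that a tensor product of full matrix algebras is again a full matrix algebra. For a $\zeta$-group $G$ write $A(G)=\bbC G/(\zeta_G-\zeta_\bbC)$, so that $\zeta$-representations of $G$ are exactly $A(G)$-modules. Since $A(G)$ is a quotient of the semisimple algebra $\bbC G$, it is a finite-dimensional semisimple algebra over $\bbC$. It is also nonzero, because $\zeta_G$ generates a copy of $\mu_\dbar$ and we may induce the character $\zeta_G\mapsto\zeta_\bbC$ from $\langle\zeta_G\rangle$ to $G$. By Artin--Wedderburn, $G$ is simple as a $\zeta$-group if and only if $A(G)$ is a simple algebra, i.e.\ $A(G)\simeq\End(V)$ for some $V$. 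In that case $V$ is the unique irreducible $\zeta$-representation, as remarked just before the statement.

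The first key step is to identify the algebras. I will show that
$$
A(G_1\times_\zeta G_2)\simeq A(G_1)\otimes_\bbC A(G_2).
$$
First, $\bbC(G_1\times G_2)\simeq \bbC G_1\otimes\bbC G_2$. Quotienting by the ideal generated by $\zeta_{G_1}-\zeta_\bbC$ and $\zeta_{G_2}-\zeta_\bbC$ gives $A(G_1)\otimes A(G_2)$. In this quotient the element $\zeta_{G_1}\zeta_{G_2}^{-1}$ already acts as $1$. Therefore the quotient factors through $\bbC(G_1\times_\zeta G_2)$, and it equals $A(G_1\times_\zeta G_2)$ because $\zeta_{G_1\times_\zeta G_2}$ is the common image of $\zeta_{G_1}$ and $\zeta_{G_2}$. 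To be careful, I will phrase this via universal properties: a homomorphism from $G_1\times_\zeta G_2$ into the units of an algebra $B$ sending $\zeta$ to $\zeta_\bbC$ is the same thing as a pair of commuting homomorphisms from $G_1$ and from $G_2$, each sending its $\zeta$ to $\zeta_\bbC$. Both algebras represent this functor, so they are isomorphic. I also need to check that $\zeta$ still has order $\dbar$ in $G_1\times_\zeta G_2$, so that it is a $\zeta$-group. This follows because the tensor product of a $\zeta$-representation of $G_1$ with one of $G_2$ is a $\zeta$-representation of the $\zeta$-product, as already noted in the text, and $\zeta$ acts on it by a scalar of order $\dbar$.

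With that isomorphism the conclusion is immediate: $\End(V_1)\otimes\End(V_2)\simeq\End(V_1\otimes V_2)$ is simple. Hence $G_1\times_\zeta G_2$ has a unique irreducible $\zeta$-representation, namely $V_1\otimes V_2$, which matches the remark preceding the lemma.

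The main obstacle is making the algebra isomorphism rigorous, since the quotient identifying $\zeta_{G_1}$ with $\zeta_{G_2}$ must be handled cleanly; the universal-property formulation is what I would try first. A fallback is a counting argument. Show that $V_1\otimes V_2$ is irreducible, which holds because its endomorphism algebra is spanned by the images of $G_1\times G_2$. Then show that every irreducible $\zeta$-representation $W$ is isomorphic to it: restricting $W$ to $G_1$ gives a multiple of $V_1$, so $W\simeq V_1\otimes\Hom_{G_1}(V_1,W)$, and $G_2$ acts irreducibly on the second factor, which must therefore be $V_2$.
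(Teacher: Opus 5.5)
Your proposal is correct, but your main route differs from the paper's.

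The paper argues with representations. It quotes the standard fact that the irreducible representations of $G_1\times G_2$ are exactly the $V_1\otimes V_2$ with each $V_i$ irreducible for $G_i$. It then observes that such a $V_1\otimes V_2$ is a $\zeta$-representation of $G_1\times_\zeta G_2$ precisely when each $V_i$ is a $\zeta$-representation of $G_i$: the central elements $\zeta_{G_i}$ act by scalars, and both scalars must equal $\zeta_\bbC$. Uniqueness follows at once.

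You instead work with the algebras $A(G)=\bbC G/(\zeta_G-\zeta_\bbC)$. You prove $A(G_1\times_\zeta G_2)\simeq A(G_1)\otimes A(G_2)\simeq \End(V_1\otimes V_2)$ and then invoke Artin--Wedderburn. The identification of ideals you flag as the main obstacle is harmless. In $\bbC(G_1\times G_2)$, the ideal generated by $\zeta_{G_1}-\zeta_\bbC$ and $\zeta_{G_2}-\zeta_\bbC$ equals the ideal generated by $\zeta_{G_1}\zeta_{G_2}^{-1}-1$ and $\zeta_{G_1}-\zeta_\bbC$, because each pair of generators lies in the other's ideal.

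The paper's argument is shorter, given the classical fact about direct products. Yours is more self-contained and gives two by-products:
\begin{itemize}
\item the algebra isomorphism $\bbC(G_1\times_\zeta G_2)/(\zeta-\zeta_\bbC)\simeq\End(V_1\otimes V_2)$, which the paper uses right afterwards;
\item the check that $\zeta$ still has order $\dbar$ in the $\zeta$-product, which the paper leaves implicit.
\end{itemize}
Your fallback argument is essentially a proof of the classical fact the paper cites, so that branch ends up on the paper's route.
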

\begin{proof}
	Let $G_1$ and $G_2$ be two simple $\zeta$-groups. It is well known that the irreducible representations of the usual direct product $G_1\times G_2$ are of the form $V_1\otimes V_2$. Moreover, such a representation $V_1\otimes V_2$ of $G_1\times G_2$ is a $\zeta$-representation of $G_1\times_\zeta G_2$ if and only if $V_1$ and $V_2$ are $\zeta$-representations of $G_1$ and $G_2$ respectively.
	
	Now, if $V_1$ is the unique $\zeta$-representation of $G_1$ and $V_2$ is the unique $\zeta$-representation of $G_2$, then 
	$V_1\otimes V_2$ is the unique $\zeta$-representation of $G_1\times_\zeta G_2$.
\end{proof}

	Assume that $(G,\zeta)$ is a simple $\zeta$-group, let $V$ be its irreducible representation. 
	\begin{lem}
		The natural homomorphism $G\to GL(V)$ is injective.
	\end{lem}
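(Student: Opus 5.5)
We prove that the kernel $K$ of the homomorphism $\rho\colon G\to GL(V)$ is trivial. The plan is to compare the group algebra with $\End(V)$. Since $(G,\zeta)$ is simple, we have the algebra isomorphism $\bbC G/(\zeta_G-\zeta_\bbC)\simeq \End(V)$, and this isomorphism is induced by $\rho$. Thus it suffices to show that distinct elements of $G$ have distinct images in $A:=\bbC G/(\zeta_G-\zeta_\bbC)$. If $g\ne h$ but $\rho(g)=\rho(h)$, then $g$ and $h$ already coincide in $A$, so we only need to understand when two group elements become equal in $A$.

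\textbf{Step 1: a basis of $A$.} The ideal $(\zeta_G-\zeta_\bbC)$ is generated by a central element. Choose coset representatives $g_1,\ldots,g_s$ of $G/\langle\zeta\rangle$. Every element of $G$ is uniquely $\zeta^a g_i$ with $a\in[0;\dbar-1]$, because $\zeta$ has order exactly $\dbar$. Hence $\bbC G=\bigoplus_i g_i\,\bbC\langle\zeta\rangle$ as a free module over $\bbC\langle\zeta\rangle\simeq\bbC[t]/(t^{\dbar}-1)$. The quotient of $\bbC[t]/(t^{\dbar}-1)$ by $(t-\zeta_\bbC)$ is $\bbC$, since $\zeta_\bbC$ is a root of $t^{\dbar}-1$. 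Therefore $A=\bigoplus_i \bbC\,\bar g_i$ has basis $\bar g_1,\ldots,\bar g_s$, and the image of $\zeta^a g_i$ in $A$ is $\zeta_\bbC^a\bar g_i$.

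\textbf{Step 2: conclusion.} Suppose $\rho(g)=\rho(h)$. Then $\bar g=\bar h$ in $A$. Writing $g=\zeta^a g_i$ and $h=\zeta^b g_j$, the basis property from Step 1 forces $i=j$ and $\zeta_\bbC^a=\zeta_\bbC^b$. Since $\zeta_\bbC$ is a primitive $\dbar$-th root of unity, this gives $a\equiv b \pmod{\dbar}$, so $g=h$.

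The main obstacle is Step 1, namely verifying that passing to the quotient by $\zeta_G-\zeta_\bbC$ does not collapse the coset representatives. This is exactly where we use that $\zeta_G$ has order $\dbar$ and that the cosets of $\langle\zeta\rangle$ index a basis. An alternative route is to observe that $\zeta$ acts on $V$ by $\zeta_\bbC$, so $K\cap\langle\zeta\rangle=\{1\}$, and then use that $\End(V)$ has dimension $\#G/\dbar$ to show $K$ is trivial. That route, however, amounts to the same basis computation.
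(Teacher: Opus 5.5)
Your argument is correct and is essentially the paper's proof, which deduces injectivity directly from the algebra isomorphism $\bbC G/(\zeta_G-\zeta_\bbC)\simeq \End(V)$. Your Step~1 makes explicit what the paper leaves implicit: distinct elements of $G$ stay distinct in the quotient algebra because $\zeta_G$ has order exactly $\dbar$. You assert without comment that this isomorphism is the action map induced by $\rho$, but that is harmless: the quotient algebra is simple, so the nonzero action map to $\End(V)$ is automatically injective.
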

\begin{proof}
	This follows from the isomorphism of algebras $\bbC G/(\zeta_G-\zeta_\bbC)\simeq \End(V)$.
\end{proof}
Since the representation $V$ of $G$ is irreducible, there exists a unique (up to scalar) $G$-invariant (hermitian) scalar product on $V$. In particular, in view of the lemma above, we can see $G$ as a subgroup of the unitary group $U(V)$.

\begin{lem}
	\label{lem:Aut-zetagr-irr-int}
	Each automorphism of the group $G$ fixing $\zeta$ is induced by conjugation by an element of $U(V)$.
\end{lem}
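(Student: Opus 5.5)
Let $\alpha\in\Aut_\zeta(G)$ and let $\rho\colon G\to GL(V)$ be the given irreducible $\zeta$-representation. I plan to twist $\rho$ by $\alpha$ and use the uniqueness in the definition of a simple $\zeta$-group. Concretely, I would consider the representation $\rho\circ\alpha\colon G\to GL(V)$. Because $\alpha$ is an automorphism, $\rho\circ\alpha$ has exactly the same image as $\rho$, so it is irreducible. Because $\alpha(\zeta_G)=\zeta_G$, the element $\zeta_G$ still acts by the scalar $\zeta_\bbC$, so $\rho\circ\alpha$ is an irreducible $\zeta$-representation. By simplicity it is isomorphic to $\rho$. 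Hence there is $T\in GL(V)$ with $\rho(\alpha(g))=T\rho(g)T^{-1}$ for all $g\in G$. Identifying $G$ with its image in $GL(V)$ (the homomorphism is injective by the previous lemma), this says that $\alpha$ is conjugation by $T$.

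The remaining work is to replace $T$ by a unitary operator; I expect this to be the only point needing care. Let $\langle\cdot,\cdot\rangle$ be the $G$-invariant hermitian scalar product fixed above. I would define a second form $\langle v,w\rangle'=\langle T^{-1}v,T^{-1}w\rangle$. This form is again hermitian and positive definite. It is also $G$-invariant, because
\[
\langle gv,gw\rangle'=\langle T^{-1}gv,T^{-1}gw\rangle=\langle \alpha^{-1}(g)T^{-1}v,\alpha^{-1}(g)T^{-1}w\rangle=\langle v,w\rangle'.
\]
Here I use $T^{-1}g=\alpha^{-1}(g)T^{-1}$ and the $G$-invariance of $\langle\cdot,\cdot\rangle$.

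By uniqueness up to scalar of the invariant scalar product on an irreducible representation (Schur's lemma), there is $c>0$ with $\langle\cdot,\cdot\rangle'=c\langle\cdot,\cdot\rangle$. Then $U=\sqrt{c}\,T$ satisfies $\langle U^{-1}v,U^{-1}w\rangle=\langle v,w\rangle$, so $U$ is unitary. Since scalars do not change conjugation, $U$ induces the same automorphism $\alpha$ as $T$, and this finishes the proof.

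Equivalently, I could argue with the polar decomposition of $T$. That route needs the same Schur-type argument to see that $T^*T$ is a scalar, so I would write the proof with the invariant-form argument above.
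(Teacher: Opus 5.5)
Your proposal is correct and follows the same route as the paper's proof: twist $\rho$ by the automorphism, use the uniqueness of the irreducible $\zeta$-representation to obtain an intertwiner $T$, then rescale $T$ using the uniqueness up to scalar of the $G$-invariant scalar product. Your invariant-form computation simply spells out the final rescaling step, which the paper states in one line.
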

\begin{proof}
	Let $\psi\colon G\to G$ be an automorphism of $G$ that fixes $\zeta$. Denote by $\rho$ the inclusion of $G$ into $GL(V)$. Set $\rho'=\rho\circ \psi$. There are two $G$-representation structures on the vector space $V$: the usual one (given by $\rho$) and the twisted one (given by $\rho'$). Both of them are irreducible $\zeta$-representations. However, since the $\zeta$-group $G$ has a unique $\zeta$-representation up to isomorphism, they must be isomorphic, i.e., there exists a linear map $f\colon V\to V$ such that $\rho'=f\circ \rho\circ f^{-1}$. Moreover, since the $G$-invariant scalar product on $V$ is unique up to scalar, we can multiply $f$ by a scalar to get an element of $U(V)$. We see now that the automorphism $\psi$ of $G$ is given by conjugation by $f$. 
\end{proof}

\subsection{Description of $Heis(M,\phi)$ for symplectic $(M,\phi)$}
\label{subs:descr-Heis-symp}

Let us describe the group $Heis(M,\phi)$ for all symplectic $(M,\phi)$. Assume first that $M$ is free and symplectic. 

\begin{lem}
\label{lem:Heis=Pauli}
	Assume that $M$ is a free finite $\Zd$-module of rank $2n$ and $\phi$ is symplectic. Then $Heis(M,\phi)$ is isomorphic to the Pauli group $\calP_n$.
\end{lem}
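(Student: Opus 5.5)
By \Cref{prop-Heis-unique}, any two Heisenberg extensions of the same $(M,\phi)$ are isomorphic. So the plan is to show that $\calP_n$, with the map $\tau\colon\calP_n\to\sM$ of \Cref{lem:isom-quot-Pauli}, is a Heisenberg extension of $(\sM,\phi)$. We then transport this along an isometry $(M,\phi)\simeq(\sM,\phi)$.

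First, by \Cref{lem:free-sympl}, the module $M$ has a symplectic basis $(e_1,\ldots,e_n,f_1,\ldots,f_n)$. Sending $e_i\mapsto z_i$ and $f_i\mapsto x_i$ defines an isomorphism of $\Zd$-modules $M\simeq\sM$ respecting the symplectic forms. Indeed, $\phi(z_i,x_j)=\delta_{i,j}$ and the other pairings vanish, exactly as for the symplectic basis. Hence $Heis(M,\phi)\simeq Heis(\sM,\phi)$, since a Heisenberg extension of one form composed with the isometry is a Heisenberg extension of the other.

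Next, we verify the axioms of \Cref{def:Heis} for $\tau\colon\calP_n\to\sM$.

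For $(a)$: $\tau$ is surjective by \Cref{lem:isom-quot-Pauli}. Its kernel is $\langle\zeta\rangle$, which has order $\dbar$ by \eqref{eq:Pauli-elements}, so it is isomorphic to $\mu_\dbar$.

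For $(b)$: this is exactly \Cref{lem:furmula-commut}.

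For $(c)$: this is the only step needing care. An element $m\in\sM$ has the form $m=\sum a_iz_i+\sum b_ix_i$. We choose the lift $g=\prod_i X_i^{b_i}Z_i^{a_i}$ and let $a$ be the order of $m$. Computing $g^a$ as in the proof of \Cref{lem:constr-Heis} shows $g^a=\xi^{s}$ with $s\in\frac d2\Zd$, so $g^a=\pm1$. If $g^a=-1$, which can only happen when $d$ is even, we replace $g$ by $\zeta^{d/a}g$. Then $(\zeta^{d/a}g)^a=\zeta^{d}g^a=(-1)(-1)=1$, because $\zeta^d=-1$ when $\dbar=2d$. This gives a lift of order exactly $a$; its order cannot be smaller, since it maps onto $m$, which has order $a$.

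A shortcut for $(c)$ is to use the quasi-basis $(z_1,\ldots,z_n,x_1,\ldots,x_n)$. By \Cref{coro:Pn-gen+rel}, the lifts $Z_r,X_r,\zeta$ satisfy exactly the relations \eqref{eq:Heis-1} and \eqref{eq:Heis-2}; the relation $X_kZ_k=\zeta^2Z_kX_k$ matches $\zeta^{2\phi(x_k,z_k)}$ up to the paper's sign convention. Hence $\calP_n$ coincides with the group $G$ constructed before \Cref{lem:el-of-Heis} for this quasi-basis. By \Cref{lem:constr-Heis}, $\calP_n$ is then a Heisenberg extension, and $(c)$ follows without the separate computation.

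The expected obstacle is only bookkeeping: checking that the sign conventions in $\phi(z_i,x_j)$ agree with the exponent in $X_kZ_k=\zeta^2Z_kX_k$. If they do not, we replace the symplectic basis by $(f_i,e_i)$ or $(e_i,-f_i)$, which still identifies the forms.
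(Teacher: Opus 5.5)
Your proposal is correct.

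\textbf{Relation to the paper's proof.} Your ``shortcut'' is essentially the paper's proof read in the opposite direction. The paper lifts a symplectic basis of $M$ to $Heis(M,\phi)$, obtains a complete presentation from \Cref{lem:lift-give-genrel}, and matches it with the presentation of $\calP_n$ in \Cref{coro:Pn-gen+rel}. You instead observe that $\calP_n$ already carries the defining presentation of the group $G$ for the quasi-basis $(z_i,x_i)$ of $\sM$.

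\textbf{Your main route.} This is a genuinely different, though closely related, argument. You verify axioms $(a)$--$(c)$ directly for $\tau\colon\calP_n\to\sM$: $(b)$ is \Cref{lem:furmula-commut}, and $(c)$ comes from the explicit computation of $g^a$ together with the $\zeta^{d/a}$ correction. You then conclude by \Cref{prop-Heis-unique} and the isometry $(M,\phi)\simeq(\sM,\phi)$ given by a symplectic basis. This costs a small computation, but it does not use any presentation of $\calP_n$. It also shows directly that the specific map $\tau$ is a Heisenberg extension, which is the form in which the fact is used later, e.g.\ when lifting with the same order in the proof of \Cref{thm:isom-NH/H-Heis-gen}. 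The paper's route is a one-line comparison once \Cref{lem:lift-give-genrel} and \Cref{coro:Pn-gen+rel} are in place.

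\textbf{The sign issue.} Your worry is justified. From $ZX=\xi XZ$ one gets $X_kZ_k=\zeta^{-2}Z_kX_k$, which is exactly $\zeta^{2\phi(x_k,z_k)}$. So the exponent $\zeta^{2}$ printed in \Cref{coro:Pn-gen+rel} is a sign slip. The paper's proof of this lemma lists the same slipped relation on the Heisenberg side, so its comparison remains valid once both are corrected. Your main route is unaffected, because \Cref{lem:furmula-commut} has the correct sign.

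The remedy you suggest is therefore unnecessary. Note also that $(f_i,e_i)$ and $(e_i,-f_i)$ are not symplectic bases. What they give is an isometry $(M,\phi)\simeq(\sM,-\phi)$, which is what you would need if the presentation really encoded $-\phi$; ``still identifies the forms'' should be read in that sense.
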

\begin{proof}
Take a symplectic basis $(e_1,\ldots,e_n,f_1,\ldots,f_n)$ of $M$. Let $Z_1,\ldots,Z_n,X_1,\ldots,X_n$ be the lifts to $Heis(M,\phi)$ of the elements of this basis (having the same order $d$). Then by \Cref{lem:lift-give-genrel} the group $Heis(M,\phi)$ is defined by generators $X_1,\ldots,X_n,Z_1,\ldots,Z_n,\zeta$ and the following relations:
$$
\zeta^{\dbar}=1,\qquad X_k\zeta=\zeta X_k,\qquad Z_k\zeta=\zeta Z_k,\qquad X_kX_r=X_rX_k,\qquad Z_kZ_r=Z_rZ_k,
$$
$$
X_kZ_r=Z_rX_k \mbox{ \rm(if $r\ne k$)}, \qquad X_kZ_k=\zeta^2Z_kX_k,\qquad X_k^d=1, \qquad Z_k^d=1.
$$

This is exactly the set of generators and relations for $\calP_n$, see \Cref{coro:Pn-gen+rel}.

\end{proof}

Let us now remove the assumption that $M$ is free.

\begin{lem}
	\label{lem:Heis-for-sum}
	Assume that $M_1$ and $M_2$ are two finite $\Zd$-modules with alternating forms $\phi_1$ and $\phi_2$ respectively. Consider the $\Zd$-module $M_1\oplus M_2$ together with the induced form $\phi_1\oplus \phi_2$ (making the two direct factors orthogonal). Then we have $Heis(M_1\oplus M_2,\phi_1\oplus \phi_2)\simeq Heis(M_1,\phi_1)\times_\zeta Heis(M_2,\phi_2)$.
\end{lem}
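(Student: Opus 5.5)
The plan is to avoid constructing the isomorphism by hand. Instead, we check that $G:=Heis(M_1,\phi_1)\times_\zeta Heis(M_2,\phi_2)$ satisfies the three axioms of \Cref{def:Heis} for $(M_1\oplus M_2,\phi_1\oplus\phi_2)$. Then \Cref{prop-Heis-unique} gives the isomorphism, since $Heis$ is only defined up to isomorphism. Write $G_i=Heis(M_i,\phi_i)$ with projections $\tau_i\colon G_i\to M_i$. Define $\tau\colon G_1\times G_2\to M_1\oplus M_2$ by $(g_1,g_2)\mapsto(\tau_1(g_1),\tau_2(g_2))$.

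First, $\tau$ factors through $G$. The relation identifying $\zeta_{G_1}$ with $\zeta_{G_2}$ is the quotient by the central subgroup $\{(\zeta^a,\zeta^{-a})\}$, and this subgroup lies in $\ker\tau_1\times\ker\tau_2$. The induced map $\tau\colon G\to M_1\oplus M_2$ is clearly surjective. Its kernel is the image of $\mu_\dbar\times\mu_\dbar$ modulo the antidiagonal, which is cyclic of order $\dbar$ and generated by the common $\zeta$. The one point to check is that $\zeta$ still has order $\dbar$ in $G$. This holds because $(\zeta^a,1)$ lies in the antidiagonal only if $\zeta^a=1$. This proves axiom $(a)$.

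Axiom $(b)$ is a direct computation on representatives. Take $g=(g_1,g_2)$ and $h=(h_1,h_2)$. Then
$gh=(g_1h_1,g_2h_2)=(\xi^{\phi_1(\tau_1 g_1,\tau_1 h_1)}h_1g_1,\ \xi^{\phi_2(\tau_2 g_2,\tau_2 h_2)}h_2g_2)$.
Since the scalars are identified in $G$, this equals $\xi^{(\phi_1\oplus\phi_2)(\tau g,\tau h)}hg$.

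Axiom $(c)$ is the step needing care; it is mild, but it is where evenness of $d$ could cause trouble. Take $m=(m_1,m_2)$, and let $a_i$ be the order of $m_i$, so the order of $m$ is $a=\mathrm{lcm}(a_1,a_2)$. By axiom $(c)$ for $G_i$, choose lifts $g_i$ of $m_i$ of order $a_i$. Then $g=(g_1,g_2)$ satisfies $g^a=(g_1^a,g_2^a)=1$ already in $G_1\times G_2$, since $a_i\mid a$. So the order of $g$ divides $a$. It is also at least $a$, because $\tau(g)=m$ has order $a$. Hence $g$ is a lift of $m$ of order exactly $a$, and no sign correction is needed. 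This completes the verification, and \Cref{prop-Heis-unique} gives the isomorphism; it is compatible with the projections to $M_1\oplus M_2$.

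As a fallback, one could work directly with presentations. Take the union of quasi-bases of $M_1$ and $M_2$, which is a quasi-basis of $M_1\oplus M_2$, and compare relations \eqref{eq:Heis-1}, \eqref{eq:Heis-2} via \Cref{lem:lift-give-genrel}. The mixed generators commute because $\phi(y,y')=0$ across the factors. The axiomatic route above is shorter, so I would use it.
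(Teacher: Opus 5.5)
Your proposal is correct and follows the paper's proof. You verify axioms $(a)$--$(c)$ of \Cref{def:Heis} for $Heis(M_1,\phi_1)\times_\zeta Heis(M_2,\phi_2)\to M_1\oplus M_2$ and conclude by \Cref{prop-Heis-unique}, handling $(c)$ as the paper does via a pair of order-preserving lifts whose product has order $\mathrm{lcm}(a_1,a_2)$; your checks that $\zeta$ keeps order $\dbar$ in the $\zeta$-product and your computation for $(b)$ simply fill in what the paper calls clear.
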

\begin{proof}
By definition, $Heis(M_1\oplus M_2,\phi_1\oplus \phi_2)$ is the unique group up to isomorphism with a homomorphism to $M_1\oplus M_2$ satisfying properties $(a)$, $(b)$, $(c)$ in \Cref{def:Heis}. On the other hand, the homomorphisms $Heis(M_1,\phi_1)\to M_1$ and $Heis(M_2,\phi_2)\to M_2$ induce a homomorphism $Heis(M_1,\phi_1)\times_\zeta Heis(M_2,\phi_2)\to M_1\oplus M_2$. We only have to check that it satisfies $(a)$, $(b)$, $(c)$. Properties $(a)$ and $(b)$ are clear. Let us check $(c)$.

Take $(m_1,m_2)\in M_1\oplus M_2$. We know that there is a lift $Y_1\in Heis(M_1,\phi_1)$ of $m_1$ and a lift $Y_2\in Heis(M_2,\phi_2)$ of $m_2$ having the same orders, let us call these orders $a_1$ and $a_2$ respectively. Then the order of $(m_1,m_2)$ in $M_1\oplus M_2$ is ${\rm lcm}(a_1,a_2)$. However, the images of the elements $Y_1$ and $Y_2$ in $Heis(M_1,\phi_1)\times_\zeta Heis(M_2,\phi_2)$ commute, so the order of their product is also ${\rm lcm}(a_1,a_2)$.
\end{proof}

We know by \Cref{lem:structure-sympl} that each symplectic $M$ is isomorphic to a direct sum of elementary symplectic modules $M\simeq S_{d_1}\oplus S_{d_2}\oplus\ldots\oplus S_{d_k}$. (We could additionally assume that $1<d_1 \mid  d_2 \mid  \cdots \mid  d_k$ if we want such a presentation to be unique. However, we do not make this assumption, since everything we claim below is true without this assumption.) Then, in view of \Cref{lem:Heis-for-sum}, it is enough to identify the Heisenberg extension for the elementary symplectic $\Zd$-modules $S_{d_0}$ with $d_0\mid d$.

Let us understand $Heis(S_{d_0})$. 
Let $\calP^{(d_0)}_n\subset GL((\bbC^{d_0})^{\otimes n})$ be the Pauli group for $n$ qudits of dimension $d_0$. (We may remove the upper index if $d_0=d$.) Denote by $\calP^{(d_0,\zeta)}_n$ the subgroup of $GL((\bbC^{d_0})^{\otimes n})$ generated by the operators coming from the action of $\calP^{(d_0)}_n$ and by multiplication by the primitive $\dbar$-th root of unity $\zeta$. (Note that the group $\calP^{(d_0)}_n$ itself contains only $\overline{d_0}$-th roots of unity.) For $d=d_0$, we clearly have $\calP^{(d,\zeta)}_n=\calP^{(d)}_n=\calP_n$.

\begin{lem}
	\label{lem:Heis-elem}
	We have $Heis(S_{d_0})\simeq \calP^{(d_0,\zeta)}_1$.
\end{lem}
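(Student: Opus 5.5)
The plan is to verify directly that the group $\calP^{(d_0,\zeta)}_1$, together with a suitable map $\tau'\colon \calP^{(d_0,\zeta)}_1\to S_{d_0}$, satisfies axioms $(a)$, $(b)$, $(c)$ of \Cref{def:Heis}. The isomorphism then follows from the uniqueness statement \Cref{prop-Heis-unique}. Write $X,Z$ for the $d_0$-dimensional clock and shift operators on $\bbC^{d_0}$, built with the primitive $d_0$-th root $\xi_0$. Since $\xi=\zeta^2$ is a primitive $d$-th root of unity, we may take $\xi_0=\xi^{d/d_0}=\zeta^{2d/d_0}$. Every element of $\calP^{(d_0,\zeta)}_1$ is then of the form $\zeta^cX^aZ^b$ with $a,b\in\bbZ_{d_0}$ and $c\in[0;\dbar-1]$. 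Indeed, the scalars in $\calP^{(d_0)}_1$ are $\overline{d_0}$-th roots of unity, and these are powers of $\zeta$, because $\overline{d_0}\mid\dbar$ (if $d_0$ is even, so is $d$). Moreover, these expressions are pairwise distinct as operators, because $\{X^aZ^b\}$ is a basis of $\End(\bbC^{d_0})$. We define $\tau'(\zeta^cX^aZ^b)=(b,a)\in\bbZ_{d_0}\oplus\bbZ_{d_0}$, so that $Z\mapsto (1,0)$ and $X\mapsto(0,1)$.

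Axiom $(a)$ is then immediate: $\tau'$ is a well-defined surjective homomorphism, since commutators are scalars and $X^{d_0}=Z^{d_0}=I$, and its kernel is the scalar subgroup $\langle\zeta\rangle\simeq\mu_\dbar$. For axiom $(b)$, we use $ZX=\xi_0XZ=\xi^{d/d_0}XZ$, while $\phi((1,0),(0,1))=d/d_0$ by the definition of $S_{d_0}$. Hence $gh=\xi^{\phi(\tau'(g),\tau'(h))}hg$ holds on generators. By bilinearity of the exponent in the images, it then holds for all $g,h$, exactly as in \Cref{lem:furmula-commut}. One should double-check the sign convention, that is, whether $ZX$ or $XZ$ carries the factor $\xi^{d/d_0}$, against the convention used for $\calP_n$. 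If it comes out reversed, we swap which coordinate $Z$ maps to.

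Axiom $(c)$ is the only step needing care, and I expect it to be the main obstacle. Take $m=(b,a)\in S_{d_0}$ of order $e\mid d_0$ and consider the lift $g=X^aZ^b$. By the same computation as in the proof of \Cref{lem:constr-Heis}, $g^e$ is a scalar equal to $\xi^{\pm\frac{e(e-1)}{2}ab\,d/d_0}$. The factor $e\cdot ab\cdot d/d_0$ is divisible by $d$, because $d_0\mid ea$. So if $e$ is odd, $g^e=1$. If $e$ is even, then $g^e=\pm1$. In the case $g^e=-1$, we replace $g$ by $\zeta^{\dbar/(2e)}g$, which is a multiplication by a square root of $-1$ raised to the appropriate power. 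This is legitimate because $2e\mid 2d_0\mid \dbar$ when $e$ is even, since then $d$ is even and $\dbar=2d$. The resulting element satisfies $(\zeta^{\dbar/(2e)}g)^e=\zeta^{\dbar/2}\cdot(-1)=1$. This is exactly where the extra scalar $\zeta$ in $\calP^{(d_0,\zeta)}_1$ is needed. Once $(a)$, $(b)$, $(c)$ are established, \Cref{prop-Heis-unique} gives $Heis(S_{d_0})\simeq\calP^{(d_0,\zeta)}_1$, compatibly with the maps to $S_{d_0}$.

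Alternatively, one could proceed in the style of \Cref{lem:Heis=Pauli}. By \Cref{lem:lift-give-genrel}, $Heis(S_{d_0})$ has the presentation with generators $Y_1,Y_2,\zeta$ and relations $Y_1^{d_0}=Y_2^{d_0}=1$, $Y_1Y_2=\zeta^{2d/d_0}Y_2Y_1$, together with $\zeta$ central of order $\dbar$. Sending $Y_1,Y_2$ to $Z,X$ gives a surjection onto $\calP^{(d_0,\zeta)}_1$. Comparing cardinalities, both groups have order $\dbar d_0^2$, so this surjection is an isomorphism.
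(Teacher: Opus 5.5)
Your proposal is correct, and your main argument takes a different route from the paper's. The paper argues only through presentations. By \Cref{lem:constr-Heis}, $Heis(S_{d_0})$ is generated by $X,Z,\zeta$ subject to $\zeta^{\dbar}=1$, $ZX=\zeta^{2d/d_0}XZ$ and $X^{d_0}=Z^{d_0}=1$. The paper then simply ``recognizes'' $\calP^{(d_0)}_1$ with $\zeta$ adjoined. That is your second, alternative argument. Your cardinality comparison fills in the step the paper leaves implicit, namely that the concrete group $\calP^{(d_0,\zeta)}_1$ satisfies no further relations. Both groups have order $\dbar d_0^2$: for the presented group this follows from \Cref{lem:el-of-Heis}, and for the concrete group from the basis $\{X^aZ^b\}$ of $\End(\bbC^{d_0})$.

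Your main argument instead verifies axioms $(a)$--$(c)$ of \Cref{def:Heis} directly on $\calP^{(d_0,\zeta)}_1$ and then invokes \Cref{prop-Heis-unique}. The advantage is that you never need a presentation of the concrete matrix group. The cost is that you must redo the order computation of \Cref{lem:constr-Heis} for axiom $(c)$, which you do correctly. Your correction factor $\zeta^{\dbar/(2e)}=\zeta^{d/e}$ is exactly the paper's $\zeta^{d/a}$, and this is where the extra scalar $\zeta$ is genuinely needed.

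On your sign worry: the explicit matrices give $ZX=\xi_0XZ$ with $\xi_0=\xi^{d/d_0}$. This agrees with $\phi((1,0),(0,1))=d/d_0$ and the convention of \Cref{lem:furmula-commut}, so the assignment $Z\mapsto(1,0)$, $X\mapsto(0,1)$ works as stated, with no swap needed.
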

\begin{proof}

By \Cref{lem:constr-Heis}, the group $Heis(S_{d_0})$ is defined by generators $(X,Z,\zeta)$ and the following relations 
$$
\zeta^{\dbar}=1, \qquad ZX=\zeta^{2\frac{d}{d_0}} XZ,\qquad X^{d_0}=Z^{d_0}=1.
$$

We recognize here the Pauli group $\calP^{(d_0)}_1$ with the addition of the extra $\dbar$-th root of unity $\zeta$.

\end{proof}

Now, combining \Cref{lem:Heis-for-sum} with \Cref{lem:Heis-elem}, we get the following.
\begin{prop}
	\label{prop:strucute-sympl-Heis}
	If we have $M\simeq \bigoplus_{r=1}^k S_{d_r}$, then 
	$$
	Heis(M,\phi)\simeq \calP_1^{(d_1,\zeta)}\times_\zeta \calP_1^{(d_2,\zeta)}\times_\zeta \ldots\times_\zeta\calP_1^{(d_k,\zeta)}.
	$$
\end{prop}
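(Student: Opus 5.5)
The plan is an induction on $k$ that peels off one orthogonal summand at a time, using \Cref{lem:Heis-for-sum} for the splitting and \Cref{lem:Heis-elem} for each elementary piece.

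First, I fix the given isomorphism of $\Zd$-modules $M\simeq \bigoplus_{r=1}^k S_{d_r}$ respecting the symplectic forms. I regard the right-hand side as the orthogonal direct sum of the symplectic modules $(S_{d_r},\phi_r)$. Since $Heis(M,\phi)$ depends only on $(M,\phi)$ up to isomorphism, I may replace $(M,\phi)$ by this orthogonal sum. This is justified by \Cref{prop-Heis-unique}: transport the Heisenberg extension along the isometry, and axioms $(a)$, $(b)$, $(c)$ of \Cref{def:Heis} are preserved.

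Next, I induct on $k$. For $k=1$ the claim is exactly \Cref{lem:Heis-elem}. For $k>1$, I write $M=M'\oplus S_{d_k}$ with $M'=\bigoplus_{r<k}S_{d_r}$, an orthogonal decomposition. \Cref{lem:Heis-for-sum} gives
$$Heis(M)\simeq Heis(M')\times_\zeta Heis(S_{d_k}).$$
By induction, $Heis(M')\simeq \calP_1^{(d_1,\zeta)}\times_\zeta\cdots\times_\zeta\calP_1^{(d_{k-1},\zeta)}$, and $Heis(S_{d_k})\simeq\calP_1^{(d_k,\zeta)}$ by \Cref{lem:Heis-elem}.

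The only point needing a word is that the $\zeta$-product is compatible with isomorphisms of $\zeta$-groups that fix $\zeta$. This holds because the isomorphisms above send $\zeta$ to $\zeta$: in \Cref{prop-Heis-unique} the isomorphism is compatible with $\tau$ and is built to send $\zeta\mapsto\zeta$, and in \Cref{lem:Heis-elem} the generator $\zeta$ corresponds to the scalar $\zeta$. The $\zeta$-product is also associative, since an iterated $\zeta$-product is the direct product modulo identifying all the $\zeta$'s. Hence the isomorphisms assemble into the claimed one.

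I expect no real obstacle. The mildly delicate point is this bookkeeping: each isomorphism used must fix $\zeta$, so that the quotients defining $\times_\zeta$ match up. I would check it directly from the generators-and-relations descriptions (\Cref{lem:lift-give-genrel}).
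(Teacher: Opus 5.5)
Your proposal is correct and is essentially the paper's argument: the paper obtains the proposition directly by combining \Cref{lem:Heis-for-sum} with \Cref{lem:Heis-elem}, which is exactly your induction on $k$. Your explicit check that all the isomorphisms involved fix $\zeta$, so that the $\zeta$-products match up, spells out bookkeeping that the paper leaves implicit.
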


Note also that we clearly have 
$$
\calP^{(d_0,\zeta)}_n\simeq  \calP_1^{(d_0,\zeta)}\times_\zeta \calP_1^{(d_0,\zeta)}\times_\zeta \ldots\times_\zeta\calP_1^{(d_0,\zeta)} \quad\mbox{ \rm($n$ times)}.
$$

Now, let us remark that the $\zeta$-group $Heis(S_{d_0})\simeq \calP^{(d_0,\zeta)}_1$ is clearly a simple $\zeta$-group because we have $\bbC\calP^{(d_0,\zeta)}_1/(\zeta_\calP-\zeta_\bbC)\simeq \End(\bbC^{d_0})$ and then $\bbC^{d_0}$ is the unique simple $\zeta$-representation. Then by \Cref{lem:Heis-for-sum} together with \Cref{lem:product-of-simples} we get the following.

\begin{lem}
	\label{lem:Heis-symple-simple}
	If $(M,\phi)$ is symplectic, then the $\zeta$-group $Heis(M,\phi)$ is simple. If $M\simeq \bigoplus_{r=1}^k S_{d_r}$, then the unique simple $\zeta$-representation of 
	$$
	Heis(M,\phi)\simeq \calP_1^{(d_1,\zeta)}\times_\zeta \calP_1^{(d_2,\zeta)}\times_\zeta \ldots\times_\zeta\calP_1^{(d_k,\zeta)}
	$$ 
	is $\bbC^{d_1}\otimes \bbC^{d_2}\otimes \ldots \otimes\bbC^{d_k}$.
\end{lem}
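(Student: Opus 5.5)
The plan is to get both claims of \Cref{lem:Heis-symple-simple} directly from the structure results already established, so no new computation should be needed.

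\emph{Step 1 (decomposition).} By \Cref{lem:structure-sympl}, every nonzero symplectic $(M,\phi)$ is isomorphic, compatibly with the form, to an orthogonal sum $\bigoplus_{r=1}^k S_{d_r}$. By \Cref{prop-Heis-unique}, the Heisenberg extension depends only on $(M,\phi)$ up to isomorphism. The case $M=0$ is trivial: then $Heis(M)=\mu_{\dbar}$, which is simple with the one-dimensional representation $\bbC$, the empty tensor product. So we may assume $M=\bigoplus_r S_{d_r}$.

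\emph{Step 2 (reduction to factors).} Applying \Cref{lem:Heis-for-sum} inductively on $k$, we get $Heis(M,\phi)\simeq Heis(S_{d_1})\times_\zeta\cdots\times_\zeta Heis(S_{d_k})$. Then \Cref{lem:Heis-elem} identifies each factor with $\calP_1^{(d_r,\zeta)}$. This is exactly \Cref{prop:strucute-sympl-Heis}. One point needs checking: the iterated $\zeta$-product must again be a $\zeta$-group, i.e.\ $\zeta$ must still have order $\dbar$. This holds because the product is itself a Heisenberg extension, by \Cref{lem:Heis-for-sum}.

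\emph{Step 3 (simplicity of each factor).} The relations of \Cref{lem:Heis-elem} give the following. Each element of $\calP_1^{(d_0,\zeta)}$ is $\zeta^cX^aZ^b$ with $a,b\in[0;d_0-1]$. The operators $X^aZ^b$ form a basis of $\End(\bbC^{d_0})$, because $\xi^{d/d_0}$ is a primitive $d_0$-th root of unity. Hence $\bbC\calP_1^{(d_0,\zeta)}/(\zeta_\calP-\zeta_\bbC)\simeq\End(\bbC^{d_0})$. Concretely, the map is surjective onto $\End(\bbC^{d_0})$, and the quotient is spanned by $d_0^2$ elements, so it is an isomorphism. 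A quotient algebra isomorphic to a full matrix algebra has a unique simple module, namely $\bbC^{d_0}$.

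\emph{Step 4 (conclusion).} Induction with \Cref{lem:product-of-simples} shows that the $\zeta$-product is simple, and that its unique irreducible $\zeta$-representation is the tensor product $\bbC^{d_1}\otimes\cdots\otimes\bbC^{d_k}$ of the factor representations. I expect no real obstacle. The only delicate point is the dimension count in Step 3: one must make sure that the quotient algebra has dimension at most $d_0^2$, rather than merely knowing that it surjects onto $\End(\bbC^{d_0})$. That count follows from the normal form of \Cref{lem:el-of-Heis}.
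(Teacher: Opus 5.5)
Your proposal is correct and follows the paper's own argument. The paper likewise shows that each elementary factor $Heis(S_{d_0})\simeq\calP_1^{(d_0,\zeta)}$ is a simple $\zeta$-group via $\bbC\calP_1^{(d_0,\zeta)}/(\zeta_\calP-\zeta_\bbC)\simeq\End(\bbC^{d_0})$, and then combines \Cref{lem:Heis-for-sum} with \Cref{lem:product-of-simples}. Your additions, namely the dimension count that justifies the algebra isomorphism and the separate treatment of $M=0$, just make explicit what the paper calls ``clear''.
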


\begin{rk}
	\label{rk:order-d-Chinese}
	If $d_1$ and $d_2$ are positive integers such that $d_1d_2\mid d$ and $gcd(d_1,d_2)=1$, then by the Chinese remainder theorem we get $S_{d_1d_2}\simeq S_{d_1}\oplus S_{d_2}$. Using \Cref{lem:Heis-for-sum} and \Cref{lem:Heis-elem}, we transform this to the level of the Pauli groups: 
	\begin{equation}
	\label{eq:Ch-for-Pauli}
	\calP_1^{(d_1d_2,\zeta)}\simeq\calP_1^{(d_1,\zeta)}\times_\zeta \calP_1^{(d_2,\zeta)}.
	\end{equation}
	
	Now, even if $d_1,d_2,\ldots,d_k$ in $\calP_1^{(d_1,\zeta)}\times_\zeta \calP_1^{(d_2,\zeta)}\times_\zeta \ldots\times_\zeta\calP_1^{(d_k,\zeta)}$ are arbitrary positive divisors of $d$, we can always decompose each $d_r$ into powers of primes and then regroup the factors (using \eqref{eq:Ch-for-Pauli}) to get an expression of the form $\calP_1^{(d_1,\zeta)}\times_\zeta \calP_1^{(d_2,\zeta)}\times_\zeta \ldots\times_\zeta\calP_1^{(d_k,\zeta)}$ with $d_1\mid d_2\mid \ldots\mid d_k$.
\end{rk}

\begin{rk}
	\label{rk:proof-surj-Cl-Sp}
	Assume that $M$ is free of rank $2n$ and symplectic. Then, as we have seen in \Cref{lem:Heis=Pauli}, $Heis(M,\phi)$ is isomorphic to the Pauli group $\calP_n$. Then \Cref{lem:Heis-Sp-surj} together with \Cref{lem:Aut-zetagr-irr-int} and \Cref{lem:Heis-symple-simple} give an alternative proof of \Cref{prop:surj}.
\end{rk}

\begin{lem}
	\label{lem:dim-unique-rep}
	If $(M,\phi)$ is symplectic and $V$ is the unique simple $\zeta$-representation of $Heis(M,\phi)$, then we have $\# Heis(M,\phi)=\dbar\cdot (\dim V)^2$.
\end{lem}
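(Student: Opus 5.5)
Two routes are available: a direct count via the explicit decomposition, or a representation-theoretic argument via the quotient algebra. I will give the direct count as the main proof and keep the algebraic one as a cross-check.

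\textbf{Step 1 (reduce to a count on $M$).} By \Cref{def:Heis} $(a)$, $\tau$ is surjective with kernel $\mu_{\dbar}$, so $\# Heis(M,\phi)=\dbar\cdot \# M$. It then suffices to prove $\# M=(\dim V)^2$.

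\textbf{Step 2 (decompose $M$).} By \Cref{lem:structure-sympl}, $M\simeq \bigoplus_{r=1}^k S_{d_r}$. Hence $\# M=\prod_r d_r^2$, since each $S_{d_r}=\bbZ_{d_r}\oplus\bbZ_{d_r}$ has $d_r^2$ elements.

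\textbf{Step 3 (identify $V$).} By \Cref{lem:Heis-symple-simple}, $V\simeq \bbC^{d_1}\otimes\ldots\otimes\bbC^{d_k}$, so $\dim V=\prod_r d_r$. Therefore $(\dim V)^2=\prod_r d_r^2=\# M$, which gives the claim. The case $M=0$ is trivial: then $Heis(M)=\mu_{\dbar}$, $\dim V=1$, and both sides equal $\dbar$.

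\textbf{Cross-check via the algebra.} Since $V$ is the unique simple $\zeta$-representation, $\bbC G/(\zeta_G-\zeta_\bbC)\simeq \End(V)$. It remains to compute the dimension of the left side. The group algebra $\bbC G$ splits under the central element $\zeta_G$ into $\dbar$ eigen-quotients, one for each $\dbar$-th root $\omega$ with $\zeta_G\mapsto\omega$. In each quotient, choosing a section $M\to G$, the images of the $\# M$ section elements span, because $G=\bigsqcup_{m}\sigma(m)\mu_{\dbar}$. The dimensions of all quotients sum to $\#G=\dbar\cdot\#M$, so each quotient has dimension exactly $\#M$. This gives $(\dim V)^2=\# M$ again, but the equidistribution across the quotients needs an extra argument, which is why I rely on the direct count above.

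The only point requiring care is that \Cref{lem:Heis-symple-simple} identifies the simple $\zeta$-representation compatibly with the decomposition. This holds because $Heis$ is determined up to isomorphism (\Cref{prop-Heis-unique}), and the uniqueness of $V$ does not depend on the chosen isomorphism.
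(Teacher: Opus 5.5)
Your proof is correct, but your main argument takes a different route from the paper. The paper's proof is the one you kept as a cross-check. Since $G=Heis(M,\phi)$ is a simple $\zeta$-group, $\bbC G/(\zeta_G-\zeta_\bbC)\simeq\End(V)$. The left-hand side has dimension $\#G/\dbar$, so $\#G=\dbar\cdot(\dim V)^2$.

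The two routes compare as follows.
\begin{itemize}
\item The paper's argument uses nothing about $M$ beyond the simplicity of the $\zeta$-group. It is therefore a general fact about simple $\zeta$-groups.
\item Your direct count goes through the structure theorem \Cref{lem:structure-sympl} and the explicit identification $V\simeq\bbC^{d_1}\otimes\cdots\otimes\bbC^{d_k}$ from \Cref{lem:Heis-symple-simple}. This is more concrete and equally valid. There is no circularity, since both lemmas precede this one and the isomorphisms involved send $\zeta$ to $\zeta$. The cost is that you use heavier, $M$-specific input for what is really a generic statement.
\end{itemize}

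Your worry about ``equidistribution'' in the cross-check is unfounded. You already showed two things: each of the $\dbar$ quotients $\bbC G/(\zeta_G-\omega)$ is spanned by $\#M$ elements, and their dimensions sum to $\#G=\dbar\cdot\#M$. Together these force every quotient to have dimension exactly $\#M$. Alternatively, $\bbC G$ is free over $\bbC\langle\zeta_G\rangle$ with basis any set of coset representatives, which gives the same conclusion. So your cross-check is already a complete proof, and it coincides with the paper's.
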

\begin{proof}
	This follows from the isomorphism $\bbC Heis(M,\phi)/(\zeta_{Heis(M,\phi)}-\zeta_\bbC)\simeq \End(V)$.
\end{proof}

\subsection{The description of $N(H)/H$}

\begin{lem}
	Let $\tau\colon G\to M$ be a Heisenberg extension of $(M,\phi)$. Then for an arbitrary $\Zd$-submodule $N$ (equivalently, for an arbitrary subgroup), the preimage $\tau^{-1}(N)$ is a Heisenberg extension of $N$ (with the restriction of $\phi$).
\end{lem}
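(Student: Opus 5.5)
We check the three axioms of \Cref{def:Heis} for the restriction $\tau|_{\tau^{-1}(N)}\colon \tau^{-1}(N)\to N$. Set $G_N=\tau^{-1}(N)$; it is a subgroup of $G$ because $N$ is a subgroup of $M$ and $\tau$ is a homomorphism. The form on $N$ is the restriction $\phi|_{N\times N}$, which is still $\Zd$-bilinear and alternating, so it is admissible in \Cref{def:Heis}. Note that $N$ is finitely generated, since $M$ is finite or finitely generated over $\Zd$.

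For axiom $(a)$, the restricted map $G_N\to N$ is surjective by construction, since every element of $N$ has a preimage in $G$, and that preimage lies in $G_N$ by definition. Its kernel is $G_N\cap\ker\tau=\ker\tau$, because $0\in N$ gives $\ker\tau\subset G_N$. Hence the kernel is isomorphic to $\mu_{\dbar}$, with the same distinguished generator $\zeta$. For axiom $(b)$, take $g,h\in G_N\subset G$. Then $gh=\xi^{\phi(\tau(g),\tau(h))}hg$ holds in $G$, and therefore also in the subgroup $G_N$, where the exponent is now computed with the restricted form.

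For axiom $(c)$, let $m\in N$. By $(c)$ for $G$ there is a preimage $g\in G$ of $m$ whose order equals the order of $m$. This $g$ lies in $G_N$ because $\tau(g)=m\in N$, and its order is the same whether computed in $G_N$ or in $G$, since $G_N$ is a subgroup.

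There is no real obstacle. The only point to keep in mind is that axiom $(c)$ passes to subgroups precisely because the witnessing lift automatically lies in the preimage, so no new lift has to be constructed. Combined with \Cref{prop-Heis-unique}, this shows $\tau^{-1}(N)\simeq Heis(N,\phi|_N)$.
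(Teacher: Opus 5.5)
Your proof is correct and takes the same approach as the paper. The paper simply asserts that the restricted map $\tau^{-1}(N)\to N$ obviously satisfies axioms $(a)$--$(c)$ of \Cref{def:Heis}, and you have written out exactly those checks: the kernel is still $\ker\tau$ because $\ker\tau=\tau^{-1}(0)\subset\tau^{-1}(N)$, and the order-preserving lift of $m\in N$ automatically lies in $\tau^{-1}(N)$.
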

\begin{proof}
	The statement follows immediately from the fact that the homomorphism from $\tau^{-1}(N)$ to $N$ obviously satisfies the axioms in \Cref{def:Heis}.
\end{proof}

\begin{thm}
	\label{thm:isom-NH/H-Heis-gen}
	Let $H$ be a commutative subgroup of $\calP_n$ satisfying \eqref{eq:cond-H-1}. Then $N(H)/H$ is isomorphic to the Heisenberg extension of $\tau(H)^\perp/\tau(H)$ (with the form induced by the symplectic form $\phi$ on $\sM$).
\end{thm}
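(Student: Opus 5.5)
Write $L=\tau(H)\subset \sM$. By \Cref{rk:norm=centr}, $N(H)$ is the centraliser of $H$, and by \Cref{lem:furmula-commut} an element $x\in\calP_n$ commutes with all of $H$ if and only if $\tau(x)\in L^\perp$. Hence $N(H)=\tau^{-1}(L^\perp)$. By the lemma just proved, $N(H)$ is a Heisenberg extension of $(L^\perp,\restr{\phi}{L^\perp})$. Here $L\subset L^\perp$ because $H$ is commutative. The form on $L^\perp$ descends to an alternating form $\bar\phi$ on $L^\perp/L$, since $L$ lies in the kernel of $\restr{\phi}{L^\perp}$. So the plan is to check directly that the composite $\bar\tau\colon N(H)/H\to L^\perp/L$ satisfies axioms $(a)$, $(b)$, $(c)$ of \Cref{def:Heis}. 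Then \Cref{prop-Heis-unique} gives the isomorphism.

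For $(a)$, first note that $H$ is normal in $N(H)$, so $N(H)/H$ is a group. The map $N(H)\to L^\perp\to L^\perp/L$ is surjective, and it sends $H$ to $0$. Its kernel is $\tau^{-1}(L)=H\cdot\langle\zeta\rangle$. Condition \eqref{eq:cond-H-1} gives $H\cap\langle\zeta\rangle=\{I\}$, so $\tau|_H$ is injective and $\tau^{-1}(L)=H\times\langle\zeta\rangle$. Thus the kernel of $\bar\tau$ is the image of $\langle\zeta\rangle$, which is isomorphic to $\mu_\dbar$. Axiom $(b)$ is inherited from \Cref{lem:furmula-commut}, because $\bar\phi(\bar\tau(g),\bar\tau(h))=\phi(\tau(g),\tau(h))$ for any lifts.

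The main obstacle is $(c)$: each $\bar m\in L^\perp/L$ of order $a$ should have a preimage in $N(H)/H$ of order exactly $a$. I would first pick a lift $m\in L^\perp$ and $g\in N(H)$ over it. Then $g^a$ has $\tau(g^a)=am\in L$, so $g^a=\zeta^c h$ with $h\in H$, and therefore $(gH)^a=\zeta^cH$. We need to kill $\zeta^c$ by modifying $g$. Multiplying $g$ by $h'\in H$ commuting with $g$ gives $(gh')^a=g^ah'^a$, which only moves the $H$-part; so the real freedom is replacing $g$ by $\zeta^e g$, changing $c$ by $ae$. It therefore suffices to show that $\zeta^c\in\langle\zeta^a\rangle$.

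To do this, I would use the freedom in choosing the lift $m$ and exploit axiom $(c)$ for $\calP_n$ itself. Writing $am=\tau(h_0)$, replace $m$ by a lift $m-\ell$ with $\ell\in L$ chosen so that the order of $m-\ell$ in $\sM$ is as small as possible. An alternative is to split $\langle m\rangle+L$ using \Cref{prop:classif-Zd-mod}. Axiom $(c)$ in $\calP_n$ then gives $g$ with $g^{a'}=1$, where $a'$ is the order of $m-\ell$ and $a\mid a'$. Then $g^a$ has order $a'/a$ and lies in $\zeta^cH$. From this we get $\zeta^{ca'/a}\in H\cap\langle\zeta\rangle=\{I\}$, so $\dbar\mid c\,a'/a$. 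The remaining point is to deduce $\zeta^c\in\langle\zeta^a\rangle$. When $\dbar=d$ this is automatic up to parity-free arithmetic, and in general I would adjust by $\zeta^{e}$ as in the end of the proof of \Cref{lem:constr-Heis}. I expect this last arithmetic step, especially the even-$d$ factor of $2$, to need the most care.

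Once $(c)$ holds, \Cref{prop-Heis-unique} yields $N(H)/H\simeq Heis(\tau(H)^\perp/\tau(H))$.
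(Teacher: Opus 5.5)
Your proposal follows the paper's proof. You identify $N(H)=\tau^{-1}(\tau(H)^\perp)$ and check axioms $(a)$ and $(b)$ directly. For $(c)$, you take $g\in N(H)$ whose order equals the order in $\sM$ of a chosen lift $m$ of $\bar m$, use \eqref{eq:cond-H-1} to control the scalar part of $g^a$, and correct $g$ by a root of unity. However, you do not actually carry out the last step, and you suggest it may need a parity correction like the one in \Cref{lem:constr-Heis}.

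That step is not delicate, and no parity correction is needed. Put $b=a'/a$.
\begin{itemize}
\item From $\dbar\mid cb$ and $b\mid\dbar$ you get $\frac{\dbar}{b}\mid c$.
\item From $ab=a'\mid d\mid\dbar$ you get $a\mid\frac{\dbar}{b}$.
\item Hence $a\mid c$ for every integer representative of $c$.
\end{itemize}
So $g'=\zeta^{-c/a}g$ satisfies $g'^a\in H$. Then $g'H$ is a lift of $\bar m$ of order exactly $a$: its order divides $a$, and its image $\bar m$ has order $a$. This is precisely the paper's final step. The paper notes that the scalar $\omega$ with $g^a\in\omega H$ is a $b$-th root of unity, and takes an $a$-th root of it inside $\mu_d$, which is possible because $ab\mid d$.

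Minimising the order of the lift $m-\ell$ is unnecessary: any lift $m\in\tau(H)^\perp$ works. Your own observation already shows this, since changing the lift only changes $c$ by multiples of $a$.
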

\begin{proof}
	Since $N(H)$ is the same thing as the centralizer of $H$ in $\calP_n$, see \Cref{rk:norm=centr}, an element $g\in \calP_n$ commutes with $H$ if and only if its image $\tau(g)$ is orthogonal to $\tau(H)$. In other words, we get that $N(H)$ is the preimage of $\tau(H)^\perp$ with respect to $\tau$. Now, consider the homomorphism $N(H)/H\to \tau(H)^\perp/\tau(H)$. Let us check that it satisfies the axioms in \Cref{def:Heis}. Axiom $(b)$ is immediate. Let us check $(a)$ and $(c)$.
	
	The kernel of $N(H)/H\to \tau(H)^\perp/\tau(H)$ is isomorphic to $\tau^{-1}(\tau(H))/H$. However, $\tau^{-1}(\tau(H))$ is generated by the subgroups $H$ and $\mu_{\dbar}$ of $\calP_n$, and these two subgroups have trivial intersection by \eqref{eq:cond-H-1}. Thus the kernel of $N(H)/H\to \tau(H)^\perp/\tau(H)$ is isomorphic to $\mu_{\dbar}$, which proves $(a)$.
	
	Let us now check $(c)$. Let $m$ be an element of $\tau(H)^\perp$. 
	Let $a$, $b$ be the positive integers such that $ab$ is the order of $m\in \tau(H)^\perp$ and $a$ is the order of the image $\bar m\in \tau(H)^\perp/\tau(H)$ of $m$. Let $g\in N(H)$ be an element of $\tau^{-1}(m)$ having order $ab$. 
	
	We have $\tau(g^a)\in \tau(H)$, which implies that $g^a\in \omega H$ for some $\omega\in \mu_{\dbar}$. On the other hand, we have $g^{ab}=1$, and $g^{ab}=(g^a)^b\in \omega^b H$. In view of \eqref{eq:cond-H-1}, we must have $\omega^b=1$, i.e., $\omega$ must be a $b$-th root of unity. Then we can write $\omega=\xi^{c\frac{d}{b}}$ for some $c\in\bbZ$. On the other hand, $ab$ is a divisor of $d$. Consider $g'=\xi^{-c\frac{d}{ab}}g$. Then we have $g'^a=\xi^{-c\frac{d}{b}}g^a\in\omega^{-1}\omega H=H$. This shows that we have found a lift $\bar{g'}$ of $\bar m$ of the same order $a$.
\end{proof}

The following lemma was already proved in \cite[Theorem 1]{gheorghiu2014standard}. However, we give an alternative proof using our approach to Heisenberg groups.
\begin{lem}
	\label{lem:dim-stab-gen}
	Let $H\subset \calP_n$ be a commutative subgroup satisfying \eqref{eq:cond-H-1}. Then we have $\dim V^H=\frac{\dim V}{\# H}$.
\end{lem}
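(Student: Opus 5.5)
The plan is to decompose $V$ into common eigenspaces of $H$, exactly as in \Cref{subs:eigen-free}, and to prove transitivity of the $\calP_n$-action on characters directly, without the freeness assumption. Then, as in \Cref{rk:trans=dim}, all eigenspaces have the same dimension, and the formula follows by counting.

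\emph{Eigenspace decomposition.} By \Cref{rk:norm=centr}, $H\simeq\tau(H)$, so every element of $H$ has order dividing $d$. Since $H$ is a finite abelian group of commuting unitary operators, we get $V=\bigoplus_{\chi\in H^*}V_\chi$. Here $H^*=\Hom(H,\bbC^*)=\Hom(H,\mu_d)$, and we identify this with $\Hom(\tau(H),\Zd)$ by writing characters as $h\mapsto\xi^{\lambda(\tau(h))}$. In particular $\#H^*=\#H$.

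\emph{Action of $\calP_n$ on characters.} For $x\in\calP_n$ and $h\in H$, \Cref{lem:furmula-commut} gives $h x=\xi^{\phi(\tau(h),\tau(x))}xh$. Hence $x(V_\chi)\subset V_{\chi'}$ with
\[
\chi'(h)=\chi(h)\,\xi^{\phi(\tau(h),\tau(x))}.
\]
So $\calP_n$ permutes the eigenspaces, and $x$ maps $V_\chi$ isomorphically onto $V_{\chi'}$ because $x$ is invertible. Therefore all $V_\chi$ in one orbit have equal dimension.

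\emph{Transitivity, the main step.} It suffices to show that every $\lambda\in\Hom(\tau(H),\Zd)$ has the form $m\mapsto\phi(m,\tau(x))$ for some $x\in\calP_n$. By \Cref{lem:star-surj}, the restriction map $\sM^*\to\tau(H)^*$ is surjective, so $\lambda$ extends to a linear form on $\sM$. Because $\phi$ is symplectic on $\sM$, the map $\sM\to\sM^*$, $y\mapsto\phi(\bullet,y)$, is bijective (up to the sign $\phi(y,\bullet)=-\phi(\bullet,y)$). So the extension equals $\phi(\bullet,y)$ for some $y\in\sM$. Since $\tau$ is surjective, we can take $x\in\tau^{-1}(y)$. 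Multiplying $\chi$ by such characters then reaches any character from the trivial one, which gives transitivity.

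\emph{Conclusion.} All $V_\chi$ have the same dimension, so $\dim V=\#H^*\cdot\dim V_{\mathbb 1}$. Since $V^H=V_{\mathbb 1}$ and $\#H^*=\#H$, we obtain $\dim V^H=\frac{\dim V}{\#H}$.

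I expect the only delicate point to be the identification of $\bbC^*$-valued characters of $H$ with $\Zd$-valued forms on $\tau(H)$, together with tracking the sign conventions. Condition \eqref{eq:cond-H-1} is what makes this identification work, since it gives $H\simeq\tau(H)$, a group of exponent dividing $d$. An alternative route would use \Cref{thm:isom-NH/H-Heis-gen} and \Cref{lem:dim-unique-rep} to compute $\dim V^H$ from the cardinality of the Heisenberg group, but the direct argument above is shorter.
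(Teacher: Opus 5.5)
Your proposal is correct and follows essentially the same route as the paper: decompose $V$ into the common eigenspaces $V_\chi$, let $\calP_n$ act on $H^*\simeq\tau(H)^*$ through the commutation formula, and deduce transitivity from \Cref{lem:star-surj} applied to $\tau(H)\subset\sM$ together with the nondegeneracy of $\phi$. All eigenspaces then have equal dimension. Your version only makes explicit a few points the paper leaves implicit: the lift of $y$ through the surjective map $\tau$, the sign convention, and $\#H^*=\#H$.
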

\begin{proof}
	
	Since we have an isomorphism of groups $\mu_d\simeq \Zd$, the group $H^*=\Hom(H,\Zd)$ can be identified with the group $\Hom(H,\bbC^*)=\Hom(H,\mu_d)$ of characters of $H$. Let $V=\bigoplus_{\chi\in H^*}V_\chi$ be the decomposition into common eigenspaces of $H$. Then $V^H$ is $V_\chi$ for the trivial character $\chi$.

	The group $\calP_n$ acts on the set $H^*\simeq \tau(H)^*$ by $(h\cdot \chi)(m)=\chi(m)-\phi(\tau(h),m)$, where $h\in \calP_n$, $\chi\in \tau(H)^*$, $m\in \tau(H)$. For this action we have $h(V_\chi)=V_{h\cdot \chi}$. In particular, $h$ induces a linear isomorphism between $V_\chi$ and $V_{h\cdot\chi}$.

	We see that for two characters $\chi_1$ and $\chi_2$ in the same orbit we have a linear isomorphism $V_{\chi_1}\simeq V_{\chi_2}$. So, to prove the dimension formula, it is enough to show that this action is transitive, since we then get $\dim V^H=\frac{\dim V}{\# H^*}$.
	
	Let us prove this transitivity. For this, we need to show that each character is in the orbit of the trivial character, which is represented by the zero map in $\Hom(\tau(H),\Zd)$. So, we have to show that for each $\chi\in \tau(H)^*$ we can find $h\in \calP_n$ such that $\chi=\phi(\tau(h),\bullet)$. This follows from \Cref{lem:star-surj} applied to $M=\sM$ and $N=\tau(H)$.
\end{proof}

Since by \Cref{coro:N-perp-perp} we have $(\tau(H)^{\perp})^\perp=\tau(H)$, the $\Zd$-module $\tau(H)^{\perp}/\tau(H)$ with the induced alternating form is symplectic. Then by \Cref{prop:strucute-sympl-Heis}, the group $Heis(\tau(H)^{\perp}/\tau(H))$ is isomorphic to a group of the form  
$$
\calP_1^{(d_1,\zeta)}\times_\zeta \calP_1^{(d_2,\zeta)}\times_\zeta \ldots\times_\zeta\calP_1^{(d_k,\zeta)},
$$
see also \Cref{rk:order-d-Chinese}.
\begin{thm}
	\label{thm:main-gen+rep}
	There is a unitary isomorphism 
	$$
	V^H\simeq \bbC^{d_1}\otimes \bbC^{d_2}\otimes \ldots \otimes\bbC^{d_k}
	$$
	such that the isomorphism of groups 
	$$
	N(H)/H\simeq Heis(\tau(H)^\perp/\tau(H))\simeq \calP_1^{(d_1,\zeta)}\times_\zeta \calP_1^{(d_2,\zeta)}\times_\zeta \ldots\times_\zeta\calP_1^{(d_k,\zeta)}
	$$ 
	in \Cref{thm:isom-NH/H-Heis-gen} intertwines the action of $N(H)/H$ on $V^H$ with the action of 
	$$
	\calP_1^{(d_1,\zeta)}\times_\zeta \calP_1^{(d_2,\zeta)}\times_\zeta \ldots\times_\zeta\calP_1^{(d_k,\zeta)} \quad\mbox{on}\quad \bbC^{d_1}\otimes \bbC^{d_2}\otimes \ldots \otimes\bbC^{d_k}.
	$$
\end{thm}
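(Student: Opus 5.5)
The plan is to show that $V^H$ is an irreducible $\zeta$-representation of $N(H)/H$. Since $Heis(\tau(H)^\perp/\tau(H))$ is a simple $\zeta$-group (\Cref{lem:Heis-symple-simple}), the statement then follows by uniqueness. First, $N(H)$ preserves $V^H$: by \Cref{rk:norm=centr}, elements of $N(H)$ commute with $H$. The subgroup $H$ acts trivially on $V^H$, so $V^H$ is a representation of $N(H)/H$. In this representation $\zeta$ acts by the scalar $\zeta_\bbC$, so $V^H$ is a $\zeta$-representation of $N(H)/H\simeq Heis(\tau(H)^\perp/\tau(H))$. Here the isomorphism of \Cref{thm:isom-NH/H-Heis-gen} sends $\zeta$ to $\zeta$, because it is compatible with the kernels $\mu_\dbar$.

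By simplicity, $V^H$ is then isomorphic to $W^{\oplus m}$, where $W=\bbC^{d_1}\otimes\cdots\otimes\bbC^{d_k}$ is the unique simple $\zeta$-representation, and it remains to show $m=1$. I would do this by a dimension count.
\begin{itemize}
\item By \Cref{lem:dim-stab-gen}, $\dim V^H=d^n/\#H$.
\item By \Cref{lem:dim-unique-rep}, $(\dim W)^2=\#Heis(\tau(H)^\perp/\tau(H))/\dbar=\#(\tau(H)^\perp/\tau(H))=\#\tau(H)^\perp/\#H$.
\item By \Cref{lem:card-Nperp}, $\#\tau(H)^\perp=d^{2n}/\#H$, hence $(\dim W)^2=d^{2n}/(\#H)^2$ and $\dim W=d^n/\#H=\dim V^H$.
\end{itemize}
So $m=1$ and $V^H\simeq W$ as $\zeta$-representations. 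This step, comparing the dimensions, is the only real content, and it reduces to the cardinality lemmas already proved.

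For unitarity, I would argue as follows. $V^H$ inherits a hermitian product from $V$ that is invariant under $N(H)$, since Pauli operators are unitary. $W$ carries the standard tensor product scalar product, which is invariant under the product of Pauli groups. Choose any isomorphism of $\zeta$-representations $f\colon V^H\to W$. Pulling back the scalar product of $W$ along $f$ gives a second invariant form on the irreducible representation $V^H$. By Schur's lemma, the invariant hermitian form on an irreducible representation is unique up to a positive scalar, so rescaling $f$ makes it unitary.

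Finally, $f$ intertwines the action of $N(H)/H$ on $V^H$ with the action of $\calP_1^{(d_1,\zeta)}\times_\zeta\cdots\times_\zeta\calP_1^{(d_k,\zeta)}$ on $W$, transported through the fixed isomorphism of groups in \Cref{thm:isom-NH/H-Heis-gen} composed with \Cref{prop:strucute-sympl-Heis}. This holds by construction, since $f$ was chosen as a morphism of representations for that composite identification.
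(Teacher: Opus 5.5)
Your proof is correct and is essentially the paper's argument. It uses the same ingredients: simplicity of the $\zeta$-group $Heis(\tau(H)^\perp/\tau(H))$, the dimension equality $\dim V^H=d^n/\#H=\sqrt{\#(\tau(H)^\perp/\tau(H))}$ obtained from \Cref{lem:dim-stab-gen}, \Cref{lem:dim-unique-rep} and \Cref{lem:card-Nperp}, and unitarity from uniqueness of the invariant scalar product. You also make explicit two points the paper leaves implicit, namely that $V^H$ is a $\zeta$-representation and that $V^H\simeq W^{\oplus m}$ by complete reducibility. One small correction: the isomorphism fixes $\zeta$ because the one built in the proof of \Cref{prop-Heis-unique} sends $\zeta$ to $\zeta$ by construction, not merely because it is compatible with $\tau$.
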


\begin{proof}
	By \Cref{lem:Heis-symple-simple}, this group is simple as a $\zeta$-group and its unique irreducible $\zeta$-representation is $V_0=\bbC^{d_1}\otimes \bbC^{d_2}\otimes \ldots \otimes \bbC^{d_k}$. 
Then by \Cref{lem:dim-unique-rep}, we have 
	$$
	\dim V_0=\sqrt{\frac{\#Heis(\tau(H)^{\perp}/\tau(H))}{\dbar}}.
	$$
	
	On the other hand, by \Cref{lem:dim-stab-gen}, the dimension of the representation $V^H$ of $N(H)/H$ satisfies  
	$$
	\dim(V^H)=\frac{\dim V}{\#H}.
	$$

	To identify $V^H$ with $V_0=\bbC^{d_1}\otimes \bbC^{d_2}\otimes \ldots \otimes \bbC^{d_k}$ we only have to prove that they have the same dimensions.
	We have $\#(\tau(H)^\perp)=\frac{\#\sM}{\#H}$ and then 
	$$
	\#(\tau(H)^\perp/\tau(H))=\frac{\#\sM}{(\#H)^2}=\frac{d^{2n}}{(\#H)^2}.
	$$ 
	Then we get 
	$$
	\dim V_0=\sqrt{\frac{\#Heis(\tau(H)^{\perp}/\tau(H))}{\dbar}}=	\sqrt{\#(\tau(H)^\perp/\tau(H))}=\sqrt{\frac{d^{2n}}{(\#H)^2}}=\frac{d^n}{\# H}=\dim V^H.
	$$
	
	The isomorphism of representations $V^H\simeq V_0$ can be chosen unitary by the uniqueness up to scalar of the invariant scalar product.
\end{proof}

\subsection{Shifted free subgroups}
\label{subs:shift}

We have seen in \Cref{thm:main-free-dstab} that if an abelian subgroup $H\subset \calP_n$ satisfies \eqref{eq:cond-H-1} and \eqref{eq:cond-H-2}, then the action of $N(H)/H$ on $V^H$ may be identified with the action of $\calP_{n-k}$ on $(\bbC^d)^{\otimes (n-k)}$. We see in \Cref{thm:isom-NH/H-Heis-gen} and \Cref{thm:main-gen+rep} that if we remove the freeness assumption \eqref{eq:cond-H-2}, then some of the qudits may be reduced to qudits of smaller dimension. It would be useful to identify the optimal replacement of the freeness condition \eqref{eq:cond-H-2} such that the conclusion of \Cref{thm:main-free-dstab} is still true (i.e., such that no such reduction happens).

Let $H'\subset \calP_n$ be an abelian subgroup satisfying \eqref{eq:cond-H-1} and \eqref{eq:cond-H-2}. Then we can find a symplectic basis $(e_1,\ldots,e_n,f_1,\ldots,f_n)$ of $\sM$ such that $(e_1,\ldots,e_k)$ is a $\Zd$-basis of $\tau(H')$, where $k$ is such that $\#H'=d^k$. By \Cref{lem:lift-give-genrel}, this basis can be lifted to elements $Z'_1,\ldots,Z'_n, X'_1,\ldots,X'_n$ satisfying exactly the relations of the standard generators of the Pauli group in \Cref{coro:Pn-gen+rel} (in fact, we can send them to the standard generators by an automorphism, see the proof of \Cref{lem:Heis-Sp-surj}) and such that $H'=\langle Z'_1,Z'_2,\ldots, Z'_k \rangle$.

\begin{df}
	We say that a group $H$ is a \emph{shift} of $H'$ if there are some positive integers $(a_1,a_2,\ldots,a_k)$ dividing $d$ such that we have
	$$
	H=\langle {Z'}_1^{a_1},{Z'}_2^{a_2},\ldots, {Z'}_k^{a_k},{X'_1}^{d/a_1},\ldots,{X'_k}^{d/a_k} \rangle
	$$
	(for some choice of the symplectic basis $(e_1,\ldots,e_n,f_1,\ldots,f_n)$ and lifts $Z'_1,\ldots,Z'_n, X'_1,\ldots,X'_n$ as above).
	Let us call the groups $H$ of this form \emph{shifted free}.
\end{df}

\begin{rk}
	\label{rk:def-shifted-free}
	In view of \Cref{prop:class-Lagr} and \Cref{prop:classif-Lang}, the condition of being "shifted free" is nothing else than "$\tau(H)$ is a Lagrangian inside a free symplectic $\Zd$-submodule of $\sM$".
\end{rk}

\begin{thm}
	\label{thm:shifted-free}
	Let $H$ be an abelian subgroup of $\calP_n$ satisfying \eqref{eq:cond-H-1}. Then the group $N(H)/H$ is isomorphic to $\calP_{n-k}$ for some $k$ if and only if $H$ is shifted free. Moreover, in this case the action of $N(H)/H$ on $V^H$ can be identified with the action of $\calP_{n-k}$ on $(\bbC^d)^{\otimes (n-k)}$, where the isomorphism $V^H\simeq (\bbC^d)^{\otimes (n-k)}$ is unitary.
\end{thm}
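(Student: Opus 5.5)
Set $T=\tau(H)$ and $Q=T^\perp/T$, with the symplectic form induced by $\phi$. By \Cref{thm:isom-NH/H-Heis-gen} we have $N(H)/H\simeq Heis(Q)$, and by \Cref{lem:Heis=Pauli} we have $\calP_{n-k}\simeq Heis(S_d^{\oplus (n-k)})$. I would reduce the statement to the following module-theoretic criterion:
$$N(H)/H\simeq\calP_{n-k}\iff Q \text{ is free of rank } 2(n-k).$$
The forward direction holds because $Heis(Q)/Z\simeq Q$ when $Z$ is the center of $Heis(Q)$. Since $Q$ is symplectic, the center of $Heis(Q)$ is exactly the subgroup $\langle\zeta\rangle$: an element is central iff its image lies in $\Ker\phi=0$. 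Likewise, the center of $\calP_{n-k}$ is $\langle\zeta\rangle$. So a group isomorphism identifies the quotients by the centers, giving $Q\simeq\Zd^{2(n-k)}$ as abelian groups. The converse follows from \Cref{lem:free-sympl} together with \Cref{lem:Heis=Pauli}.

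Next I would show that $Q$ is free if and only if $T$ is Lagrangian in some free symplectic submodule; by \Cref{rk:def-shifted-free} the latter is equivalent to $H$ being shifted free.

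\emph{Shifted free implies $Q$ free.} Take the symplectic basis from the definition of a shift. Then $T=\langle a_ie_i,\tfrac{d}{a_i}f_i : i\le k\rangle$, which is Lagrangian in $M_1=\langle e_i,f_i: i\le k\rangle$. The orthogonal of $T$ splits as $T^\perp=(T^\perp\cap M_1)\oplus M_1^\perp$, and $T^\perp\cap M_1=T$ because $T$ is Lagrangian in $M_1$. Hence $Q\simeq M_1^\perp$, which is free of rank $2(n-k)$ by \Cref{coro:free-Smith}.

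\emph{$Q$ free implies shifted free.} This is the main obstacle. Choose a symplectic basis of $Q$ (\Cref{lem:free-sympl}) and lift it to elements $e'_j,f'_j\in T^\perp$. These satisfy the symplectic relations only modulo $T$, and the pairings $\phi(e'_i,f'_j)$ agree with $\delta_{ij}$ exactly, because $T\subset T^\perp$ pairs trivially with $T^\perp$. Therefore the lifts span a free symplectic submodule $M_2$ of rank $2(n-k)$: the Gram matrix is invertible, so the lifts are independent. By \Cref{rk:orth-sympl} we get $\sM=M_2\oplus M_2^\perp$, and $M_1:=M_2^\perp$ is free by \Cref{coro:free-Smith} and symplectic. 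Since $T\perp M_2$, we have $T\subset M_1$. Cardinality then shows $T$ is Lagrangian in $M_1$: indeed $\#Q=d^{2n}/(\#T)^2=d^{2(n-k)}$ forces $\#T=d^{k}=\sqrt{\#M_1}$. Now \Cref{prop:classif-Lang} applied inside $M_1$, combined with \Cref{lem:isotr-in-Sd}, produces the required symplectic basis of the shape used in the definition of a shift. We then pick lifts $Z'_i,X'_i$ with the correct orders. The hypothesis \eqref{eq:cond-H-1} lets us adjust the scalars so that $H$ equals $\langle Z_i'^{a_i},X_i'^{d/a_i}\rangle$ exactly, by conjugating by suitable Pauli elements as in the proof of \Cref{prop:conj-Zs}. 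This scalar adjustment is the delicate part, and I would handle it by conjugating by Pauli operators. Since conjugation changes $H$ only by scalars, it preserves $N(H)$, and it transports $V^H$.

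Finally, the representation statement follows from \Cref{thm:main-gen+rep}, using all $d_r=d$ and $\bbC^{d}\otimes\cdots\otimes\bbC^d=(\bbC^d)^{\otimes(n-k)}$; the unitarity is already provided there.
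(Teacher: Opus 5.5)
Your proof is correct and follows essentially the same route as the paper. In the shifted case you compute $\tau(H)^\perp/\tau(H)$. Conversely, you lift a symplectic basis of $Q$ to a free symplectic $M_2\subset\tau(H)^\perp$, show that $\tau(H)$ is Lagrangian in $M_2^\perp$, and invoke the Lagrangian classification. Beyond the paper, your center argument justifies that an abstract isomorphism $N(H)/H\simeq\calP_{n-k}$ forces $Q\simeq\Zd^{2(n-k)}$, your cardinality count replaces its maximality argument, and your scalar adjustment of the lifts makes explicit what the paper folds into \Cref{rk:def-shifted-free}; all three are valid.
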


\begin{proof}
	Let us first show that we have $N(H)/H\simeq \calP_{n-k}$ if $H$ is shifted free. We have
	$$
    H=\langle {Z'}_1^{a_1},{Z'}_2^{a_2},\ldots, {Z'}_k^{a_k},{X'_1}^{d/a_1},\ldots,{X'_k}^{d/a_k} \rangle
    $$
	Then
	$$
	\tau(H)=\langle a_1e_1,\ldots,a_ke_k,\frac{d}{a_1}f_1,\ldots,\frac{d}{a_k}f_k \rangle
	$$
	and therefore
	$$
	\tau(H)^\perp=\langle a_1e_1,\ldots,a_ke_k,e_{k+1},\ldots,e_n,\frac{d}{a_1}f_1,\ldots,\frac{d}{a_k}f_k,f_{k+1},\ldots,f_n \rangle .
	$$
	We see that $\tau(H)^\perp/\tau(H)$ is a free symplectic $\Zd$-module of rank $2(n-k)$. Then, by \Cref{thm:isom-NH/H-Heis-gen} and \Cref{prop:strucute-sympl-Heis}, we have $N(H)/H\simeq \calP_{n-k}$.
	
	Now assume that $N(H)/H\simeq \calP_{n-k}$ and let us show that $H$ is shifted free. This isomorphism of groups implies that $\tau(H)^\perp/\tau(H)$ is a free $\Zd$-module of rank $2(n-k)$. Let $e_{k+1},\ldots, e_n,f_{k+1},\ldots,f_n\in \tau(H)^\perp$ be lifts of elements of a symplectic basis of $\tau(H)^\perp/\tau(H)$. Then $N:=\langle e_{k+1},\ldots, e_n,f_{k+1},\ldots,f_n\rangle$ is a free symplectic submodule of $\sM$ of rank $2(n-k)$. Then, by \Cref{rk:orth-sympl}, $N^\perp$ is also free symplectic of rank $2k$.

	Since we have $(\tau(H)^{\perp})^\perp=\tau(H)$, then $\tau(H)$ is the maximal submodule of $\tau(H)^\perp$ orthogonal to all elements of $\tau(H)^\perp$. Now, since we have $\tau(H)^\perp=\tau(H)\oplus N$, then $\tau(H)$ is a maximal isotropic submodule of $N^\perp$. Hence $\tau(H)$ is Lagrangian in $N^\perp$. Then $H$ is shifted free, see \Cref{rk:def-shifted-free}.

	The identification of representations follows automatically from the uniqueness of the irreducible $\zeta$-representation.
\end{proof}

\subsection{Twisted free subgroups}
\label{subs:twist}

Let us introduce a notion of a \emph{twist}, which generalises the notion of a shift. 
Roughly speaking, a twist is a shift with a defect: when we shift, we lose a part of what we shift.

\begin{df}
	Let $H'$ be as in \Cref{subs:shift}. We say that a subgroup $H\subset \calP_n$ is \emph{twisted free} if there exist positive integers
	$
	a_1,\ldots,a_k,b_1,\ldots,b_k
	$
	such that $d\mid a_r b_r$ for each $r\in[1;k]$, and such that
	$$
	H=\langle {Z'}_1^{a_1},{Z'}_2^{a_2},\ldots, {Z'}_k^{a_k},{X'_1}^{b_1},\ldots,{X'_k}^{b_k} \rangle
	$$
	for some choice of a symplectic basis
	$
	(e_1,\ldots,e_n,f_1,\ldots,f_n)
	$
	of $\sM$ and some choice of lifts
	$
	Z'_1,\ldots,Z'_n,\; X'_1,\ldots,X'_n
	$
	of this basis to $\calP_n$.
\end{df}

\begin{rk}
	\label{rk:def-twisted-free}
	Equivalently (in view of \Cref{lem:isotr-in-Sd}), a subgroup $H$ is twisted free if there exists an orthogonal decomposition
	$
	\sM \simeq S_d^{\oplus n}
	$
	into $n$ copies of the elementary free symplectic $\Zd$-module $S_d$, and for each $r\in[1;k]$ an isotropic submodule
	$
	N_r \subset S_d
	$
	such that
	\[
	\tau(H)=N_1\oplus N_2\oplus \cdots \oplus N_k.
	\]
	In the special case $a_r b_r=d$ for all $r$, each $N_r$ is Lagrangian in the corresponding copy of $S_d$, and we recover the notion of a shift.
\end{rk}

For each $r\in[1;k]$, set
\[
c_r=\frac{a_r b_r}{d}.
\]
In the shifted case one has $a_r b_r=d$, hence $c_r=1$. In the twisted case, however, the integers $c_r$ measure the defect.
More precisely, for the $r$-th block one has
$
N_r^\perp/N_r \simeq S_{c_r},
$
and therefore
$$
\tau(H)^\perp/\tau(H)\simeq
S_{c_1}\oplus S_{c_2}\oplus \cdots \oplus S_{c_k}\oplus S_d^{\oplus (n-k)}.
$$
Hence, we get
\[
N(H)/H \simeq
\calP_1^{(c_1,\zeta)}
\times_\zeta
\calP_1^{(c_2,\zeta)}
\times_\zeta
\cdots
\times_\zeta
\calP_1^{(c_k,\zeta)}
\times_\zeta
\calP_{n-k},
\]
and 
\[
V^H \simeq
\C^{c_1}\otimes \C^{c_2}\otimes \cdots \otimes \C^{c_k}\otimes (\C^d)^{\otimes (n-k)}.
\]

Thus, compared to the shifted case, twisted free subgroups produce additional
smaller qudits of dimensions $c_1,\ldots,c_k$ in the protected space.
When all $c_r=1$, these extra factors disappear and we recover the usual shifted free picture.

\begin{ex}
	Consider one qudit of dimension $8$ ($n=1, d=8$). Let $H$ be the subgroup of $\calP^{(8)}_1$ generated by $X^4$ and $Z^4$. This group is a twist of the subgroup generated by $Z$. Then we have $V^H\simeq \bbC^2$, and $N(H)/H$ is isomorphic to the one-qubit Pauli group with an additional $16$th root of unity. This example shows how one qubit can be encoded in a qudit of dimension $8$.
\end{ex}

\section{Twisted Kitaev model}
\label{sec:Kitaev-tw-sh}

In the Kitaev model (\Cref{subs:Kitaev-mod}), the subgroup $H\subset \calP_{\#E}$ was constructed as the group generated by the operators of the form $A_s$, $s\in S$, and $B_f$, $f\in F$ (but we can remove one $A_{s_0}$ and one $B_{f_0}$ from the set of generators). On the other hand, the notions of shifts and twists were introduced in \Cref{subs:shift} and \Cref{subs:twist}. We would like to look at some examples of what these notions give for the Kitaev model.

\subsection{Full shift}

	Fix $s_0,s_1\in S$ and fix a path $t$ from $s_0$ to $s_1$. Consider the following modification of the group $H$. 
	Let us denote by $H$ the group generated by $A_s$, $s\in S\backslash\{s_0,s_1\}$, by $B_f$, $f\in F$, and (to compensate for the removed $A_{s_1}$ and $A_{s_0}$) by the path operator $S^Z(t)$.  
	
	The combinatorial description of the new characters $\chi\in H^*$ is as follows. We still have notions of electric charges on all vertices (except for $s_0$ and $s_1$), and we still have the notion of magnetic charges at faces (with the total charge equal to zero modulo $d$). Additionally, we now have a notion of the current on $t\colon s_0\to s_1$, which takes values in $\Zd$. In other words, we have created the notion of a current from $s_0$ to $s_1$, but because of this we lost the notions of charges at $s_0$ and $s_1$ themselves.
	
	We could naturally generalise this construction by removing $A_{s_0}, A_{s_1}, \ldots, A_{s_u}$ and by adding path operators $S^Z(t_1), S^Z(t_2), \ldots, S^Z(t_u)$ for some $t_1\colon s_0\to s_1$, $t_2\colon s_0\to s_2$, $\ldots$, $t_u\colon s_0\to s_u$.
	Then we lose the notions of charges at $s_0,s_1,\ldots,s_u$, but we obtain the notions of currents between these chosen vertices.
	
	Since we are in the "shifted free" situation, the new protected space $V^H$ still has the form $(\bbC^d)^{\otimes {2g}}$.

\subsection{Partial shift}

	We can upgrade the example above. Fix a decomposition $d=ab$ with positive integers $a$ and $b$, and fix vertices $s_0,s_1$, and a path $t\colon s_0\to s_1$. Then we consider the following $H$. We remove $A_{s_0}$ and $A_{s_1}$, but we add $A_{s_1}^a$ (we can also add $A_{s_0}^a$, this changes nothing because of $\prod_{s\in S}A_s=I$), and we add the power $S^Z(t)^b$ of the path operator. Now, in the combinatorial description of the eigenspaces, we have the current between $s_0$ and $s_1$ (taking values in $b\bbZ/d\bbZ\subset \bbZ/d\bbZ=\Zd$), and the old charges at $s_0$ and $s_1$ modulo $d$ are not defined anymore, but there is still a notion of charges at $s_0$ and $s_1$ modulo $b$. This looks natural in the sense that since the current is divisible by $b$, the charges are still defined at least modulo $b$. 
	
	We could naturally generalise this construction. Fix $s_0,s_1,\ldots,s_u$ and $t_1,t_2,\ldots,t_u$ as in the example above, and fix $d=a_1b_1=a_2b_2=\ldots=a_ub_u$. Let us remove $A_{s_0}, A_{s_1}, \ldots, A_{s_u}$, but add $A_{s_1}^{a_1}, \ldots, A_{s_u}^{a_u}$ and path operators $S^Z(t_1)^{b_1}, S^Z(t_2)^{b_2}, \ldots, S^Z(t_u)^{b_u}$ for some $t_1\colon s_0\to s_1$, $t_2\colon s_0\to s_2$, $\ldots$, $t_u\colon s_0\to s_u$.
	Then we get a notion, for $r\in[1;u]$, of a current between $s_0$ and $s_r$ taking values in $b_r\bbZ/d\bbZ$, and the charge at $s_r$ is then defined only modulo $b_r$.
	
	Since we are in the "shifted free" situation, the new protected space $V^H$ still has the form $(\bbC^d)^{\otimes {2g}}$.

\subsection{Twisted case}

	We can now consider a situation similar to the previous example, but we do not assume $d=a_rb_r$ anymore, only $d\mid a_rb_r$. Set $c_r=\frac{a_rb_r}{d}$. Now, the current from $s_0$ to $s_r$ takes values in $b_r\bbZ/d\bbZ$, and the charge at $s_r$ is defined modulo $\frac{b_r}{c_r}=\frac{d}{a_r}$. Since we are now in the twisted case and not in the shifted case, the new protected space is $V^H\simeq (\bbC^d)^{\otimes 2g}\otimes \bbC^{c_1}\otimes\ldots\otimes\bbC^{c_u}$.

\bibliographystyle{abbrvurl}
\bibliography{dstab}

@article{hostens2005stabilizer,
  title={Stabilizer states and {C}lifford operations for systems of arbitrary dimensions and modular arithmetic},
  author={Hostens, Erik and Dehaene, Jeroen and De Moor, Bart},
  journal={Physical Review A-Atomic, Molecular, and Optical Physics},
  volume={71},
  number={4},
  pages={042315},
  year={2005},
  publisher={APS}
}

@article{albouy2008matrix,
  title={Matrix reduction and {L}agrangian submodules},
  author={Albouy, Olivier},
  journal={arXiv preprint arXiv:0809.1059},
  year={2008}
}

@article{gheorghiu2014standard,
  title={Standard form of qudit stabilizer groups},
  author={Gheorghiu, Vlad},
  journal={Physics Letters A},
  volume={378},
  number={5-6},
  pages={505--509},
  year={2014},
  publisher={Elsevier}
}

@article{freedman2001projective,
  title={Projective plane and planar quantum codes},
  author={Freedman, Michael H and Meyer, David A},
  journal={Foundations of Computational Mathematics},
  volume={1},
  number={3},
  pages={325--332},
  year={2001},
  publisher={Springer}
}

@book{gottesman1997stabilizer,
  title={Stabilizer codes and quantum error correction},
  author={Gottesman, Daniel},
  year={1997},
  publisher={California Institute of Technology}
}

@article{kitaev2003fault,
  title={Fault-tolerant quantum computation by anyons},
  author={Kitaev, A Yu},
  journal={Annals of physics},
  volume={303},
  number={1},
  pages={2--30},
  year={2003},
  publisher={Elsevier}
}

@article{maksimau2025spin,
  title={What is spin? {I}ntroduction to qubits},
  author={Maksimau, Ruslan},
  year={2025},
url={https://hal.science/hal-05557990v1/document}
}

\address{R. M.: Laboratoire Analyse, G\'eom\'etrie et Mod\'elisation, CY Cergy Paris Universit\'e, 95302  Cergy-Pontoise (France),}
\email{ruslmax@gmail.com, ruslan.maksimau@cyu.fr}

\end{document}